\numberwithin{equation}{section}
\newcommand*{\LatexDef}{.}
\newlength{\enummargin}
\setlist[enumerate]{align=left,font=\upshape,leftmargin=*, widest=i]}
\DeclareSymbolFont{defaultmathcal}{OMS}{zplm}{m}{n}
\DeclareSymbolFontAlphabet{\mathcal}{defaultmathcal}
\DeclareSymbolFont{handwritten}{OMS}{rsfs}{m}{n}
\DeclareSymbolFontAlphabet{\handcal}{handwritten}
\definecolor{darkgreen}{RGB}{54,124,50}
\newcommand{\straighttext}[1]{\text{\textnormal{#1}}}
\NewDocumentCommand{\RN}{m}
{
	\textup{ \int_to_Roman:n { #1 } }
}
\NewDocumentCommand{\rn}{m}
{
	\textup{ \int_to_roman:n { #1 } }
}
\newcommand{\E}{\mathbb{E}}
\newcommand{\F}{\mathbb{F}}
\newcommand{\N}{\mathbb{N}}
\newcommand{\R}{\mathbb{R}}
\newcommand{\Z}{\mathbb{Z}}
\newcommand{\w}{\infty}
\newcommand{\1}{\mathds{1}} %Already defined in the package xy
\newcommand{\ga}{\gamma}
\newcommand\de{\delta}
\newcommand{\e}{\varepsilon}
\newcommand\la{\lambda}
\newcommand\om{\omega}
\newcommand\si{\sigma}
\newcommand{\Ga}{\Gamma}
\newcommand{\De}{\Delta}
\renewcommand{\phi}{\varphi}
\DeclareRobustCommand{\rchi}{{\mathpalette\irchi\relax}}
\newcommand{\irchi}[2]{\raisebox{\depth}{$#1\cchi$}}
\let\cchi\chi
\let\chi\rchi
\newcommand{\AC}{\mathcal{A}} %Already defined in the package wasysym
\newcommand{\BC}{\mathcal{B}}
\newcommand{\DC}{\mathcal{D}}
\newcommand{\IC}{\mathcal{I}}
\newcommand{\MC}{\mathcal{M}}
\newcommand{\PC}{\mathcal{P}}
\newcommand{\QC}{\mathcal{Q}}
\newcommand{\SC}{\mathcal{S}}
\newcommand{\GF}{\mathfrak{G}}
\newcommand{\MF}{\mathfrak{M}}
\newcommand{\dist}{\operatorname{dist}}
\newcommand{\dom}{\operatorname{dom}}
\newcommand{\Graph}{\operatorname{Graph}}
\newcommand{\Id}{\operatorname{Id}}
\newcommand{\Pow}{\handcal{P}}
\newcommand{\proj}{\operatorname{proj}}
\newcommand{\del}{\partial}
\def\moverlay{\mathpalette\mov@rlay}
\def\mov@rlay#1#2{\leavevmode\vtop{%
		\baselineskip\z@skip \lineskiplimit-\maxdimen
		\ialign{\hfil$\m@th#1##$\hfil\cr#2\crcr}}}
\newcommand{\charfusion}[3][\mathord]{
	#1{\ifx#1\mathop\vphantom{#2}\fi
		\mathpalette\mov@rlay{#2\cr#3}
	}
	\ifx#1\mathop\expandafter\displaylimits\fi}
\renewcommand{\le}{\leqslant}
\renewcommand{\ge}{\geqslant}
\newcommand{\symdiff}{\mathrel{\triangle}}
\newcommand{\actson}{\curvearrowright}
\newcommand{\onto}{\twoheadrightarrow}
\newcommand{\imp}{\Rightarrow}
\newcommand{\pmi}{\Leftarrow}
\newcommand{\shortiff}{\Leftrightarrow}
\newcommand{\lefttoright}{\noindent $\mathbf{\imp:}$\xspace}
\newcommand{\righttoleft}{\noindent $\mathbf{\pmi:}$\xspace}
\newcommand*{\defeq}{\mathrel{\vcenter{\baselineskip0.5ex \lineskiplimit0pt \hbox{\scriptsize.}\hbox{\scriptsize.}}}=}
\newcommand*{\defequiv}{\mathrel{\vcenter{\baselineskip0.5ex \lineskiplimit0pt
			\hbox{\scriptsize.}\hbox{\scriptsize.}}}\shortiff}
\newcommand{\bigincU}{\charfusion[\mathop]{\bigcup}{\uparrow}}
\DeclareMathSymbol{\lqm}{\mathord}{operators}{``}
\DeclareMathSymbol{\rqm}{\mathord}{operators}{`'}
\newcommand{\Abert}{Ab\'{e}rt\xspace}
\newcommand{\Folner}{F{\o}lner\xspace}
\newcommand{\set}[1]{\left\{ #1 \right\}}
\newcommand{\rest}[1]{\mathord{|_{#1}}}
\newcommand{\eqcomment}[1]{\Big[\text{#1}\Big] \hspace{6pt}}
\newcommand{\onum}[2][th]{$#2^\text{#1}$}
\theoremstyle{plain}
\newtheorem{theorem}[equation]{Theorem}
\newtheorem*{theorem*}{Theorem}
\newcommand{\namedthmlabelref}[1]{\ref{#1}\xspace}
\def\@empty{}
\def\ifemptycredit#1{%
	\def\tmp{#1}%
	\ifx\tmp\@empty%
	\else%
	{~(#1)}%
	\fi%
}
\newenvironment{namedthm}[2][]{
	\refstepcounter{equation}
	\medskip
	\par\noindent \textbf{#2~\theequation}\ifemptycredit{#1}\textbf{.}\itshape\xspace
}{}
\newenvironment{namedthm*}[2][]{
    \medskip
	\par\noindent \textbf{#2}\ifemptycredit{#1}\textbf{.}\itshape\xspace
}{}
\crefname{prop}{Proposition}{Propositions}
\newtheorem{prop}[equation]{Proposition}
\newtheorem*{propo*}{Proposition}
\crefname{property}{Property}{Properties}
\newtheorem*{property*}{Property}
\newtheorem{lemma}[equation]{Lemma}
\newtheorem*{lemma*}{Lemma}
\crefname{claimlemma}{Claim}{Claims}
\newtheorem{claimlemma}[equation]{Claim}
\crefname{cor}{Corollary}{Corollaries}
\newtheorem{cor}[equation]{Corollary}
\newtheorem*{cor*}{Corollary}
\crefname{obs}{Observation}{Observations}
\newtheorem{obs}[equation]{Observation}
\newtheorem*{obs*}{Observation}
\crefname{obss}{Observations}{Observations}
\newtheorem{obss*}{Observations}
\crefname{fact}{Fact}{Facts}
\newtheorem*{fact*}{Fact}
\theoremstyle{definition}
\crefname{defn}{Definition}{Definitions}
\newtheorem{defn}[equation]{Definition}
\newtheorem*{defn*}{Definition}
\newenvironment{defn**}[1][]{\par\medskip\noindent \textbf{Definition\xspace#1.}\xspace}{}
\crefname{question}{Question}{Questions}
\newtheorem*{question*}{Question}
\crefname{conj}{Conjecture}{Conjectures}
\newtheorem*{conj*}{Conjecture}
\crefname{example}{Example}{Examples}
\newtheorem{example}[equation]{Example}
\newtheorem*{example*}{Example}
\crefname{examples.plain}{Examples}{Examples}
\newtheorem{examples.plain}[equation]{Examples}
\newtheorem*{examples.plain*}{Examples}
\theoremstyle{remark}
\crefname{remark}{Remark}{Remarks}
\newtheorem{remark}[equation]{Remark}
\newtheorem*{remark*}{Remark}
\newenvironment{remarklike*}[2][]{\par\medskip\noindent \textit{#2}#1\textbf{.}\rmfamily\xspace}{\smallskip}
\crefname{claim+}{Claim}{Claims}
\newtheorem{claim+}[equation]{Claim}
\crefname{claim}{Claim}{Claims}
\newtheorem*{claim*}{Claim}
\crefname{subclaim}{Subclaim}{Subclaims}
\newtheorem*{subclaim*}{Subclaim}
\newenvironment{case*}[1]{\smallskip\par\noindent \textit{Case}:~#1.\rmfamily}{}
\crefname{notation}{Notation}{Notations}
\newtheorem*{notation*}{Notation}
\newtheorem*{terminology*}{Terminology}
\crefname{convention}{Convention}{Conventions}
\newtheorem*{convention*}{Convention}
\newtheorem*{conventions*}{Conventions}
\crefname{spec}{Speculation}{Speculations}
\newtheorem*{spec*}{Speculation}
\crefname{caution}{Caution}{Cautions}
\newtheorem*{caution*}{Caution}
\crefname{hypothesis}{Hypothesis}{Hypotheses}
\newtheorem{hypothesis}[equation]{Hypothesis}
\newtheorem*{hypothesis*}{Hypothesis}
\crefname{assumption}{Assumption}{Assumptions}
\newtheorem{assumption}[equation]{Assumption}
\newtheorem*{assumption*}{Assumption}
\newcommand{\fntsz}[1][11]{\fontsize{#1}{#1}\selectfont}
\crefname{examples}{Examples}{Examples}
\newenvironment{examples*}[1][\alph*]
{
	\refstepcounter{equation}
	\medskip
	\noindent\textbf{Examples.}
	\medskip
	\begin{enumerate}[\bfseries(\theequation.#1),ref=(\theequation.#1),itemsep=5pt]
}
{
	\end{enumerate}
	\smallskip
}
\theoremstyle{remark}
\declaretheoremstyle[
spaceabove=\topsep, 
spacebelow=6pt,
headfont=\normalfont\itshape,
notefont=\normalfont, notebraces={(}{)},
bodyfont=\normalfont,
postheadspace=4pt,
qed=\mbox{\smaller[4]$\boxtimes$}
]{claimproofstyle}
\declaretheorem[name={Proof of Claim}, style=claimproofstyle, unnumbered]{pf}
\crefname{subsection}{Subsection}{Subsections}
\theoremstyle{plain}
\newmdenv[
leftmargin = 1cm,
rightmargin = 0pt,
skipabove = 8pt,
skipbelow = 3pt,
innerleftmargin = 8pt,
innertopmargin = 0pt,
innerbottommargin = 0pt,
innerrightmargin = 0pt,
linewidth = 3pt,
topline = false,
rightline = false,
bottomline = false
]{leftbar}
\definecolor{gris}{RGB}{90,90,90}
\definecolor{vert}{RGB}{7,126,26}
\definecolor{rougefonce}{RGB}{136,0,21}
\definecolor{purple}{RGB}{116,0,159}
\def\@settitle{\begin{center}%
		\baselineskip14\p@\relax
		\bfseries
		\uppercasenonmath\@title
		\@title
		\ifx\@subtitle\@empty\else
		\\[1ex]\uppercasenonmath\@subtitle
		\footnotesize\mdseries\@subtitle
		\fi
	\end{center}%
}
\def\subtitle#1{\gdef\@subtitle{#1}}
\def\@subtitle{}
\renewcommand{\theequation}{\thesection.\arabic{equation}}
\let\subsectionstar\subsection
\renewcommand\subsection{\@startsection{subsection}{2}%
	\z@{-1.5em}{.7em}%
	{\noindent\bfseries}}
\def\l@section{\@tocline{1}{5pt}{0pc}{}{}}
\renewcommand{\tocsection}[3]{%
	\indentlabel{\@ifnotempty{#2}{\makebox[20pt][l]{%
				\ignorespaces#1 #2.\hfill}}}\sc #3\dotfill}
\newdimen{\tocsubsecmarg}
\def\l@subsection{\@tocline{2}{3pt}{0pc}{\tocsubsecmarg}{}}
\renewcommand{\tocsubsection}[3]{%
	\indentlabel{\@ifnotempty{#2}{\makebox[30pt][l]{%
				\ignorespaces#1 #2.\hfill}}}#3\dotfill}
\let\oldtocsubsection=\tocsubsection
\renewcommand{\tocsubsection}[2]{\hspace{3em} \oldtocsubsection{#1}{#2}}
\setlist[itemize]{itemsep=5pt,leftmargin=1\parindent}
\renewcommand{\phi}{\varphi}
\newcommand{\coc}{{\mathfrak{w}}}
\newcommand{\mean}[2]{A^{#1}_{#2}}
\newcommand{\meanr}[1]{\mean{\coc}{#1}}
\newcommand{\meanrf}[1]{\meanr{#1}f}
\newcommand{\meanrFf}{\meanrf{F}}
\newcommand{\meanf}[1]{\mean{}{#1} f}
\newcommand{\Linf}[1]{\|#1\|_{_\w}}
\newcommand{\Linfbig}[1]{\big\|#1\big\|_{_\w}}
\newcommand{\Lone}[1]{\|#1\|_{_1}}
\newcommand{\Lp}[2][p]{\|#2\|_{_{#1}}}
\newcommand{\infl}[1][\phi]{\mathord{\mathrm{in}^\coc #1}}
\newcommand{\outfl}[1][\phi]{\mathord{\mathrm{out}^\coc #1}}
\newcommand{\Finw}[1]{[#1]^{<\w}}
\newcommand{\FinX}[1][X]{\Finw{#1}}
\newcommand{\FinE}[1][X]{\Finw{#1}_E}
\newcommand{\FinrE}[1][X]{[#1]_E^{\coc < \w}}
\newcommand{\FinG}[1][X]{\Finw{#1}_G}
\newcommand{\FinrG}[1][X]{[#1]_G^{\coc < \w}}
\newcommand{\FinGmod}[2][X]{[\Xmod[#1]{#2}]_{\Gmod{#2}}^{< \w}}
\newcommand{\FinrGmod}[2][X]{[\Xmod[#1]{#2}]_{\Gmod{#2}}^{\rmod{#2} < \w}}
\newcommand{\submod}[2]{#1_{\mathord{/} #2}}
\newcommand{\Xmod}[2][X]{\submod{#1}{#2}}
\newcommand{\Gmod}[2][G]{\submod{#1}{#2}}
\newcommand{\Emod}[2][E]{\submod{#1}{#2}}
\newcommand{\mumod}[2][\mu]{\submod{#1}{#2}}
\newcommand{\rmod}[2][\coc]{\submod{#1}{#2}}
\newcommand{\SCmod}[2][\SC]{\submod{#1}{#2}}
\newcommand{\p}{\alpha}
\newcommand{\Minr}[1][\coc]{\mathrm{Min}_{#1}}
\newcommand{\minr}[1][\coc]{\mathrm{min}_{#1}}
\newcommand{\Maxr}[1][\coc]{\mathrm{Max}_{#1}}
\newcommand{\maxr}[1][\coc]{\mathrm{max}_{#1}}
\newcommand{\MaxrF}[1][F]{\mathrm{Max}_{\rmod{#1}}}
\newcommand{\Cone}[1][G]{C_\coc^G}
\newcommand{\rCone}[1][G]{c_\coc^G}
\newcommand{\lift}[1]{{\hat{#1}}}
\newcommand{\bdrout}[1]{\overline{\del}_{#1}}
\newcommand{\bdrin}[1]{\underline{\del}_{#1}}
\newcommand{\cocmax}{\coc_\ast}
\newcommand{\cocmaxF}[1][F]{(\rmod{#1})_\ast}
\newcommand{\cocratio}[1][$\coc$]{#1-ratio\xspace}
\newcommand{\Mean}[2]{\AC^{#1}_{#2}}
\newcommand{\Meanrf}[1]{\Mean{\coc}{#1} f}
\newcommand{\MeanrfGrest}[1]{\Meanrf{G \rest{#1}}}
\newcommand{\MeanGf}{\Mean{}{G} f}
\newcommand{\Meanf}[1]{\Mean{}{#1} f}
\newcommand{\MeanrGf}{\Meanrf{G}}
\newcommand{\EdgsBtw}[2]{[#2]_{#1}}
\newcommand{\DirEdgsBtw}[2]{(#2)_{#1}}
\newcommand{\fvp}[1][\mu]{\mathrm{fvp}_{#1}}
\newcommand{\fep}[1]{\mathrm{fep}_{#1}}
\newcommand{\hvp}[1][\mu]{\mathrm{hvp}_{#1}}
\newcommand{\hep}[1]{\mathrm{hep}_{#1}}
\newcommand{\vdom}{\mathrm{vdom}}
\newcommand{\Sources}{\mathrm{Sources}}
\newcommand{\Sinks}{\mathrm{Sinks}}
\newcommand{\phib}{\overline{\phi}}
\newcommand{\vis}{\preccurlyeq}
\newcommand{\viscocG}{\vis^\coc_G}
\newcommand{\sees}{\succcurlyeq}
\newcommand{\dcone}[1]{\mathord{(\vis)^{#1}}}
\newcommand{\ucone}[1]{\mathord{(\vis)_{#1}}}
\newcommand{\dconecocG}[1]{\mathord{(\viscocG)^{#1}}}
\newcommand{\cocmore}{>^\ast_\coc}
\newcommand{\cocless}{<^\ast_\coc}
\title[]{Pointwise ergodic theorem for locally countable quasi-pmp graphs}%\\\vspace{1em}{\footnotesize\normalfont{(\MakeLowercase{to appear in} J.~M\MakeLowercase{od}.~D\MakeLowercase{yn}.)}}}
\subjclass[2010]{37A30, 03E15, 05C63, 37A20, 37A25}
\author[]{Anush Tserunyan}
\address[Anush Tserunyan]{Mathematics and Statistics Department, McGill University, Montr\'eal, QC, Canada}
\email{anush.tserunyan@mcgill.ca}
\thanks{The author's research was partially supported by NSF Grant DMS-1501036, NSF grant DMS-1855648, and NSERC Discovery Grant RGPIN-2020-07120.}
\date{\today}
\begin{document}

\maketitle

\begin{abstract}
	We prove a pointwise ergodic theorem for quasi-probability-measure-preserving (quasi-pmp) locally countable measurable graphs, equivalently, Schreier graphs of quasi-pmp actions of countable groups. For ergodic graphs, the theorem gives an increasing sequence of Borel subgraphs with finite connected components over which the averages of any $L^1$ function converges to its expectation. This implies that every (not necessarily pmp) locally countable ergodic Borel graph on a standard probability space contains an ergodic hyperfinite subgraph. A consequence of this is that every ergodic treeable equivalence relation has an ergodic hyperfinite free factor.
	
	The pmp case of the main theorem was first proven by R. Tucker-Drob using a deep result from probability theory. Our proof is different: it is self-contained and applies more generally to quasi-pmp graphs. Among other things, it involves introducing a graph invariant concerning asymptotic averages of functions and a method of tiling a large part of the space with finite sets with prescribed properties. The non-pmp setting additionally exploits a new quasi-order called visibility to analyze the interplay between the Radon--Nikodym cocycle and the graph structure, providing a sufficient condition for hyperfiniteness.
\end{abstract}

\tableofcontents

\section{Introduction}

\subsectionstar*{Main results and applications}
We consider measurable actions $\Gamma \actson^a (X,\mu)$ of a countable group $\Gamma$ on a standard probability space\footnote{A standard Borel space $X$ (i.e. the $\sigma$-algebra of $X$ is the Borel $\sigma$-algebra of some Polish topology on $X$) equipped with a Borel probability measure.} $(X,\mu)$. To avoid pathologies coming from null sets interfering with the dynamics, we assume that the actions are \textit{quasi-pmp}\footnote{An action $\Ga \actson (X,\mu)$ of a countable group on a standard probability space is \emph{probability-measure-preserving} (\textit{pmp}) if for each $\ga \in \Ga$, $\ga_\ast \mu = \mu$.} (aka nonsingular or null-preserving), i.e. each group element maps null sets to null sets. Intuitively, this means that points in the same orbit have possibly different relative weights, and this is captured precisely by the \textit{Radon--Nikodym cocycle} of the orbit equivalence relation $E_a$ with respect to $\mu$, namely, a Borel function $\coc : E_a \to \R^+$, $(x,y) \mapsto \coc_x(y)$, such that
\begin{enumerate}[(i),leftmargin=1.1cm, labelwidth=16pt,itemsep=4pt]
    \item $\coc$ is a cocycle, i.e. $\coc_x(y) \coc_y(z) = \coc_x(z)$ for all $E_a$-related $x,y,z \in X$ (so indeed, one can think of $\coc_x(y)$ as the weight of $y$ divided by that of $x$);
    
    \item $\mu(\gamma A) = \int_A \coc_x(\gamma x) d\mu(x)$ for all $\gamma \in \Gamma$ and measurable $A \subseteq X$.
\end{enumerate}
In particular, $\mu$ is invariant under the action if and only if $\coc \equiv 1$. The existence and a.e. uniqueness of this cocycle is proven in \cite{Kechris-Miller}*{Section 8}.

Seeking to prove a pointwise ergodic theorem in this generality, for all groups and quasi-pmp actions at once, we have to modify the classical form of pointwise ergodic theorems. Indeed, although the natural analogue of the classical pointwise ergodic theorem holds for quasi-pmp actions of $\Z^d$ \cites{Dowker:nonsingular-ergodic-N,Feldman:ratio_ergodic}, it fails for some quasi-pmp action of $\bigoplus_{n \in \N} \Z$ along any sequence of finite subsets of $\bigoplus_{n \in \N} \Z$ \cite{Hochman:ratio_ergodic}*{Theorems 1.2 and 1.3}. Even for pmp actions, although it holds for all amenable groups along tempered \Folner sequences \cite{Lindenstrauss:ptwise_amenable}, it fails for the free group $\F_2$ on $2$ generators along spheres or balls \cite{Tao:failure-ergodic}. Of course, there are other versions of the pointwise ergodic theorem for pmp actions (e.g. \cites{Bufetov:balls,Nevo-Fujiwara,Ts-Zomback:backward-ergodic}), but they too are specific to the group, while we would like a theorem for all groups and actions at once. So what we do is abandon the group and the action, and look at the induced Schreier graph instead (defined below).

By a \textit{graph} on $X$, we mean a set of edges between the points in $X$, i.e. a symmetric subset of $X^2$; in particular, a \textit{locally countable Borel graph} $G$ on $X$ is a symmetric Borel subset of $X^2$ whose each fiber is countable (i.e. each vertex has countably-many neighbors). We denote by $E_G$ the $G$-connectedness equivalence relation, so for $x \in X$, $[x]_{E_G}$ is the $G$-connected component of $x$. When $X$ is equipped with a Borel probability measure $\mu$, we say that $G$ is \textit{quasi-pmp} if each Borel bijection $\gamma : X \to X$ with $\Graph(\gamma) \subseteq E_G$ maps $\mu$-null sets to $\mu$-null sets.

Going back to the action $\Gamma \actson^a (X,\mu)$, for a symmetric generating set $S$ of $\Gamma$, the \textit{Schreier graph} $G_S \subseteq X^2$ of this action with respect to $S$ is defined by 
\[
x G_S y \defequiv \si x = y \text{ for some } \si \in S.
\]
Assuming without loss of generality that the action $a$ is Borel (measurable transformations are Borel mod null), $G_S$ is a locally countable Borel graph. The Feldman--Moore theorem \cite{Feldman-Moore} implies that in fact \textit{every} locally countable Borel graph $G$ on a standard Borel space $X$ arises in this fashion. Furthermore, the Schreier graph $G_S$ is quasi-pmp if and only if the action $a$ is quasi-pmp. Thus, although our main result (\cref{ptwise_ergodic_for_graphs}) is stated for an arbitrary locally countable quasi-pmp Borel graph $G$, one can safely think of $G$ as a Schreier graph of some quasi-pmp action of a countable group.

\begin{theorem}[Ergodic theorem for quasi-pmp graphs]\label{ptwise_ergodic_for_graphs}
	Let $G$ be a locally countable quasi-pmp Borel graph on a standard probability space $(X,\mu)$ and let $\coc : E_G \to \R^+$ be the Radon--Nikodym cocycle of $E_G$ with respect to $\mu$. There is an increasing sequence $(G_n)$ of component-finite Borel subgraphs of $G$ (typically not adding up to $G$) such that for any $p \ge 1$ and $f \in L^p(X,\mu)$,
	\[
	\lim_{n \to \w} \frac{1}{\coc_x([x]_{E_{G_n}})} \sum_{y \in [x]_{E_{G_n}}} f(y) \coc_x(y) = \E(f | \BC_{E_G})(x) \hspace{8pt}\text{a.e. and in $L^p$},
	\]
	where $\coc_x([x]_{E_{G_n}}) \defeq \sum_{y \in [x]_{E_{G_n}}} \coc_x(y)$ and $\E(f | \BC_{E_G})$ is the conditional expectation of $f$ with respect to the $\sigma$-algebra $\BC_{E_G}$ of $E_G$-invariant Borel sets.
\end{theorem}

This generalizes to quasi-pmp graphs the unpublished theorem of R. Tucker-Drob for pmp graphs, proven by completely different techniques. Indeed, while our proof is descriptive\allowbreak-set\allowbreak-theoretic and self\allowbreak-contained, Tucker-Drob's proof is based on a deep result in probability theory: the indistinguishability of trees in the Wired Uniform Spanning Subforest \cite{Hutchcroft-Nachmias:indistinguishability}*{Theorem 1.1}; it also uses Wilson's algorithm rooted at infinity as in \cite{Gaboriau-Lyons}*{Proposition 9}, as well as an analogue for graphs of the \Abert--Weiss theorem \cite{Abert-Weiss}*{Theorem 1} derived by Tucker-Drob from \cite{Hatami-Lovasz-Szegedy}*{Lemmas 7.9 and 7.10}). The mentioned results are all for the pmp case, so generalizing Tucker-Drob's proof to the quasi-pmp setting would require generalizing these results as well.

\begin{remark}\label{remark:ergodic_for_graphs}
Points to note about \cref{ptwise_ergodic_for_graphs}:
    \begin{enumerate}[(a)]
        \item\label{item:easy_for_eq-rel} When $G$ is an equivalence relation, i.e. the whole orbit equivalence relation of a group action, the conclusion of \cref{ptwise_ergodic_for_graphs} has been known and is not very hard to prove. Indeed, the pmp case is explicitly stated and proven in \cite{Kechris:global}*{Theorem 3.5}, and the general quasi-pmp case can be easily extracted from earlier works, namely, by putting together \cite{Schmidt:Cocycles on ergodic transformation groups}*{Theorem 8.22} and the Hurewicz ergodic theorem. 
        
        From the perspective of measured group theory, the particular interest in proving this theorem for graphs, equivalently, Schreier graphs of group actions, is to at least have some involvement of the group itself: indeed, our sampling windows (the $G_n$-connected components) correspond to finite connected subsets of the Cayley graph of the group. From the descriptive-set-theoretic view point, when $G$ is ergodic, the increasing union $\bigincU_n G_n$ is an ergodic hyperfinite subgraph (see \cref{ergodic_hyperfin_subgraph} below). The existence of such a subgraph is a powerful tool, which has been sought after because of its immediate applications, e.g. \cref{free_factor} below.
        
        \item\label{item:easy_for_hyperfin} When $G$ is $\mu$-hyperfinite\footnote{$G$ is \textit{hyperfinite} if it is an increasing union of component-finite Borel graphs. \textit{$\mu$-hyperfinite} just means hyperfinite off of a $\mu$-null set.} (equivalently, $E_G$ is $\mu$-amenable, by the Connes--Feldman--Weiss theorem \cite{Connes-Feldman-Weiss}), the result is again not very hard. Indeed, discarding a null set, $G$ is an increasing union of component-finite Borel graphs $G_n$ and the conclusion of \cref{ptwise_ergodic_for_graphs} follows from a (much simpler) pointwise ergodic theorem for hyperfinite equivalence relations. Versions of the pmp case of this theorem have appeared in the literature, for example, in \cite{Bowen-Nevo:amenable_eq_rel_ergodic_gp_actions} and in \cite{Miller-Ts:erg_hyp_dec}*{Theorem 7.3}, and we state it below as \cref{hyperfin_averages} for the general quasi-pmp case.
    \end{enumerate}
    
    \noindent Thus, \cref{ptwise_ergodic_for_graphs} is most valuable for graphs (as opposed to equivalence relations), which are not $\mu$-hyperfinite/$\mu$-amenable.
\end{remark}

\cref{ptwise_ergodic_for_graphs} immediately implies what was the author's main goal:

\begin{theorem}[Ergodic hyperfinite subgraph]\label{ergodic_hyperfin_subgraph}
	Every ergodic locally countable Borel graph $G$ on a standard probability space $(X,\mu)$ admits an ergodic hyperfinite Borel subgraph $H \subseteq G$.
\end{theorem}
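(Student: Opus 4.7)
The plan is to derive this theorem as a direct consequence of \cref{ptwise_ergodic_for_graphs:sequence}, into which all the genuine work has been packed. First I apply that theorem to $G$ to obtain an increasing sequence $(F_n)$ of $G$-connected finite Borel equivalence relations such that, for every $f \in L^1(X,\mu)$, the $\rho$-weighted averages $\meanfr[F_n](x)$ converge to $\int f\,d\mu$ for $\mu$-a.e.\ $x$. I then set $F \defeq \bigcup_n F_n$, which is a countable Borel equivalence relation on $X$, and take $H \defeq G \cap F$ as a Borel subgraph of $G$. This $H$ will be the desired ergodic hyperfinite subgraph.

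The key observation for hyperfiniteness is that the bolded $G$-connectedness clause for the $F_n$ is exactly the assertion that $G \cap F_n$ is a graphing of $F_n$: on the one hand, any two $F$-related points lie in a common $F_n$-class for some $n$ and hence are connected by a $G$-path inside that class, which is a path in $G \cap F_n \subseteq H$; on the other hand, every $H$-path stays inside a single $F$-class, since $H \subseteq F$ and $F$ is transitive. Consequently $E_H = F = \bigcup_n F_n$ is an increasing union of finite Borel equivalence relations, which is the definition of hyperfiniteness. I emphasize that without $G$-connectedness the equivalence relation $F$ would still be hyperfinite, but it need not be realizable as the connectedness relation of any subgraph of $G$ at all.

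For ergodicity of $H$, equivalently of $E_H = F$, I take an arbitrary $F$-invariant Borel set $A \subseteq X$ and observe that $A$ is also $F_n$-invariant for every $n$, so $\mathbf{1}_A$ is constant on every $F_n$-class and hence $\meanfr[F_n](x) = \mathbf{1}_A(x)$ at every $x \in X$. Applying \cref{ptwise_ergodic_for_graphs:sequence} with $f \defeq \mathbf{1}_A \in L^1(X,\mu)$, this constant value converges $\mu$-a.e.\ to $\mu(A)$, forcing $\mathbf{1}_A \equiv \mu(A)$ a.e.\ and thus $\mu(A) \in \{0,1\}$. No substantive obstacle arises along the way; the entire difficulty of producing an ergodic hyperfinite subgraph is exactly the difficulty of finding the $G$-connected tilings inherent in \cref{ptwise_ergodic_for_graphs:sequence}, and conversely the converse implication (which I do not need here) follows from the Hurewicz ergodic theorem applied to the hyperfinite quasi-pmp equivalence relation $E_H$.
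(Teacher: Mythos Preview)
Your derivation is essentially the paper's own: it too obtains \cref{ergodic_hyperfin_subgraph} from \cref{ptwise_ergodic_for_graphs:sequence} by taking $H \defeq G \cap \bigcup_n F_n$ and reading off $E_H = \bigcup_n F_n$ from the $G$-connectedness of the $F_n$. There is one small omission: \cref{ptwise_ergodic_for_graphs:sequence} assumes $\mu$ is $E_G$-quasi-invariant, while \cref{ergodic_hyperfin_subgraph} does not. The paper fills this by first replacing $\mu$ with the equivalent $E_G$-quasi-invariant measure $\sum_{n \ge 1} 2^{-n} \gamma_n \ast \mu$, where $(\gamma_n) \subseteq [E_G]$ with $E_G = \bigcup_n \operatorname{Graph}(\gamma_n)$; since this new measure has the same null sets as $\mu$, ergodicity of $G$ is preserved and ergodicity of $H$ transfers back. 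Your ergodicity check via $f = \mathbf{1}_A$ is a clean shortcut---the paper instead appeals to \cref{hyperfin_averages}, but your direct argument bypasses that reference.
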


This immediately implies a positive answer to a question of L. Bowen, asked for pmp equivalence relations:

\begin{cor}\label{free_factor}
	Every ergodic treeable\footnote{\label{ftnt:def_treeable+free-factor}See \cite{Kechris-Miller}*{Sections 19 and 27} for the definitions of \textit{treeable} and \textit{free factor}.} countable Borel equivalence relation admits an ergodic hyperfinite free factor\cref{ftnt:def_treeable+free-factor}.
\end{cor}

Again, \cref{ergodic_hyperfin_subgraph,free_factor} generalize the corresponding unpublished results of R. Tucker-Drob in the pmp setting.

For an ergodic pmp graph $G$, the existence of an ergodic hyperfinite subgraph (i.e. Tucker-Drob's theorem) was initially also used in \cite{Miller-Ts:erg_hyp_dec}, although later the authors included a self-contained proof of a weaker statement that was sufficient for their purpose.

Lastly, the fact that \cref{ptwise_ergodic_for_graphs} holds for quasi-pmp  (and not just pmp) graphs, implies a ratio ergodic theorem, whose more general version without the ergodicity assumption is given in \cref{ratio_ergodic_for_graphs}.

\begin{theorem}[Ratio ergodic theorem for quasi-mp graphs]\label{ratio_ergodic_for_ergodic_graphs}
Let $G$ be a locally countable quasi-mp\footnote{This is the same as quasi-pmp, but the measure is not assumed to be finite.} ergodic Borel graph on a $\sigma$-finite standard measure space $(X,\mu)$ and let $\coc : E_G \to \R^+$ be the Radon--Nikodym cocycle of $E_G$ with respect to $\mu$. There is an increasing sequence $(G_n)$ of component-finite Borel subgraphs of $G$ such that for any $f,g \in L^1(X,\mu)$ with $g > 0$,
	\[
	\lim_{n \to \w} \frac{\sum_{y \in [x]_{E_{G_n}}} f(y) \coc_x(y)}{\sum_{y \in [x]_{E_{G_n}}} g(y) \coc_x(y)} 
	= 
	\frac{\int_X f d\mu}{\int_X g d\mu} \hspace{8pt}\text{a.e.}
	\]
\end{theorem}

As mentioned above, \cref{ergodic_hyperfin_subgraph,ratio_ergodic_for_ergodic_graphs} are derived from \cref{ptwise_ergodic_for_graphs}, which itself is derived from the following lemma by a diagonalization argument. (See \cref{sec:equiv_of_thms} for proofs of all these implications.)

\begin{namedthm}{Main Lemma}\label{core_lemma}
Let $G$ be a locally countable $\mu$-nowhere hyperfinite\footnote{\label{ftnt:nowhere-hyperfinite}This means that there is no $E_G$-invariant Borel set of positive $\mu$-measure on which $G$ is hyperfinite. Equivalently \cite{JKL}*{1.3(vi)}, there is no set of positive measure on which $E_G$ is hyperfinite.} quasi-pmp Borel graph on a standard probability space $(X,\mu)$ and let $\coc : E_G \to \R^+$ be the Radon--Nikodym cocycle of $E_G$ with respect to $\mu$. For any $f \in L^\infty(X,\mu)$ and $\e > 0$, there is a component-finite Borel subgraph $H \subseteq G$ such that for all $x$ in a set of measure $\ge 1-\e$, the average $\frac{1}{\coc_x([x]_{E_H})} \sum_{y \in [x]_{E_H}} f(y) \coc_x(y)$ differs from $\E(f | \BC_{E_G})(x)$ by at most $\e$.
\end{namedthm}

The proof of this is the main content of the paper and we give a sketch of it below.

\subsectionstar*{Auxiliary results}

Here, we highlight some tools we obtained to prove Main Lemma \namedthmlabelref{core_lemma} as they are interesting and may be useful elsewhere. Below, let $G$ be a locally countable Borel graph on a standard Borel space $X$.

\subsubsection*{Cuts and hyperfiniteness}

Call a set $V \subseteq X$ a \emph{hyperfinitizing vertex-cut for $G$} if $G \cap (X \setminus V)^2$ is hyperfinite. For a Borel probability measure $\mu$, put
	\[
	\hvp(G) 
	\defeq
	\inf \set{\mu(C) : C \subseteq X \text{ is a Borel hyperfinitizing vertex-cut for $G$}}
	\]
and call it the \emph{hyperfinitizing vertex-price} of $G$ (with respect to $\mu$). We also define the analogous notions for edge-cuts. This was already done in \cite{Miller-Ts:erg_hyp_dec}*{Section 9} as well as earlier in \cite{Elek:combinatorial_cost} in a slightly different context. The following is a useful and easily applicable way of exploiting the nonhyperfiniteness of a measurable graph and variations of it have appeared in the aforementioned two papers:

\begin{prop}\label{intro:char_of_hyperfin_via_price}
	A locally countable Borel graph $G$ is $\mu$-hyperfinite if and only if $\hvp(G) = 0$.
\end{prop}

When $G$ is locally finite, this proposition is merely an observation based on the Borel--Cantelli lemma. However, for locally countable graphs, the proof (still very easy) goes through the analogous statement for edge-cuts and this analogous statement immediately implies the Dye--Krieger theorem (\cref{Dye-Krieger}). See \cref{char_of_hyperfin_via_price} for the full version of \cref{intro:char_of_hyperfin_via_price}.

\cref{intro:char_of_hyperfin_via_price} is used to give a lower bound on the measure of a set based on the graph structure: for a $\mu$-nonhyperfinite graph $G$, if a set is a hyperfinitizing vertex-cut for $G$ then its measure is at least $\hvp(G) > 0$.

\subsubsection*{Approximately saturated and packed tilings}

Given a countable Borel equivalence relation $E$ on a standard Borel space $X$ and a Borel collection $\SC$ of finite $E$-related subsets\footnote{A set is called \textit{$E$-related} if it is contained in one $E$-class.}, one often needs a Borel tiling $\PC$ of a large part of $X$ with tiles from $\SC$. By \cite{Kechris-Miller}*{Lemma 7.3}, maximal such tilings exist, but for our purposes here and for those in \cite{Miller-Ts:erg_hyp_dec}, a stronger notion of maximality is needed: each tile in $\PC$ should be maximally big (i.e. it cannot be combined with some untiled points to form a tile from $\SC$) and tiles in $\PC$ cannot be combined together with proportionally-many untiled points to form a tile from $\SC$. A tiling $\PC$ with the first property is called \textit{saturated}, and with the second property, \textit{$p$-packed}, where $p \in \R^+$ is the proportion parameter.

It was proven in \cite{Miller-Ts:erg_hyp_dec}*{Subsection 4.D} that saturated and packed Borel tilings exist off of an $E$-compressible set and hence, off a null set for any $E$-invariant probability measure. Here, in \cref{sec:packing_and_saturation}, we generalize this to the quasi-pmp setting, i.e. in the presence of a Borel cocycle $\coc : E \to \R^+$. It is not true that saturated tilings exist off of a null set if the measure is not invariant: the fact that there are points of arbitrarily small relative $\coc$-weight is an issue. However, we define an approximate notion of saturation in \cref{defn:saturated} and prove existence modulo null in \cref{existence-of-saturated}. The proof of this theorem uses the existence of Borel label-maximizing maximal matchings in Borel bipartite graphs with edges labeled by positive reals, proven in \cref{maximal_labeled_matchings}.

Approximately saturated and packed (sequences of) tilings are related to and may be useful in the so-called \textit{toast} constructions (see, e.g., \cite{Grebik-Rozhon:local_prob_grids}).

\subsubsection*{Cocycle-visibility in graphs and hyperfiniteness}

Let $G$ be a locally countable Borel graph on $X$ and let $\coc : E_G \to \R^+$ be a Borel cocycle, i.e. $\coc_x(y) \coc_y(z) = \coc_x(z)$ for all $E_G$-related $x,y,z \in X$. A \emph{$(G,\coc)$-visible neighborhood} of $x \in X$ is any $G$-connected set $V \ni x$ such that $\coc_x(v) \le 1$ (the weight of $x$ is at least as much as that of $v$) for each $v \in V$. This induces a quasi-order on $X$: $x \sees y$ if $y$ is in a visible neighborhood of $x$. We say that $G$ has \textit{finite $\coc$-visibility} if for each $x \in X$, the downward cone $(\vis)^x \defeq \set{y \in X : y \vis x}$ is $\coc$-finite, i.e. $\sum_{y \vis x} \coc_x(y) < \infty$. This provides a sufficient condition for Borel hyperfiniteness:

\begin{theorem}\label{finite_visibility=>hyperfinite}
	If $G$ has finite $\coc$-visibility, then it is Borel hyperfinite.
\end{theorem}

We use this to argue that a set $D \subseteq X$ is large: if removing $D$ from a $\mu$-nowhere hyperfinite graph $G$ results in a graph with finite visibility, then $D$ is a hyperfinitizing vertex-cut, whence $\mu(D) \ge \hvp(G) > 0$.

\subsubsection*{\cocratio[Cocycle] and tiling with arbitrarily large sets}\label{subsubsec:intro:coc-ratio}

Let $E$ be a quasi-pmp Borel equivalence relation on a standard probability space $(X,\mu)$ and let $\coc : E \to \R^+$ be the Radon--Nikodym cocycle of $E$ with respect to $\mu$. We often need to find a $\mu$-nowhere smooth hyperfinite subequivalence relation $F \subseteq E$ (see \cref{subsec:eq-rel} for definitions) with some additional properties. For $F$ to be $\mu$-nowhere smooth, a.e. $F$-class $[x]_F$ has to be \textit{$\coc$-infinite}, i.e. $\sum_{y \in [x]_F} \coc_x(y) = \infty$. In the pmp case, i.e. when $\coc \equiv 1$, we obtain $F$ as an increasing union of finite Borel equivalence relations $F_n$. However, when $\coc$ is arbitrarily, it could be that although the cardinality of the $F_n$-classes grows with $n$, they union up to a $\coc$-finite set. Therefore, cardinality of the $F_n$-classes is not the right measurement to control.

The solution is to look at what we call the \textit{\cocratio} of an $E$-related finite set $U \subseteq X$, defined by $\cocmax(U) \defeq \min_{x \in U} \coc_x(U)$, where $\coc_x(U) \defeq \sum_{u \in U} \coc_x(u)$. Indeed, if the $U_n$ are increasing $E$-related finite sets with $\cocmax(U_n) \to \infty$, then $\bigincU_n U_n$ is $\coc$-infinite.

To build desired $F_n$, we need to tile most of the space $X$ with finite tiles of large \cocratio. This is surprisingly challenging because unlike cardinality, \cocratio is typically not monotone (under subsets). We build such tilings in \cref{tiling_with_large_visibility}.

\subsectionstar*{Sketch of proof of Main Lemma \namedthmlabelref{core_lemma}}\label{subsec:proof-sketch}

Let $G$, $(X,\mu)$, $\coc$, and $f$ be as in \namedthmlabelref{core_lemma}. To simplify notation, we assume that $G$ is ergodic.

\subsubsection*{The invariant case} 

Here, we sketch the proof for a pmp $G$, i.e. $\coc \equiv 1$. We begin by establishing a connection (\cref{local-global_bridge}) between the global average (integral) of $f$ and the local (finite) averages of $f$ around a point: for any finite Borel equivalence relation $F$ on $X$,
\begin{equation}\label{eq:intro:bridge}
\int_X f d\mu = \int_X \meanf{F} d \mu,
\end{equation}
where, for each $x \in X$, $\meanf{F}(x)$ is the average of $f$ over $[x]_F$.

Next, for each $x \in X$, we define the set $\MeanGf(x)$ of \emph{$G$-asymptotic averages at $x$}, namely, the set of all reals $r$ that can be approximated arbitrarily well by the averages of $f$ over finite $G$-connected sets $V \ni x$ of arbitrarily large cardinality. It is easy to see that the map $x \mapsto \MeanGf(x)$ is $E_G$-invariant, hence constant a.e. by ergodicity. Moreover, this set, denoted by $\MeanGf$, is a closed interval.

We then show that for each $\delta > 0$, one can construct a finite $G$-connected Borel equivalence relation $F_\delta$ such that for a.e. $x \in X$, the average of $f$ over $[x]_{F_\delta}$ is at most $\delta$-away from $\MeanGf$. This implies, via \labelcref{eq:intro:bridge}, that $\MeanGf$ contains the global average $\int_X f d\mu$. Thus, if the set $\MeanGf$ was just a small interval around $\int_X f d\mu$ of size less than $\frac{\e}{2}$, taking $F \defeq F_{\frac{\e}{2}}$ would satisfy the conclusion of Main Lemma \namedthmlabelref{core_lemma}.

Even if $\MeanGf$ is initially a large interval, maybe quotienting out by some finite $G$-connected Borel equivalence relation shrinks it, in which case we would also be done. Thus, we assume that there is a $\delta > 0$ such that the set $\Meanf{\Gmod{R}}$ of asymptotic averages in the quotient graph $\Gmod{R}$ is not contained in 
\[
\^{I_\delta} \defeq [\int_X f d\mu - \delta,\int_X f d\mu + \delta],
\]
for every finite $G$-connected Borel equivalence relation $R$. In fact, by an additional argument, we may assume that $\Meanf{\Gmod{R}}$ spills over both sides of $\^{I_\delta}$. This assumption allows us to tile a significant part of the space by $G$-connected finite tiles whose $f$-averages are in $\^{I_\delta}$ for arbitrarily small $\delta > 0$. It is here that packed tilings come into play: the packedness condition ensures that only finite $G$-connected components are left after removing the union $D$ of all the tiles from $X$. In other words, the domain $D$ of of each packed tiling is a finitizing vertex-cut. The $\mu$-nowhere hyperfiniteness of $G$ gives a lower bound $\lambda > 0$ for the measure of all finitizing vertex-cuts, hence domains of packed tilings. This allows us to eventually cover most of $X$ by an iterative coherent construction of saturated and packed tilings, whose tiles become larger and larger and more and more packed. After sufficiently many iterations, the resulting tiling is such that the induced subgraphs on each tile together form a subgraph $H \subseteq G$ satisfying the conclusion of Main Lemma \namedthmlabelref{core_lemma}.

\subsubsection*{The quasi-invariant case}

We only mention what changes one has to make in the above (pmp) argument to make it work in the quasi-pmp case.

To show that a set is null in the pmp setting, one often proves that it is compressible. The notion of compressibility was generalized to the quasi-invariant setting by Benjamin Miller in \cite{Miller:meas_with_cocycle_II}, and we present a rephrasing of this in \cref{sec:flows} in terms of mass transport.

As mentioned above, saturated tilings do not exist modulo null in the quasi-pmp setting, so we use approximately saturated tilings instead, proving their existence in \cref{existence-of-saturated}.

For points $x,y$ in the same $E_G$-class, the fact that $\coc_x(y)$ can be arbitrarily large destroys that convexity of the set $\MeanGf(x)$ of $G$-asymptotic averages, while our construction above crucially relies on this property. To fix this, we introduce the notion of $(G,\coc)$-visibility (developed in \cref{sec:cocycle_visibility}) and take asymptotic averages at a point $x$ only within the part of the graph that is visible to some point $y$ that sees $x$.

Another important difference is in controlling the $\coc$-weight of increasing unions $\bigincU_n U_n$ of finite sets. Instead of ensuring that $|U_n|\to \infty$, we have to ensure that $\cocmax(U_n) \to \infty$, which is much harder since $\cocmax$ is not monotone (under subsets).

\subsectionstar*{Organization}

\cref{sec:prelims} establishes notation and terminology that are globally used in the paper. In \cref{sec:hyperfin_averages}, we discuss finite and hyperfinite averages, in particular, stating the pointwise ergodic theorem for hyperfinite equivalence relations mentioned in \cref{remark:ergodic_for_graphs}\labelcref{item:easy_for_hyperfin}. \cref{sec:equiv_of_thms} provides proofs of \cref{ptwise_ergodic_for_graphs,ergodic_hyperfin_subgraph,ratio_ergodic_for_ergodic_graphs} assuming Main Lemma \namedthmlabelref{core_lemma}. In \cref{sec:flows}, we discuss mass transport along a cocycled equivalence relation, introduce the notion of deficiency for sets as a generalization of compressibility, and provide a lemma for building Borel transport functions. \cref{sec:finitizing_cuts} discusses cuts in a graph and their connection with hyperfiniteness. \cref{sec:packing_and_saturation} introduces saturated and packed tilings with respect to a cocycle and proves their existence. \cref{sec:cocycle_visibility} discusses the notion of cocycle-visibility in a graph, provides a sufficient condition for hyperfiniteness (\cref{finite_visibility=>hyperfinite}), and proves the lemma on tiling the space with sets of large \cocratio[cocycle] (\cref{tiling_with_large_visibility}). In \cref{sec:asymptotic_averages}, we introduce the set of asymptotic averages for a graph, whose role is instrumental for the proof of Main Lemma \namedthmlabelref{core_lemma}; we then establish an important tiling lemma for the set of asymptotic averages. Finally, \cref{sec:proof} is the proof of Main Lemma \namedthmlabelref{core_lemma}.

\subsectionstar*{Acknowledgments}

The author thanks

\begin{itemize}[-,itemsep=3pt]
    \item Benjamin Miller for getting her into this topic and way of thinking.
    
    \item Robin Tucker-Drob for sharing his proof of the pmp case of \cref{ergodic_hyperfin_subgraph} and for insightful conversations.
    
    \item The two anonymous referees for providing incredibly helpful and detailed reports, which, among other things, provided a more streamlined and insightful argument for \cref{finite_visibility=>hyperfinite} and pinpointed an error in the use of saturated tilings.
    
    \item Peter Burton for providing a number of references, suggesting that the reduction from $L^1$ to $L^\w$ should be explained, and asking about and verifying \cref{ratio_ergodic_for_ergodic_graphs}.
    
    \item Anton Bernshteyn for pointing out an error in the derivation of \cref{ptwise_ergodic_for_graphs} from Main Lemma \namedthmlabelref{core_lemma}.
    
    \item Ran Tao for noticing an oversight in the definition of packed tilings, and for suggesting a more straightforward proof of \cref{finite-flow-definer}.
    
    \item Lewis Bowen for useful remarks and references.
    
    \item Benjamin Weiss for pinpointing \cite{Schmidt:Cocycles on ergodic transformation groups}*{Theorem 8.22} and prompting \cref{remark:ergodic_for_graphs}\labelcref{item:easy_for_eq-rel}.
    
    \item Clinton Conley, Alexander Kechris, Andrew Marks, and Jenna Zomback for helpful conversations and suggestions.
\end{itemize}

\begin{comment}
\subsectionstar*{Acknowledgments}
First and foremost, the author thanks Benjamin Miller for getting her into this topic and way of thinking. Special thanks to Robin Tucker-Drob for sharing his proof of the pmp case of \cref{ergodic_hyperfin_subgraph}. The author is truly grateful to the two anonymous referees for providing incredibly helpful and detailed reports, which, among other things, provided a more streamlined and insightful argument for \cref{finite_visibility=>hyperfinite} and pinpointed an error in the use of saturated tilings. Many thanks to Peter Burton for providing a number of references, suggesting that the reduction from $L^1$ to $L^\w$ should be explained, and asking about, as well as verifying, the ratio version (\cref{ratio_ergodic_for_ergodic_graphs}). Also, thanks to Benjamin Weiss for pinpointing \cite{Schmidt:Cocycles on ergodic transformation groups}*{Theorem 8.22} and prompting \cref{remark:ergodic_for_graphs}\labelcref{item:easy_for_eq-rel}. Finally, the author thanks Anton Bernshteyn, Lewis Bowen, Clinton Conley, Alexander Kechris, Andrew Marks, Ran Tao, Robin Tucker-Drob, and Jenna Zomback for useful suggestions, corrections, and comments.
\end{comment}

\section{Preliminaries}\label{sec:prelims}

Our set $\N$ of natural numbers includes $0$. For reals $a,b \in \R$, we write $a \approx_\e b$ to mean that $|a - b| \le \e$. For $i=1,2$, $\proj_i : X_1 \times X_2 \to X_i$ is defined by $(x_1,x_2) \mapsto x_i$. For a set $X$, let $\Pow(X)$ denote the powerset of $X$, and let $\Id_X$ denote the identity (equality) equivalence relation on $X$.

Throughout, let $X$ be a standard Borel space. For a Borel measure $\mu$ on $X$ and $\e>0$, we call a set $X' \subseteq X$ \textit{$\mu$-co-$\e$} if it is measurable and $\mu(X \setminus X') \le \e$.

\subsection{Equivalence relations}\label{subsec:eq-rel}

Let $E$ denote a \textit{countable Borel} equivalence relation on $X$, that is: $E$ is a Borel subset of $X^2$ and each $E$-class is countable. We refer to \cite{JKL} and \cite{Kechris-Miller} for the general theory of countable equivalence relations. 

For $x \in X$ and $A \subseteq X$, we write $[x]_E$ to mean the $E$-class of $X$ and $[A]_E$ to mean the \textit{$E$-saturation} of $A$, i.e. $\bigcup_{x \in A} [x]_G$. We say that a set $A \subseteq X$ is \emph{$E$-related} if it is contained in a single $E$-class; similarly, we say that points $x_0,x_1,...,x_n \in X$ are \emph{$E$-related} if $\set{x_0,x_1,...,x_n}$ is $E$-related. We denote by $\FinE$ the standard Borel space of finite nonempty $E$-related sets.

Let $\mu$ be a Borel measure on $X$. We say that $E$ is
\begin{itemize}
    \item \textit{smooth} if for some/any Polish space $Y$, there is a Borel map $f : X \to Y$ such that for all $x,y \in X$, $x E y \iff f(x)=f(y)$. In fact, one can take $Y \defeq X$ and have in addition that $x E f(x)$, so $f$ is a \textit{Borel selector} for $E$.
    
    \item \textit{finite} if each $E$-class is finite. Note that any finite Borel equivalence relation is smooth because, by Luzin--Novikov uniformization \cite{bible}*{Theorem 18.10}, $x \mapsto \min_< [x]_E$ is a Borel selector for $E$, where $<$ is some a priori fixed Borel linear order on $X$.
    
    \item (\textit{Borel}) \textit{hyperfinite} if $E$ is an increasing union of finite Borel equivalence relations.
    
    \item \textit{$\mu$-hyperfinite} if it is hyperfinite on a conull set; this set can be chosen to be Borel and $E$-invariant, by \cite{JKL}*{1.3(vi)}.
    
    \item \textit{$\mu$-nowhere hyperfinite} if there is no set of positive measure on which $E$ is hyperfinite; again by \cite{JKL}*{1.3(vi)}, this is equivalent to the inexistence of an $E$-invariant Borel set of positive $\mu$-measure on which $E$ is hyperfinite.
    
    \item \textit{measure-preserving} (\emph{mp}) or that \emph{$\mu$ is $E$-invariant} if $\gamma_\ast\mu = \mu$ for every Borel \textit{automorphism} $\gamma$ of $E$ (i.e. a Borel bijection $X \to X$ mapping every point to an $E$-equivalent point).
    
    \item \emph{quasi-measure-preserving} (\emph{quasi-mp}) or that $\mu$ is \emph{$E$-quasi-invariant} if $\gamma_\ast\mu \sim \mu$ for every Borel automorphism $\gamma$ of $E$.
    
    \item \emph{pmp} (resp. \emph{quasi-pmp}) if it is mp (resp. quasi-mp) and $\mu$ is a probability measure.
\end{itemize}

\subsection{Cocycles}\label{subsec:cocycles}

For a countable Borel equivalence relation $E$ on $X$, a \emph{cocycle on $E$} is a map $\coc: E \to \R^+$, $(x,y) \mapsto \coc_x(y)$ satisfying the \emph{cocycle identity}: $\coc_x(y) \cdot \coc_y(z) = \coc_x(z)$ for all $E$-related $x,y,z \in X$. A Borel measure $\mu$ on $X$ is called \emph{$\coc$-invariant} if for every Borel set $B \subseteq X$ and a Borel automorphism $\gamma$ of $E$,
\[
\mu(\gamma B) = \int_B \coc_x(\gamma x) \, d\mu(x).
\]
By \cite{Kechris-Miller}*{Section 8}, every $E$-quasi-invariant probability measure $\mu$ is $\coc$-invariant for some Borel cocycle $\coc$ on $E$. Such a cocycle is clearly unique $\mu$-a.e., and it is called the \textit{Radon--Nikodym cocycle of $E$ with respect to $\mu$}. 

For a Borel cocycle $\coc : E \to \R^+$ and $x \in X$, the function $\coc_x : [x]_E \to \R^+$ defines a measure on $Y \defeq [x]_E$ by $\coc_x(A) \defeq \sum_{y \in A} \coc_x(y)$ for each $A \subseteq Y$. Thus, it makes sense to write $\int_Y f d \coc_x$ for any absolutely summable $f : Y \to \R$.

\subsubsection*{Independence of the base point}

As mentioned in the introduction, we think of $\coc_x(y)$ as the weight of $y$ divided by that of $x$; indeed, in expressions like $\frac{\coc_x(y)}{\coc_x(z)}$ and statements like ``$\lim_{n \to \infty} \coc_x(x_n) \to \infty$'' the particular choice of the base point $x$ does not matter due to the cocycle identity. We call such expressions and statements \textit{$\coc$-homogeneous} and omit writing the subscript $x$ from them. For example, for an $E$-class $C$, $A \subseteq C$, and $f : C \to \R$, we may write $\frac{2 \cdot \coc(x) + \int_C f d \coc}{\coc(A)}$ and ``$\coc(A)$ is finite''. We also say that \textit{all $\coc$-large enough} $U \subseteq C$ satisfy some property $\PC$ if for any/some base point $x \in C$, there is $L_x > 0$ such that all $U \subseteq C$ with $\coc_x(U) \ge L_x$ satisfy $\PC$. \textit{Arbitrarily $\coc$-large} is defined analogously.

\subsubsection*{Maximum and minimum}

For a subset $A$ of an $E$-class, put
\[
\Minr A \defeq \set{x \in A : \forall y \in A \; \coc(x) \le \coc(y)}.
\]
If $\Minr A \ne \0$, for any $x \in [A]_E$, we put $\minr[\coc_x] A \defeq \min_{y \in A} \coc_x(y)$, otherwise, $\minr[\coc_x] A \defeq 0$. We also analogously define $\Maxr A$ and $\maxr[\coc_x] A$, and we omit the base point $x$ in $\coc$-homogeneous expressions. 

\subsubsection*{The space $\FinrE$ of $\coc$-finite sets}

An $E$-related set $A$ is said to be \emph{$\coc$-finite} if $\coc(A)$ is finite, otherwise, it is \emph{$\coc$-infinite}. We denote by $\FinrE$ the collection of all nonempty $E$-related $\coc$-finite sets. We say that $E$ is \textit{$\coc$-finite}, if each $E$-class is $\coc$-finite; otherwise, we say that it is \textit{$\coc$-infinite} (aka \textit{$\coc$-aperiodic}).

We show that $\FinrE$ is also a standard Borel space, which can be naturally viewed as a Borel subset of $X^{< \N} \cup X^\N$. Throughout, we fix a Borel linear order $<_X$ on $X$ and define a partial order $<_\coc$ (linear on every $E$-class) as follows: for any $x,y \in X$
\[
x <_\coc y \defequiv x E y \text{ and } \big(\coc(x) < \coc(y) \text{ or } (\coc(x) = \coc(y) \text{ and } x <_X y)\big).
\]

\begin{obs}\label{Maxr_nonempty_finite}
	For each $A \in \FinrE$ and $a_0 \in A$, the set $\set{a \in A : a >_\coc a_0}$ is finite. In other words, the relation $>_\coc$ on $A \in \FinrE$ is a well-order of type $\le \om$. In particular, $\Maxr A$ is nonempty and finite.
\end{obs}

Thus, we identify $\FinrE$ with the set of all $<_\coc$-decreasing $E$-related sequences $(x_n)_{n < \ell} \subseteq X$, $\ell \le \omega$, that are \textit{$\coc$-summable}, i.e. $\sum_{n < \ell} \coc(x_n) < \infty$. This is clearly a Borel subset of $X^{< \N} \cup X^\N$. Furthermore, we view $\FinE$ as a Borel subset of $\FinrE$.

In our arguments below, we implicitly use the following.

\begin{obs}\label{membership_is_Borel}
    The set $\set{(x,U) \in X \times \FinrE : x \in U}$ is a Borel subset of $X \times \FinrE$.
\end{obs}

The partial order $>_\coc$ induces a (lexicographic) partial order $\cocmore$ on $\FinE$ as follows: for any $A,B \in \FinE$, $A \cocmore B$ if and only if $A \ne B$ yet $[A]_E = [B]_E$ and
\[
\textstyle |A| < |B| \text{ or } (|A|=|B| \text{ and } \max_{<_\coc}(A \symdiff B) \in A),
\]
where $\max_{<_\coc}(A \symdiff B)$ is the maximum element of $A \symdiff B$ with respect to $<_\coc$. For each $E$-class $C$, $\cocmore$ is a linear order on $\FinE[C]$. Moreover, because $>_\coc$ is a well-order on any $A \in \FinrE$, it is not hard to check that $\cocmore$ is also a well-order on $\FinE[A]$. Thus:

\begin{obs}\label{cocord_well-order}
    For every $\coc$-finite $E$-class $C$, $\cocmore$ is a well-order on $\FinE[C]$.
\end{obs}

\subsubsection*{\cocratio of $E$-related sets}\label{subsubsec:coc-ratio}

As mentioned in the introduction, in some constructions below we need to ensure that an increasing union of finite (or $\coc$-finite) sets is $\coc$-infinite, and we noted that just making the cardinality of the sets grow is not enough. However, replacing cardinality with the following works:

\begin{defn}\label{defn:coc-ratio}
	For each $U \in \FinrE$, \cref{Maxr_nonempty_finite} implies that
	\[
	\cocmax(U) \defeq \frac{\coc(U)}{\maxr U} = \min_{x \in U} \coc_x(U),
	\]
	is well-defined and we call it the \emph{\cocratio} of $U$.
\end{defn}

Although $U \subseteq V$ does not imply $\cocmax(U) \le \cocmax(V)$, we still get the desired property:

\begin{obs}\label{increasing_ratio=>infinite}
	For any $U,V \in \FinrE$, $U \subseteq V$ implies $\frac{\cocmax(V)}{\cocmax(U)} \le \frac{\coc(V)}{\coc(U)}$. In particular, for any increasing sequence $(U_n)$ of sets in $\FinrE$, $\cocmax(U_n) \to \w$ implies $\coc(U_n) \to \w$.
\end{obs}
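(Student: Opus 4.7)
The plan is to unwind the definition of $\rhomax$ and observe that $\rhomax(U)/\rhomax(V)$ differs from $\rho(U)/\rho(V)$ by exactly a factor comparing the $<_\rho$-maxima of $U$ and $V$, which behaves monotonically under set inclusion.

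Concretely, I would start by writing out
\[
\frac{\rhomax(V)}{\rhomax(U)} = \frac{\rho(V)}{\rho(U)} \cdot \frac{\max_{u\in U} \rho(u)}{\max_{v\in V} \rho(v)},
\]
where the ratios are all $\rho$-homogeneous, so they make sense as absolute numbers without choosing an origin. Since $U \subseteq V$, the set $\{\rho(u) : u \in U\}$ is a subset of $\{\rho(v) : v \in V\}$, so $\max_{u\in U}\rho(u) \le \max_{v\in V}\rho(v)$, making the second factor at most $1$. This yields the desired inequality $\rhomax(V)/\rhomax(U) \le \rho(V)/\rho(U)$.

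For the ``in particular'' clause, fix an increasing sequence $(U_n)$ in $\FinrE$ with $\rhomax(U_n) \to \infty$, and apply the just-proved inequality with $U \defeq U_0$ and $V \defeq U_n$ (valid for every $n$, since monotonicity of the sequence gives $U_0 \subseteq U_n$). Rearranging,
\[
\rho(U_n) \ge \rho(U_0) \cdot \frac{\rhomax(U_n)}{\rhomax(U_0)},
\]
and since $\rho(U_0)$ and $\rhomax(U_0)$ are positive finite constants (because $U_0 \in \FinrE$ is nonempty and $\rho$-finite), the right-hand side tends to $\infty$ with $n$.

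There is no real obstacle here: the only subtlety worth flagging is that each quantity appearing is $\rho$-homogeneous, so the manipulations are legitimately independent of the choice of origin on the $E$-class in question. Once that is noted, the argument is a one-line algebraic identity followed by the trivial monotonicity of $\max$ under set inclusion.
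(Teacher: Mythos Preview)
Your argument is correct and is exactly the intended one-line verification: the paper states this as an observation without proof, and your unwinding of the definition together with the monotonicity of $\max_{<_\rho}$ under inclusion is precisely why it holds.
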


When the increasing union is $\coc$-finite, $\cocmax$ is monotone:

\begin{lemma}\label{incU_finite=>increasing_ratio}
    For any increasing sequence $(U_n)$ of sets in $\FinrE$, if $U \defeq \bigincU_n U_n$ is $\coc$-finite, then $\Maxr U \subseteq U_n$ for all large enough $n$. In particular, the sequence $(\cocmax(U_n))$ is eventually increasing and $\lim_n \cocmax(U_n) = \cocmax(U)$.
\end{lemma}
\begin{proof}
 By \cref{Maxr_nonempty_finite}, $\Maxr U$ is finite, so there is $N$ such that $\Maxr U_N = \Maxr U$. Thus, for any $x \in \Maxr U$ and all $n \ge N$, 
\[
\cocmax(U_n) = \coc_x(U_n) \nearrow \coc_x(U) = \cocmax(U).
\qedhere
\]
\end{proof}

\subsection{Graphs}\label{subsec:graphs}

Let $\vec{G}$ be a locally countable directed Borel graph on $X$, i.e.~$\vec{G}$ is a Borel subset of $X^2$ such that the fibers $\vec{G}_x \defeq \set{y \in X : (x,y) \in \vec{G}}$ and $\vec{G}^x \defeq \set{y \in X : (y,x) \in \vec{G}}$ are countable for each $x \in X$. We drop ``directed'' if $\vec{G}$ is \textit{symmetric}, i.e. $\vec{G} = - \vec{G} \defeq \set{(y,x) : (x,y) \in \vec{G}}$. Let $G$ denote the \textit{symmetrization} of $\vec{G}$, i.e. $G \defeq \vec{G} \cup (-\vec{G})$. For sets $A,B \subseteq X$, the following is standard notation:

\begin{itemize}
    \item $\DirEdgsBtw{\vec{G}}{A,B} \defeq (A \times B) \cap \vec{G}$;
    
	\item $\EdgsBtw{\vec{G}}{A,B} \defeq (A \times B \cup B \times A) \cap \vec{G}$;
	
	\item $\vec{G} \rest{A} \defeq \vec{G} \cap A^2$;
	
	\item $\vec{G}_{-A} \defeq \vec{G} \rest{X \setminus A}$;

	\item $\bdrin{\vec{G}} A \defeq \bdrin{G} A \defeq \set{a \in A : \text{there is $b \in X \setminus A$ with $(a,b) \in G$}}$, the \emph{inner $\vec{G}$-boundary} of $A$;
	
	\item $\bdrout{\vec{G}} A \defeq \bdrout{G} A \defeq \set{b \in X \setminus A : \text{there is $a \in A$ with $(a,b) \in G$}}$, the \emph{outer $\vec{G}$-boundary} of $A$.
\end{itemize}

We denote by $E_G$ the equivalence relation of being in the same $G$-connected component, and put $E_{\vec{G}} \defeq E_G$. The rest of the terminology and most of the paper is about a symmetric graph $G$. Call a set $U \subseteq X$ \emph{$G$-connected} if $G \cap U^2$ is a connected graph on $U$, and let $\FinG$ denote the (Borel) subset of $\FinX_{E_G}$ of $G$-connected sets. Similarly, for a Borel cocycle $\coc : E_G \to \R^+$, let $\FinrG$ denote the corresponding subset of $[X]^{\coc < \w}_{E_G}$. Say that an equivalence relation $F$ on $X$ is \emph{$G$-connected} if each $F$-class is $G$-connected.

We say that $G$ is \emph{component-finite} if each $G$-connected component is finite. For a Borel measure $\mu$ on $X$, we say that $G$ is \emph{(quasi-)pmp, $\mu$-ergodic, Borel hyperfinite, $\mu$-hyperfinite}, etc. if $E_G$ is. In particular, if $G$ is hyperfinite, i.e. $E_G$ is a countable increasing union of finite Borel equivalence relations $E_n$, then $G$ is a countable increasing union of component-finite Borel graphs $G_n \defeq E_n \cap G$.

\subsection{Quotients}\label{subsec:quotients}

For a smooth countable Borel equivalence relation $F$ on $X$, the quotient space $X/F$ is also standard Borel and we denote it by $\Xmod{F}$. We emphasize that although $\Xmod{F}$ can be realized as a Borel subset of $X$, but we think of it as the set of $F$-classes.  We denote the quotient map by $\pi_F : X \onto \Xmod{F}$.

For an $F$-invariant function $f : X \to \R$, we define its quotient $\Xmod{F} \to \R$ by $[x]_F \mapsto f(x)$ and we still denote it by $f$ to not overload notation. For a Borel measure $\mu$ on $X$, we put $\mumod{F} \defeq (\pi_F)_\ast \mu$. For a countable Borel equivalence relation $E \supseteq F$, we denote by $\Emod{F}$ the pushforward of $E$ under $\pi_F$. For $\SC \subseteq \Pow(X)$, let
\[
\SCmod{F} \defeq \set{P \in \Pow(\Xmod{F}) : \pi_F^{-1}(P) \in \SC}.
\]

For a locally countable Borel graph $G$ on $X$ with $F \subseteq E_G$, we form the \emph{quotient graph} $\Gmod{F}$ by contracting the edges between $F$-related vertices, i.e. for $U,V \in \Xmod{F}$, 
\[
(U,V) \in \Gmod{F} \defequiv \EdgsBtw{G}{U,V} \ne \0. 
\]
In our arguments, we need $\pi_F^{-1}(A)$ to be $G$-connected for every $\Gmod{F}$-connected set $A \subseteq \Xmod{F}$; this happens exactly when $F$ is $G$-connected, so we only take quotients of $G$ by $G$-connected equivalence relations.

Moreover, we only take quotients by $\coc$-finite $F$ for a Borel cocycle $\coc : F \to \R^+$, which is allowed by the following:

\begin{lemma}\label{coc-finite_is_smooth}
For any countable Borel equivalence relation $F$ and Borel cocycle $\coc : F \to \R^+$, if $F$ is $\coc$-finite then it is smooth.
\end{lemma}
\begin{proof}
Follows from \cref{Maxr_nonempty_finite} because the map $x \mapsto $ the $<_\coc$-largest element of $[x]_F$ is a Borel selector for $F$.
\end{proof}

Lastly, let $E$ be a quasi-pmp equivalence relation on a standard probability space $(X,\mu)$ and let $\coc : E \to \R^+$ be the Radon--Nikodym cocycle of $E$ with respect to $\mu$.

\begin{obs}\label{coc-quotient}
For any $\coc$-finite Borel subequivalence relation $F \subseteq E$, $\mumod{F}$ is $\Emod{F}$-quasi-invariant, and the Radon--Nikodym cocycle $\rmod{F}$ of $\Emod{F}$ with respect to $\mumod{F}$ is given by
\[
(\rmod{F})_{[x]_F} ([y]_F) \defeq \coc([y]_F) / \coc([x]_F)
\]
for any $(x,y) \in E$. In particular, for any $U \in [\Xmod{F}]^{\rmod{F} < \infty}_{\Emod{F}}$, $\cocmaxF(U) \le \cocmax(\pi_F^{-1}(U))$.
\end{obs}

%To simplify notation, we write $\cocmaxF(P)$ instead of $\cocmaxF(\Xmod[P]{F})$ for every $F$-invariant $P \in \FinrE$.

\section{Finite and hyperfinite averages}\label{sec:hyperfin_averages}

This subsection is the generalization of \cite{Miller-Ts:erg_hyp_dec}*{Subsections 7.A--B} to the quasi-invariant setting. We first state a basic fact about finite averages, which we use repeatedly below. For an abstract set $Y$, treating $Y^2$ as the trivial equivalence relation on $Y$, we let $\coc : Y^2 \to \R^+$ be a cocycle, and for a bounded function $f : Y \to \R$ and a $\coc$-finite nonempty set $U \subseteq Y$, we define the \textit{$\coc$-weighted average of $f$ on $U$} by
\[
\meanrf{U} \defeq \frac{\sum_{x \in U} f(x) \coc(x)}{\coc(U)}.
\]

\begin{lemma}\label{convexity_of_average}
For $Y, \coc$ and $f$ as above, and for nonempty disjoint $\coc$-finite sets $U,V \subseteq Y$,

\begin{enumerate}[(a)]
	\item \label{item:convexity_of_average:disjU_of_averages}
	$\meanrf{U \cup V} = \frac{\coc(U)}{\coc(U) + \coc(V)} \meanrf{U} + \frac{\coc(V)}{\coc(U) + \coc(V)} \meanrf{V}$. 
	
	\item \label{item:convexity_of_average:increment_bound}
	$|\meanrf{U \cup V} - \meanrf{U}| 
	\le 
	2 \, \Linf{f} \frac{\coc(V)}{\coc(U) + \coc(V)}
	\le 
	2 \, \Linf{f} \frac{\coc(V)}{\coc(U)}$.
\end{enumerate}
\end{lemma}
\begin{proof}
One verifies \labelcref{item:convexity_of_average:disjU_of_averages} directly, and \labelcref{item:convexity_of_average:increment_bound} follows from \labelcref{item:convexity_of_average:disjU_of_averages} by the triangle inequality.
\end{proof}

Now let $(X,\mu)$ be a standard probability space. For finite and hyperfinite equivalence relations on $X$, we now give explicit formulas for the conditional expectation of any $f \in L^1(X,\mu)$ with respect to the $\sigma$-algebras of invariant sets.

\smallskip

The following simple statement is what connects local finite averages to the global average (i.e. the integral). The author has been calling statements of this type \textit{local-global bridges} and has been using them in all her proofs of pointwise ergodic theorems as they reduce proving these theorems to solving local combinatorial problems.
 
\begin{lemma}[Local-global bridge]\label{local-global_bridge}
Let $F$ be a quasi-pmp equivalence relation on $(X,\mu)$ and let $\coc : F \to \R^+$ be its Radon--Nikodym cocycle with respect to $\mu$. If $F$ is $\coc$-finite, then for any $f \in L^1(X,\mu)$, the function 
\[
\meanrFf : X \to \R \;\text{ defined by }\; x \mapsto \meanrf{[x]_F}
\]
is the conditional expectation of $f$ with respect to the $\sigma$-algebra of $F$-invariant Borel sets, i.e.
\begin{equation}\label{eq:bridge}
\int_Y f d\mu = \int_Y \meanrFf d\mu,    
\end{equation}
for any $F$-invariant measurable set $Y \subseteq X$. In particular, $\Lp{\meanrFf} \le \Lp{f}$ for any $p \ge 1$.
\end{lemma}
\begin{proof}
The inequality $\Lp{\meanrFf} \le \Lp{f}$ is true in general for any conditional expectation \cite{Durrett}*{Theorem 4.1.11}, but in this case it trivially follows from Jensen's inequality $|\meanrFf|^p \le \meanr{F} |f|^p$ and \labelcref{eq:bridge} applied to $|f|^p$.

As for \labelcref{eq:bridge}, it is enough to prove it assuming that $Y = X$ and all $F$-classes have the same cardinality $n$ for some fixed $n \le |\N|$. To this end, let $S$ be the set of $<_\coc$-maximum elements in each $F$-class (which exist by \cref{Maxr_nonempty_finite}), so $S$ meets every $F$-class in exactly one point. Using this set $S$ and Luzin--Novikov uniformization, we get a Borel automorphism $T : X \to X$ that induces $F$, i.e. $[x]_F = \set{T^i x}_{i \in \Z}$ for every $x \in X$. Put $I \defeq \set{0,1,\dots,n-1}$ if $n$ is finite, and $I \defeq \Z$, otherwise, so for each $x \in X$, the sequence $(T^i x)_{i \in I}$ lists all elements of $[x]_F$ without repetition. Then:
\begin{align*}
\int_X f \; d \mu
&=
\sum_{i \in I} \int_{T^i(S)} f(x) \; d \mu(x)
\\
\eqcomment{change of variable $x$ to $T^i x$}
&= 
\sum_{i \in I} \int_S f(T^i x) \cdot \coc_x(T^i x) \; d \mu(x)
\\
&=
\int_S \sum_{i \in I} f(T^i x) \cdot \coc_x(T^i x) \; d \mu(x)
\\
&=
\int_S \sum_{i \in I} \meanrFf(x) \cdot \coc_x(T^i x) \; d \mu(x)
\\
\eqcomment{$\meanrFf$ is $T$-invariant}
&=
\sum_{i \in I} \int_S \meanrFf(T^i x) \cdot \coc_x(T^i x) \; d \mu(x)
\\
\eqcomment{change of variable $x$ to $T^{-i} x$}
&= 
\sum_{i \in I} \int_{T^i(S)} \meanrFf(x) \; d \mu(x)
=
\int_X \meanrFf \; d \mu
.
\qedhere
\end{align*}
\end{proof}

The following statement is the analogue of the maximal inequality, which makes it enough to prove \cref{ptwise_ergodic_for_graphs} for bounded functions.

\begin{lemma}[Approximate $L^\w$-continuity]\label{approx_Linf-continuity}
Let $F$ be a quasi-pmp equivalence relation on $(X,\mu)$ and let $\coc : F \to \R^+$ be its Radon--Nikodym cocycle with respect to $\mu$. If $F$ is $\coc$-finite, then for each $\e > 0$, there is an $F$-invariant $\mu$-co-$\e$ set $X' \subseteq X$ such that 
\[
\Linfbig{(\meanrFf) \rest{X'}} \le \frac{1}{\e} \Lone{f}.
\]
\end{lemma}

We need the following basic lemma for the proof.

\begin{lemma}\label{Linf<Lone}
For any $f \in L^1(X,\mu)$ and $\e > 0$, there is a $\mu$-co-$\e$ set $X' \subseteq X$ (of the form $\set{x \in X : |f(x)| \le r}$ for some $r \ge 0$) such that $\Linf{f \rest{X'}} \le \frac{1}{\e} \Lone{f}$.
\end{lemma}
\begin{proof}
Letting $r_\e \defeq \inf \set{r \ge 0 : \mu(f > r) \le \e}$, we show that $X' \defeq \set{f \le r_\e}$ works. Then $\set{f > r_\e} = \bigcup_{n \ge 1} \set{f > r_\e + \frac{1}{n}}$, so $\mu(\set{f > r_\e}) \le \e$, hence $X'$ is $\mu$-co-$\e$. On the other hand, $\mu(f \ge r_\e) \ge \e$ because 
\[
\set{f \ge r_\e} = \bigcap_{n \ge 1} \set{f > r_\e - \frac{1}{n}},
\] 
and $\mu(\set{f > r_\e - \frac{1}{n}}) > \e$ for each $n \ge 1$. Thus,
\[
\Linf{f \rest{X'}} \cdot \e \le r_\e \cdot \mu(f \ge r_\e) \le \Lone{f}.\qedhere
\]
\end{proof}

\begin{proof}[Proof of \cref{approx_Linf-continuity}] By \cref{local-global_bridge}, $\Lone{\meanrFf} \le \Lone{f} < \infty$. Thus,
\cref{Linf<Lone} applies to $\meanrFf$ and gives an $F$-invariant $\mu$-co-$\e$ set $X' \subseteq X$ such that $\Linfbig{(\meanrFf) \rest{X'}} \le \frac{1}{\e} \Lone{\meanrFf} \le \frac{1}{\e} \Lone{f}$.
\end{proof}

\begin{theorem}[Pointwise ergodic theorem for hyperfinite equivalence relations]\label{hyperfin_averages}
Let $F$ be a Borel quasi-pmp hyperfinite equivalence relation on $(X,\mu)$ and let $\coc : F \to \R^+$ be its Radon--Nikodym cocycle with respect to $\mu$. For any $f \in L^1(X,\mu)$ and any increasing sequence $(F_n)$ of finite Borel equivalence relations with $F = \bigcup_n F_n$, the pointwise limit
\[
\meanrFf \defeq \lim_{n \to \w} \meanrf{F_n}
\]
exists a.e.~and is equal to the conditional expectation of $f$ with respect to the $\si$-algebra of $F$-invariant measurable sets, i.e.
\begin{enumerate}[(a),series=hyperfin_averages]
	\item \label{item:hyperfin_averages:equality} $\int_Y \meanrFf \, d \mu = \int_Y f \, d \mu$ for any $F$-invariant measurable set $Y \subseteq X$.
\end{enumerate}
In particular,
\begin{enumerate}[resume*=hyperfin_averages]
    \item \label{item:hyperfin_averages:indep-of-witness} Up to null sets, the above limit does not depend on the choice of $(F_n)$.
	
	\item \label{item:hyperfin_averages:Lp} $\Lp{\meanrFf} \le \Lp{f}$ for every $p \ge 1$.
	
	\item \label{item:hyperfin_averages:ergodicity} $F$ is ergodic if and only if for every $f \in L^1(X,\mu)$, $\meanrFf = \int_X f d\mu$ a.e.
\end{enumerate}
\end{theorem}
\begin{proof}
The main part, \labelcref{item:hyperfin_averages:equality}, is just a version of \cite{Miller-Ts:erg_hyp_dec}*{Theorem 7.3} in the quasi-invariant setting, and the proof is the same, given \cref{local-global_bridge}, so we omit it. Parts \labelcref{item:hyperfin_averages:indep-of-witness,item:hyperfin_averages:ergodicity} are immediate from \labelcref{item:hyperfin_averages:equality}. Part \labelcref{item:hyperfin_averages:Lp} is a well-known fact about conditional expectation \cite{Durrett}*{Theorem 4.1.11}, and it can be seen directly here.
\end{proof}

\section{Proofs of \cref{ptwise_ergodic_for_graphs,ergodic_hyperfin_subgraph,ratio_ergodic_for_ergodic_graphs} from Main Lemma \namedthmlabelref{core_lemma}}\label{sec:equiv_of_thms}

We restate the statement of Main Lemma \namedthmlabelref{core_lemma} using the introduced terminology.

\begin{namedthm*}{Main Lemma \namedthmlabelref{core_lemma}}
Let $G$ be a locally countable $\mu$-nowhere hyperfinite quasi-pmp Borel graph on a standard probability space $(X,\mu)$ and let $\coc : E_G \to \R^+$ be the Radon--Nikodym cocycle of $E_G$ with respect to $\mu$. For any $f \in L^\infty(X,\mu)$ and $\e > 0$, there is a component-finite Borel subgraph $H \subseteq G$ such that $\meanrf{E_H} \approx_\e \E(f | \BC_{E_G})$ on a $\mu$-co-$\e$ set.
\end{namedthm*}

Assuming this lemma, we now derive \cref{ptwise_ergodic_for_graphs,ergodic_hyperfin_subgraph,ratio_ergodic_for_graphs}, restating them here using the introduced terminology.

\begin{namedthm*}[Ergodic theorem for quasi-pmp graphs]{\cref{ptwise_ergodic_for_graphs}}
	Let $G$ be a locally countable quasi-pmp Borel graph on a standard probability space $(X,\mu)$ and let $\coc : E_G \to \R^+$ be the Radon--Nikodym cocycle of $E_G$ with respect to $\mu$. There is an increasing sequence $(F_n)$ of $G$-connected finite Borel subequivalence relations of $E_G$ such that for any $p \ge 1$ and $f \in L^p(X,\mu)$, 
	\[
	\lim_{n \to \w} \meanrf{F_n} = \E(f | \BC_{E_G}) \hspace{8pt}\text{a.e. and in $L^p$.}
	\]
	
\end{namedthm*}
\begin{proof}
We first prove convergence in $L^p$ assuming the a.e.~convergence statement. Fix $f \in L^p(X,\mu)$ and $\e > 0$, and let $\hat{f} \in L^\infty(X,\mu)$ be such that $\Lp{f - \hat{f}} < \frac{\e}{3}$. \cite{Durrett}*{Theorem 4.1.11} implies that $\Lp{\E(f | \BC_{E_G}) - \E(\hat{f} | \BC_{E_G})} \le \Lp{f - \hat{f}} < \frac{\e}{3}$ and similarly, \cref{local-global_bridge} implies that $\Lp{\meanrf{F_n} - \meanr{F_n} \hat{f}} \le \Lp{f - \hat{f}} < \frac{\e}{3}$ for any $n \in \N$. Because $|\meanr{F_n} \hat{f}| \le \Linf{\hat{f}}$, it follows from the Dominated Convergence theorem and the a.e. convergence of $\meanr{F_n} \hat{f}$ to $\E(\hat{f} | \BC_{E_G})$ that $\meanr{F_n} \hat{f} \to \E(\hat{f} | \BC_{E_G})$ in $L^p$, so for all large enough $n$, $\Lp{\meanr{F_n} \hat{f} - \E(\hat{f} | \BC_{E_G})} < \frac{\e}{3}$. Thus, $\Lp{\meanrf{F_n} - \E(f | \BC_{E_G})} < \e$ by the triangle inequality.

We now prove the a.e.~convergence statement for a fixed $f \in L^1(X,\mu)$. If $G$ is $\mu$-hyperfinite, then \cref{hyperfin_averages} implies that any increasing sequence $(F_n)$ of $G$-connected finite Borel equivalence relations with $E_G = \bigcup_n F_n$ satisfies the desired conclusion. Thus, we assume that $G$ is not $\mu$-hyperfinite; in fact, by discarding a maximal (up to $\mu$-null sets) $E_G$-invariant Borel set on which $G$ is hyperfinite, we may assume that $G$ is $\mu$-nowhere hyperfinite.

Let $\DC \subseteq L^\infty(X,\mu)$ be a countable set that is dense in $L^1(X,\mu)$, and let $(f_n)_{n \ge 1}$ be an enumeration of $\DC$ such that each $f \in \DC$ is equal to $f_n$ for infinitely-many $n \in \N$. Furthermore, let $(\e_n)_{n \ge 1}$ be a sequence of positive reals converging to $0$. We inductively build an increasing sequence $(F_n)$ of $G$-connected finite Borel equivalence relations by taking $F_0 \defeq \Id_X$ and letting $F_{n+1}$ be the pullback to $X$ of $E_{H_{n+1}}$, where $H_{n+1}$ is a component-finite Borel subgraph of $\Gmod{F_n}$ given by Main Lemma \namedthmlabelref{core_lemma} when applied to the quotient graph $\Gmod{F_n}$ on $(\Xmod{F_n}, \mumod{F_n})$, the function $\meanr{F_n} f_{n+1}$, and $\e \defeq \e_{n+1}$. Thus, for each $n \ge 1$, $\meanr{F_n}f_n \approx_{\e_n} \E(f_n | \BC_{E_G})$ on an $F_n$-invariant $\mu$-co-$\e_n$ set $X_n \subseteq X$. 

Now fix $f \in L^1(X,\mu)$. By \cref{hyperfin_averages}\labelcref{item:hyperfin_averages:indep-of-witness} applied to $F \defeq \bigcup_n F_n$, it is enough to extract a subsequence $(F_{n_k})$ such that
\[
	\lim_{n \to \w} \meanrf{F_{n_k}} = \E(f | \BC_{E_G}) \; \text{a.e.}
\]
By the Borel--Cantelli lemma, it is enough to show that for any $\e \in (0,1)$, there is $n \in \N$ and an $F_n$-invariant $\mu$-co-$\e$ set $X' \subseteq X$ such that 
$
	\meanrf{F_n} \approx_{\frac{\e}{3}} \E(f | \BC_{E_G})
$
for all $x \in X'$. 

Because there is a subsequence $(f_{n_k})$ converging to $f$ in $L^1$ and conditional expectation is an $L^1$-contraction \cite{Durrett}*{Theorem 4.1.11}, we also have that $\E(f_{n_k} | \BC_{E_G}) \to \E(f | \BC_{E_G})$ in $L^1$, so there is a further subsequence that converges a.e. Thus, for any fixed $\e$, there is $n \in \N$ such that $\e_n$ and $\Lone{f - f_n}$ are both less than $\frac{\e^2}{3}$, and $\E(f_n | \BC_{E_G}) \approx_{\frac{\e}{3}} \E(f | \BC_{E_G})$ on a $\mu$-co-$\frac{\e}{3}$ set. By \cref{approx_Linf-continuity},
\[
	\left|\meanr{F_n} f_n - \meanrf{F_n}\right|
	\le 
	\meanr{F_n} |f_n - f| 
	\le 
	\frac{1}{\e}\Lone{f_n - f} < \frac{\e}{3},
\]
on a $\mu$-co-$\frac{\e}{3}$ set. Thus, by the triangle inequality, $\meanrf{F_n} \approx_\e \E(f | \BC_{E_G})$ on a $\mu$-co-$\e$ set.
\end{proof}

\begin{namedthm*}[Ergodic hyperfinite subgraph]{\cref{ergodic_hyperfin_subgraph}}
	Every locally countable ergodic Borel graph $G$ on a standard probability space $(X,\mu)$ admits an ergodic hyperfinite Borel subgraph $H \subseteq G$.
\end{namedthm*}
\begin{proof}
We first note that without loss of generality, we may assume that $\mu$ is $E_G$-quasi-invariant by replacing it with $\sum_{m \ge 1} 2^{-n} (\ga_m)_\ast \mu$, where $(\ga_m)_{m \ge 1} \subseteq [E_G]$ is a sequence of Borel automorphisms such that $E = \bigcup_{m \ge 1} \Graph(\ga_m)$, which exists by the Feldman--Moore theorem \cite{Feldman-Moore}. Now let $(F_n)$ be an increasing sequence of finite Borel equivalence relations provided by \cref{ptwise_ergodic_for_graphs}. Then $F \defeq \bigcup_{n \in \N} F_n$ is hyperfinite and $G$-connected. Furthermore, for every $f \in L^1(X,\mu)$, $\meanrFf = \int_X f d\mu$ a.e., so by \cref{hyperfin_averages}\labelcref{item:hyperfin_averages:ergodicity}, $F$ is ergodic.
\end{proof}

Lastly, we state and prove the general version of \cref{ratio_ergodic_for_ergodic_graphs}.

\begin{theorem}[Ratio ergodic theorem for quasi-mp graphs]\label{ratio_ergodic_for_graphs}
Let $G$ be a locally countable quasi-mp Borel graph on a $\sigma$-finite standard measure space $(X,\mu)$ and let $\coc : E_G \to \R^+$ be the Radon--Nikodym cocycle of $E_G$ with respect to $\mu$. There is an increasing sequence $(F_n)$ of $G$-connected finite Borel equivalence relations $F_n$ such that for any $f,g \in L^1(X,\mu)$ with $g > 0$,
\[
r \defeq \lim_{n \to \w} 
\frac{\meanrf{F_n}}{\meanr{F_n} g}
= 
\E_{\mu_g}(fg^{-1} | \BC_{E_G}) \hspace{8pt}\text{a.e.}
\]
\noindent where $\E_{\mu_g}$ denotes the conditional expectation with respect to the measure $\mu_g$ defined by $d \mu_g \defeq \frac{1}{\Lone{g}} g d \mu$. In other words, for any $E_G$-invariant Borel set $Y \subseteq X$,
\[
\int_Y rg d\mu = \int_Y f d\mu.
\]
In particular, when $G$ is $\mu$-ergodic, 
\[
\lim_{n \to \w} 
\frac{\meanrf{F_n}}{\meanr{F_n} g}
= 
\frac{\int_X f d\mu}{\int_X g d\mu} \hspace{8pt}\text{a.e.}
\]
\end{theorem}

\begin{proof}
Observe that $\mu_g \sim \mu$ and the Radon--Nikodym cocycle of $E_G$ with respect to $\mu_g$ is $\~\coc_x(y) = \coc_x(y) g(y) g(x)^{-1}$, so
\[
\frac{\meanrf{F_n}(x)}{\meanr{F_n} g(x)}
=
\frac{\sum_{y \in [x]_{F_n}} f(y) \coc_x(y)}{\sum_{y \in [x]_{F_n}} g(y) \coc_x(y)} 
=
\frac{\sum_{y \in [x]_{F_n}} f(y)g(y)^{-1} \coc_x(y) g(y) g(x)^{-1}}{\sum_{y \in [x]_{F_n}} \coc_x(y) g(y) g(x)^{-1}} 
= 
\mean{\~\coc}{F_n} (f g^{-1})(x).
\]
It is clear that $fg^{-1} \in L^1(X,\mu_g)$, so \cref{ptwise_ergodic_for_graphs} applied to $G$ and $fg^{-1}$ on $(X,\mu_g)$ gives the a.e. convergence of $\frac{\meanrf{F_n}}{\meanr{F_n} g}$ to $\E_{\mu_g}(fg^{-1} | \BC_{E_G})$. This means that for any $E_G$-invariant Borel set $Y \subseteq X$,
\[
\int_Y r g d\mu = \Lone{g} \cdot \int_Y r d\mu_g = \Lone{g} \cdot \int_Y fg^{-1} d\mu_g = \int_Y f d\mu.
\]
Lastly, if $G$ is $\mu$-ergodic then it is also $\mu_g$-ergodic because $\mu \sim \mu_g$, hence 
\[
\E_{\mu_g}(fg^{-1} | \BC_{E_G}) = \int_X fg^{-1} d\mu_g = \frac{\int_X f d\mu}{\int_X g d\mu} \;\;\text{ a.e.}
\qedhere
\]
\end{proof}

\section{Mass transport along a cocycle}\label{sec:flows}

Throughout this section, let $X$ be a standard Borel space, $E$ a countable Borel equivalence relation on $X$, and $\coc : E \to \R^+$ a Borel cocycle.

\subsection{Preliminaries}

We refer to a function $\phi : E \to [0,\w)$ as a \textit{mass transport} or just \textit{transport} (in $E$), and we think of the value $\phi(x,y)$ as the fraction of the $\coc$-mass of $x$ that gets transported from $x$ to $y$; thus, what $y$ receives in this transaction is $\phi(x,y) \coc_y(x)$. With this in mind, we define functions $\infl,\, \outfl : X \to [0,\w]$ by
\begin{align*}
	\outfl (x) &\defeq \sum_{y \in [x]_E} \phi(x,y)
	\\
	\infl (x) &\defeq \sum_{y \in [x]_E} \phi(y,x) \coc_x(y).
\end{align*}

\begin{defn}
We say that a transport $\phi : E \to [0,\w)$ is \emph{$\coc$-bounded} if $\outfl$ and $\infl$ are bounded by $1$.
\end{defn}

The definition of a $\coc$-bounded transport in $E$ is exactly the same as that of a \emph{$\coc$-invariant fuzzy partial injection} defined in \cite{Miller:meas_with_cocycle_II}, and in our arguments we will only use such transports. 

When defining a transport, we will only partially specify its values with the convention that the undefined values are treated as $0$. We also use the following terminology:
\begin{itemize}
    \item The \emph{domain} of $\phi$ is $\dom(\phi) \defeq \set{(x,y) \in E : \phi(x,y) \ne 0}$.
    
	\item The \emph{vertex-domain} of $\phi$ is $\vdom(\phi) \defeq \proj_1(\dom(\phi)) \cup \proj_2(\dom(\phi))$. 
	
	\item For sets $Y,Z \subseteq X$, we say $Y \times Z$ is \emph{$\phi$-closed} if $\dom(\phi)$ is disjoint from $(Y \times Z^c) \cup (Y^c \times Z)$. 
	
	%\item Put $\netfl \defeq \infl - \outfl$ and call it the \emph{net} of $\phi$.
\end{itemize}

\subsection{Mass transport principle}

\begin{lemma}\label{in-flow=out-flow}
	For a transport $\phi$ in $E$, and for any subsets $U,V$ of the same $E$-class in $X$ such that $U \times V$ is $\phi$-closed, we have
	\[
	\int_U \outfl d\coc = \int_V \infl(v) d\coc.
	\]
\end{lemma}
\begin{proof}
	Letting $x$ be any element in $[U \cup V]_E$, we compute:
	\begin{align*}
		\sum_{u \in U} \outfl(u) \cdot \coc_x(u)
		&=
		\sum_{u \in U} \sum_{v \in V} \phi(u,v) \cdot \coc_x(u)
		\\
		\eqcomment{Fubini}
		&=
		\sum_{v \in V} \sum_{u \in U} \phi(u,v) \cdot \frac{\coc_x(u)}{\coc_x(v)} \cdot \coc_x(v)
		\\
		\eqcomment{cocycle identity}
		&=
		\sum_{v \in V} \sum_{u \in U} \phi(u,v) \cdot \coc_v(u) \cdot \coc_x(v)
		\\
		&=
		\sum_{v \in V} \infl(v) \cdot \coc_x(v).
		\qedhere
	\end{align*}
\end{proof}

The following is the measure version of \cref{in-flow=out-flow}, which is just \cite{Miller:meas_with_cocycle_II}*{Proposition 6.4} or \cite{Kechris-Miller}*{Proposition 8.2} phrased in our terminology.

\begin{prop}\label{in-flow=out-flow_for_measure}
	For a Borel transport $\phi$ in $E$ and any $\coc$-invariant Borel probability measure $\mu$ on $X$,
	
	\[
	\int_X \outfl \,d\mu = \int_X \infl \,d\mu.
	\]
\end{prop}
\begin{proof}
	We assume without loss of generality that $\phi(x,x) = 0$ for all $x \in X$ because changing the value of $\phi(x,x)$ does not change the validity of the desired equality.
	
	\begin{claim+}
	    $E \setminus \Id_X$ is the disjoint union of the sets $\Graph(\ga_n) \setminus \Id_X$, $n \in \N$, where each $\ga_n$ is a Borel involution on $X$.
	\end{claim+}
	\begin{pf}
	    By the Feldman--Moore theorem \cite{Feldman-Moore}, $E$ is a union of $\Graph(\beta_n)$, $n \in \N$, where each $\beta_n$ is a Borel involution on $X$. Recalling that set-theoretically, $\beta_n$ is the same as $\Graph(\beta_n)$, we define partial involutions $\beta_n' \defeq \beta_n \setminus \bigcup_{k < n} \beta_k$ whose graphs are pairwise disjoint. Let $\gamma_n : X \to X$ be equal to $\beta_n'$ on the domain of $\beta_n'$ and to the identity function, elsewhere.
	\end{pf}
	Putting $X_n \defeq \set{x \in X : \gamma_n x \ne x}$, we compute:
	\begin{align*}
		\int_X \outfl(x) d\mu(x) &= \int_X \sum_{n \in \N : \gamma_n x \ne x} \phi(x, \gamma_n x) \,d\mu(x)
		\\
		\eqcomment{Fubini}
		&=
		\sum_n \int_{X_n} \phi(x,\gamma_n x) \,d\mu(x) 
		\\
		\eqcomment{change of variable $x$ to $\gamma_n x$}
		&=
		\sum_n \int_{X_n} \phi(\gamma_n x, x) \cdot \coc_x(\gamma_n x) \,d\mu(x)
		\\
		\eqcomment{Fubini}
		&=
		\int_X \sum_{n \in \N : \gamma_n x \ne x} \phi(\gamma_n x, x) \cdot \coc_x(\gamma_n x) \,d\mu(x) 
		\\
		&=
		\int_X \infl(x) \,d\mu(x).
		\qedhere
	\end{align*}
\end{proof}

\subsection{Deficiency and $\coc$-invariant measures}

For a $\coc$-bounded transport $\phi$ in $E$,

\begin{itemize}
	\item call a point $x \in X$ a \emph{$\phi$-source} (resp. \emph{$\phi$-sink}) if $\outfl(x) - \infl(x)$ is positive (resp. negative). We denote the sets of $\phi$-sources and $\phi$-sinks by $\Sources(\phi)$ and $\Sinks(\phi)$, respectively. 
	
	\item say that $\phi$ \emph{disbalances} an $E$-class $C$ if $C$ contains at least one $\phi$-source but no $\phi$-sink, or vice versa, at least one $\phi$-sink but no $\phi$-source.
	
	\item say that $\phi$ \emph{disbalances} an $E$-invariant set $Z \subseteq X$ if it disbalances every $E$-class of $Z$.
	
	\item call $Y \subseteq X$ \emph{$\coc$-deficient} if there is a $\coc$-bounded Borel transport $\phi$ in $E$ disbalancing $[Y]_E$.
\end{itemize}

\smallskip

\noindent Taking sums of transports on disjoint $E$-invariant domains, we see that:
\begin{obs}\label{deficient-ideal}
	$\coc$-deficient sets form a $\si$-ideal.
\end{obs}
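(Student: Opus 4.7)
The plan is to verify the two defining properties of a $\sigma$-ideal, namely closure under taking subsets and closure under countable unions (the empty set being trivially $\rho$-deficient via the zero flow).

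Closure under subsets is immediate: if $Y' \subseteq Y$, then $[Y']_E \subseteq [Y]_E$, so any Borel $\rho$-flow $\phi$ that disbalances every $E$-class of $[Y]_E$ also disbalances every $E$-class of $[Y']_E$.

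For countable unions, suppose each $Y_n$ is $\rho$-deficient, witnessed by a Borel $\rho$-flow $\phi_n$. My first instinct would be to simply form the convex combination $\phi \defeq \sum_n 2^{-n} \phi_n$, which is clearly a Borel $\rho$-flow (both $\infl[\phi]$ and $\outfl[\phi]$ are bounded by $\sum_n 2^{-n} = 1$). The trouble is that this $\phi$ need not disbalance $[\bigcup_n Y_n]_E$: on an $E$-class $C$ that lies in several $[Y_n]_E$ simultaneously, the individual $\phi_n$'s could disbalance $C$ in opposite directions, and the combined net flow might then be balanced. I would resolve this by first carving $\bigcup_n [Y_n]_E$ into a Borel $E$-invariant disjoint sequence $Z_n \defeq [Y_n]_E \setminus \bigcup_{m < n} [Y_m]_E$ and replacing each $\phi_n$ with its restriction $\phi_n' \defeq \phi_n \rest{Z_n \times Z_n}$.

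Because $Z_n$ is $E$-invariant, for any $x \in Z_n$ the entire class $[x]_E$ sits inside $Z_n$, so restricting $\phi_n$ to $Z_n \times Z_n$ does not alter the flow into or out of $x$; in particular $\netfl[\phi_n'] = \netfl[\phi_n]$ on $Z_n$, so every class of $Z_n$ is still disbalanced in the original direction. Then $\phi \defeq \sum_n 2^{-n} \phi_n'$ is a Borel $\rho$-flow, and on every class $C \subseteq Z_n$ only the single summand $2^{-n} \phi_n'$ contributes to $\netfl[\phi]$ (the other summands vanish on $Z_n$ by the pairwise $E$-invariant disjointness of the $Z_m$'s), so $C$ is disbalanced by $\phi$ in the same direction as by $\phi_n$. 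Since $\bigcup_n Z_n = \bigcup_n [Y_n]_E = [\bigcup_n Y_n]_E$, this shows that $\bigcup_n Y_n$ is $\rho$-deficient. The only conceptual hurdle is the direction-disagreement issue across the $\phi_n$'s, which the invariant disjointification cleanly resolves; everything else is a routine verification that the sum-of-flows is Borel and has bounded in- and out-flows.
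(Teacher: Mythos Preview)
Your proof is correct and follows the same idea as the paper, which simply says ``taking sums of $\rho$-flows''; you have correctly fleshed out that hint, including the disjointification of the $[Y_n]_E$ needed to prevent the individual witnessing flows from cancelling one another on overlapping classes. Note that once you pass to the pairwise disjoint $E$-invariant sets $Z_n$, the $2^{-n}$ weights become unnecessary (at each point only one $\phi_n'$ contributes), but including them does no harm.
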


Thus, we say that a statement holds \emph{modulo $\coc$-deficient} if it holds on $X \setminus D$ for some $E$-invariant $\coc$-deficient Borel set $D$.

\cref{in-flow=out-flow_for_measure} immediately gives the following corollary.

\begin{cor}\label{deficient=>no_prob_meas}
	If a Borel $Y \subseteq X$ is $\coc$-deficient, then $\mu(Y) = 0$ for every $\coc$-invariant Borel probability measure $\mu$ on $X$.
\end{cor}

This corollary is all we need about mass transports in our proofs. However, it is well worth pointing out that its converse is also true (much more difficult to prove) and it is the content of \cite{Miller:meas_with_cocycle_II}*{Theorem 3}. We restate this here in our terms for the sake of completeness:

\begin{theorem}[Miller 2008]
	For a countable Borel equivalence relation $E$ on a standard Borel space $X$ and a Borel cocycle $\coc : E \to \R^+$, 
	\begin{enumerate}[(I),widest=II]
		\item \underline{either}: $X$ is $\coc$-deficient,
		
		\item \underline{or else}: there is a $\coc$-invariant Borel probability measure on $X$.
	\end{enumerate}
\end{theorem}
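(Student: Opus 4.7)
My plan is to prove the nontrivial direction — namely, that if $X$ is not $\rho$-deficient then a $\rho$-invariant Borel probability measure on $X$ exists — since the other direction is exactly \cref{deficient=>no_prob_meas}. The strategy I would pursue is a duality argument: by \cref{in-flow=out-flow_for_measure}, a Borel probability measure $\mu$ is $\rho$-invariant if and only if $\int_X \netfl\,d\mu = 0$ for every Borel $\rho$-flow $\phi$. Thus $\rho$-invariant probability measures are exactly the positive norm-$1$ linear functionals on $B(X)$ (the bounded Borel functions on $X$, sup-norm) that annihilate the convex cone
\[
C \defeq \set{\netfl : \phi \text{ is a Borel $\rho$-flow on $X$}}.
\]

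The first step would then be a Hahn--Banach / Farkas separation: if the intersection of $C^\perp$ with the state space of $B(X)$ is empty, one should obtain Borel $\rho$-flows $\phi_1,\ldots,\phi_n$ and a constant $c > 0$ such that $\sum_i \netfl[\phi_i] \le -c$ pointwise on $X$. Averaging, $\psi \defeq \tfrac{1}{n}\sum_i \phi_i$ is again a Borel $\rho$-flow (the in- and out-flow bounds are convex in $\phi$), with $\netfl[\psi] \le -c/n < 0$ everywhere; hence every $E$-class contains $\psi$-sources and no $\psi$-sinks, so $\psi$ disbalances $[X]_E = X$, witnessing $\rho$-deficiency. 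The second step — which I would carry out in parallel — is to iterate and combine: if the first step only produces a flow that disbalances a Borel $E$-invariant subset $Y \subseteq X$, one takes a maximal countable family of such $Y$'s (using the $\sigma$-ideal structure of $\rho$-deficient sets observed right after \cref{defn:rho-deficient}), passes to the $E$-invariant complement $X^*$, and re-runs the argument. The failure of $\rho$-deficiency of $X$ then collapses the iteration at the global level.

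The main obstacle is the Hahn--Banach step itself: classical separation produces an abstract functional, not a Borel object, so some additional effort is needed to realize the witness as an honest Borel $\rho$-flow with a \emph{pointwise} bound on $\netfl$. A more hands-on route that avoids this difficulty, and which I would pursue as an alternative, is to mimic the marker-lemma exhaustion in Nadkarni's proof of his theorem (the case $\rho \equiv 1$): exhaust $X$ by an increasing sequence of $\rho$-bounded Borel sets and, at each stage, use the Feldman--Moore decomposition together with Jankov--von Neumann uniformization to construct, class by class yet in a globally Borel way, either partial flows draining the ``excess mass'' or normalized measures detecting conservation. The construction is then driven by a variant of the Krein--Milman theorem applied to the compact convex set of $\rho$-invariant probability measures on the conservative piece, whose non-emptiness is forced by the non-$\rho$-deficiency of $X$.
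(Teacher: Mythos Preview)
The paper does not prove this theorem: it is stated only for completeness and attributed to \cite{Miller:meas_with_cocycle_II}*{Theorem 3}, with the explicit remark that the nontrivial direction is ``much more difficult to prove'' than \cref{deficient=>no_prob_meas}. So there is no proof in the paper to compare your proposal against.

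That said, your sketch has real gaps worth naming. For the Hahn--Banach route: the dual of $B(X)$ under the sup norm is the space of bounded finitely additive set functions, so a state annihilating $C$ need not be a countably additive Borel measure; your ``if and only if'' characterization of $\rho$-invariance via $\int_X \netfl\,d\mu = 0$ for all flows is asserted but not argued (\cref{in-flow=out-flow_for_measure} gives only one implication); and the passage from ``no separating state'' to a \emph{finite} sum with $\sum_i \netfl[\phi_i] \le -c$ \emph{pointwise} is unjustified --- separation produces a functional inequality, not a pointwise one, and you have not shown $C$ is closed in any topology in which the relevant state space is compact. Your alternative Nadkarni-style exhaustion is too vague to evaluate, and the closing appeal to Krein--Milman presupposes non-emptiness of the set of $\rho$-invariant probability measures, which is precisely the conclusion you are after.
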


\subsection{Transporting a prescribed fraction of mass}

\begin{lemma}\label{finite-flow-definer}
	For any disjoint nonempty $\coc$-finite subsets $U,V$ of the same $E$-class in $X$ and a function $\p : U \to [0,1]$, if $\int_U \p \; d \coc \le \coc(V)$, then there is a $\coc$-bounded transport $\phi$ in $E$ with $\dom(\phi) \subseteq U \times V$ such that $\outfl \rest{U} = \p$.
\end{lemma}
\begin{proof}
    For each $(u,v) \in U \times V$, define $\phi(u,v) \defeq \p(u) \frac{\coc(v)}{\coc(V)}$. Then for each $u \in U$,
    \[
    \outfl(u) = \sum_{v \in V} \p(u) \frac{\coc(v)}{\coc(V)} = \p(u),
    \]
    while for each $v \in V$,
    \[
    \infl(v) = \sum_{u \in U} \p(u) \frac{\coc_v(v)}{\coc_v(V)} \coc_v(u) = \sum_{u \in U} \p(u) \frac{\coc_v(u)}{\coc_v(V)} = \frac{1}{\coc(V)} \int_U \p \; d\coc \le 1.
    \qedhere
    \]
\end{proof}

The proof of the last lemma can be carried out in a uniformly Borel fashion, yielding:

\begin{lemma}\label{flow-definer}
	Let $F \subseteq E$ be a $\coc$-finite Borel subequivalence relation, $U,V \subseteq X$ be disjoint Borel sets, and $\p : U \to [0,1]$ a Borel function. If for each $F$-class $Y$, $\int_{U \cap Y} \p \; d\coc \le \coc(V \cap Y)$, then there exists a $\coc$-bounded Borel transport $\phi$ with $\dom(\phi) \subseteq (U \times V) \cap F$ and $(\outfl) \rest{U} = \p$.
\end{lemma}

\section{Hyperfiniteness via cuts}\label{sec:finitizing_cuts}

In this section we present an easy but very useful characterization of $\mu$-hyperfinitess that turns non-$\mu$-hyperfiniteness into a positive property. Let $X$ be a standard Borel space and let $G$ be a locally countable Borel graph on it.

\begin{defn}
    Call a set $V \subseteq X$ a \emph{finitizing} (resp. \emph{hyperfinitizing}) \emph{vertex-cut for $G$} if $G_{-V}$ is component-finite (resp. hyperfinite). Likewise, call a subgraph $H \subseteq G$ a \emph{finitizing} (resp. \emph{hyperfinitizing}) \emph{edge-cut for $G$} if $G \setminus H$ is component-finite (resp. hyperfinite).
\end{defn}

Call a sequence of sets \emph{vanishing} if it is decreasing and has empty intersection.

\begin{prop}\label{hyperfin=vanish_edge-cuts}
	A locally countable Borel graph $G$ is hyperfinite if and only if it admits a vanishing sequence $(H_n)_n$ of finitizing Borel edge-cuts.
\end{prop}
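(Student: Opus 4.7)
The plan is to exploit the definition of hyperfiniteness for a graph, given in the introduction as an increasing union of component-finite Borel subgraphs, and to pass between ``increasing sequence of component-finite Borel subgraphs exhausting $G$'' and ``decreasing vanishing Borel finitizing edge-cuts'' by taking complements inside $G$.

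For the forward direction, I would start from a witness $(G_n)$ to the hyperfiniteness of $G$: an increasing sequence of component-finite Borel subgraphs with $\bigcup_n G_n = G$. Setting $H_n \defeq G \setminus G_n$ then yields a decreasing sequence of Borel subsets of $G$; each $H_n$ is a finitizing edge-cut since $G \setminus H_n = G_n$ is component-finite, and the computation $\bigcap_n H_n = G \setminus \bigcup_n G_n = \emptyset$ shows $(H_n)$ is vanishing.

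Conversely, given a vanishing sequence $(H_n)$ of Borel finitizing edge-cuts, I would set $G_n \defeq G \setminus H_n$. Decreasingness of $(H_n)$ translates to increasingness of $(G_n)$; each $G_n$ is a component-finite Borel subgraph by hypothesis; and $\bigcup_n G_n = G \setminus \bigcap_n H_n = G$. Thus $(G_n)$ witnesses the hyperfiniteness of $G$.

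No real obstacle arises; the whole argument is bookkeeping by complementation within the edge set of $G$, essentially a restatement of the graph-theoretic definition of hyperfiniteness in its contrapositive form.
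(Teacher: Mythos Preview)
Your proposal is correct and essentially identical to the paper's proof: both directions are handled by complementing within $G$ between an increasing exhaustion by component-finite Borel subgraphs and a vanishing sequence of finitizing edge-cuts. The only cosmetic difference is that the paper phrases the forward direction starting from a witness $(F_n)$ to the hyperfiniteness of $E_G$ and setting $G_n \defeq G \cap F_n$, whereas you invoke the equivalent subgraph formulation of hyperfiniteness directly; the content is the same.
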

\begin{proof}
	\lefttoright Letting $(G_n)$ be an increasing sequence of component-finite Borel graphs with $G = \bigincU_n G_n$, we see that the graphs $G \setminus G_n$ are vanishing finitizing edge-cuts for $G$.
	
	\righttoleft For each $n \ge 1$, $G_n \defeq G \setminus H_n$ is component-finite and $G = \bigincU_{n = 1}^\w G_n$.
\end{proof}

\begin{remark}
The analogue for vertex-cuts of \cref{hyperfin=vanish_edge-cuts} also holds for locally finite graphs. For locally countable graphs, the implication $\pmi$ still holds, but $\imp$ may fail: take any aperiodic hyperfinite equivalence relation $E$ and let $G \defeq E \setminus \Id_X$.
\end{remark}

Let $\mu$ and $\nu$ be finite Borel measures on $X$ and $G$, respectively.

\begin{defn}
	The \emph{finitizing vertex-price} (with respect to $\mu$) and \emph{finitizing edge-price} (with respect to $\nu$) of $G$ are the quantities:
	\begin{align*}
		\fvp(G) 
		&\defeq
		\inf \set{\mu(V) : V \subseteq X \text{ is a Borel finitizing vertex-cut for $G$}},
		\\
		\fep{\nu}(G) 
		&\defeq 
		\inf \set{\nu(H) : H \subseteq G \text{ is Borel finitizing edge-cut for $G$}}.
	\end{align*}
	Replacing ``finitizing'' with ``hyperfinitizing'' in the above definitions, we obtain \emph{hyperfinitizing vertex} and \emph{edge prices} denoted by $\hvp(G)$ and $\hep{\nu}(G)$.
\end{defn}

We characterize $\mu$-hyperfiniteness in terms of $\hvp(G)$, but the proof of this goes through finitizing edge-cuts and $\hep{\lift{\mu}}(G)$ for a measure $\lift{\mu}$ that is a \textit{lift} of $\mu$, that is: for any Borel (symmetric) subgraph $H \subseteq G$, $H$ is $\lift{\mu}$-null if and only if $\proj_1(H)$ is $\mu$-null; in particular, $(\proj_1)_\ast \lift{\mu} \sim \mu$. For example, writing $G$ as a countable union of the graphs of Borel maps $\ga_n : X \to X$ (by Luzin--Novikov uniformization), we define, for a Borel set $H \subseteq G$,
\[
\lift{\mu}(H) \defeq \sum_{n \ge 1} 2^{-n} \int_X \1_H(x, \ga_n x) d\mu(x).
\]

\begin{prop}\label{char_of_hyperfin_via_price}
	For a locally countable quasi-pmp Borel graph $G$ on a standard probability space $(X,\mu)$ and a lift $\lift{\mu}$ of $\mu$ to a \textbf{finite} Borel measure on $G$, the following are equivalent:
	\begin{enumerate}[(I),series=char_of_hyperfin_via_price,widest=III]
		\item \label{item:char_of_hyperfin:defn} $G$ is $\mu$-hyperfinite.
		
		\item \label{item:char_of_hyperfin:hyperfin_vertex-cuts} $\hvp(G) = 0$.
		
		\item \label{item:char_of_hyperfin:hyperfin_edge-cuts} $\hep{\lift{\mu}}(G) = 0$.
		
		\item \label{item:char_of_hyperfin:fin_edge-cuts} $\fep{\lift{\mu}}(G) = 0$.
	\end{enumerate}
	These statements are implied by the following, and are equivalent to it when $G$ is locally finite:
	\begin{enumerate}[resume*=char_of_hyperfin_via_price]
	    \item \label{item:char_of_hyperfin:fin_vertex-cuts} $\fvp(G)=0$.
	\end{enumerate}
\end{prop}
\begin{proof}
	\labelcref{item:char_of_hyperfin:defn} $\imp$ \labelcref{item:char_of_hyperfin:hyperfin_vertex-cuts} is trivial. For \labelcref{item:char_of_hyperfin:hyperfin_vertex-cuts} $\imp$ \labelcref{item:char_of_hyperfin:hyperfin_edge-cuts}, if $V \subseteq X$ is an arbitrarily $\mu$-small hyperfinitizing vertex-cut then $\EdgsBtw{G}{V,X}$ is an arbitrarily $\lift{\mu}$-small hyperfinitizing edge-cut, by $(\proj_1)_\ast \lift{\mu} \ll \mu$. \labelcref{item:char_of_hyperfin:hyperfin_edge-cuts} $\imp$ \labelcref{item:char_of_hyperfin:fin_edge-cuts} follows by an $\frac{\e}{2}+\frac{\e}{2}$ argument using that hyperfinite graphs admit $\lift{\mu}$-arbitrarily small finitizing edge-cuts, by \cref{hyperfin=vanish_edge-cuts} and the finiteness of $\lift{\mu}$. 
	
	For \labelcref{item:char_of_hyperfin:fin_edge-cuts} $\imp$ \labelcref{item:char_of_hyperfin:defn}, suppose $\fep{\lift{\mu}}(G) = 0$, and let $(H_n)$ be a sequence of finitizing Borel edge-cuts for $G$ such that $\mu(H_n)$ is summable. Then the graphs $H_n' \defeq \bigcup_{k \ge n} H_k$ are decreasing edge-cuts for $G$ and $H = \bigcap_n H_n'$ is $\lift{\mu}$-null. Hence, $Z \defeq \proj_1(H)$ is $\mu$-null because $\lift{\mu}$ is a lift of $\mu$, and by the quasi-invariance of $\mu$, $[Z]_{E_G}$ is still $\mu$-null. Throwing it out, makes $(H_n')$ a vanishing sequence of finitizing edge-cuts, so $G$ becomes hyperfinite by \cref{hyperfin=vanish_edge-cuts}. The same argument applied to vertices shows \labelcref{item:char_of_hyperfin:fin_vertex-cuts} $\imp$ \labelcref{item:char_of_hyperfin:defn}.
	
	It remains to show \labelcref{item:char_of_hyperfin:defn} $\imp$ \labelcref{item:char_of_hyperfin:fin_vertex-cuts}, assuming $G$ is locally finite. To this end, removing a $\mu$-null set from $X$, we may write $G$ as an increasing union of component-finite Borel graphs $G_n$. Then the sets $V_n \defeq \set{x \in X : x \in \bdrin{G} [x]_{G_n}}$ form a vanishing sequence of finitizing vertex-cuts because $\bdrout{G} [x]_{G_n}$ is finite for every $x \in X$. Thus, $\mu(V_n) \to 0$ by the finiteness of $\mu$.
\end{proof}

\begin{remark}\label{Dye-Krieger}
Although \cref{char_of_hyperfin_via_price} has a simple proof, it immediately implies the Dye--Krieger theorem \cite{Kechris-Miller}*{Theorem 6.11} that the increasing union $E \defeq \bigincU_n E_n$ of $\mu$-hyperfinite equivalence relations is $\mu$-hyperfinite. Indeed, if $H_n$ is a finitizing edge-cut for $E_n$, then $H \defeq \bigcup_n H_n$ a hyperfinitizing edge-cut for $E$ because $E \setminus H = \bigincU_n (E_n \setminus H)$ and each $E_n \setminus H$ is a component-finite graph. Taking a lift $\lift{\mu}$ to $E$ of the measure $\mu$, we can make $\lift{\mu}(H)$ arbitrarily small by taking the $H_n$ so that $\sum_n \lift{\mu}(H_n)$ is sufficiently small, hence $\hep{\lift{\mu}}(E) = 0$.
\end{remark}

\section{Packed and approximately saturated tilings}\label{sec:packing_and_saturation}

Throughout this section, let $X$ be a standard Borel space, $E$ be a countable Borel equivalence relation on $X$, and $\coc : E \to \R^+$ a Borel cocycle. Within a given Borel collection $\SC$ of $\coc$-finite subsets of $X$, we will build Borel tilings of various degrees of maximality.

When $\coc \equiv 1$, i.e. in the pmp setting, all of the results below are simpler and are proven in \cite{Miller-Ts:erg_hyp_dec}*{Section 4}.

\subsection{Tilings}

For any $\SC \subseteq \Pow(X)$, we put $\dom(\SC) \defeq \bigcup \SC$ and call it the \emph{domain} of $\SC$. For $Y \subseteq X$, put 
\[
\SC \rest{Y} \defeq \set{U \in \SC : U \subseteq Y}.
\]
We call $\SC$ a \textit{tiling} if the sets in $\SC$ are pairwise disjoint. In this case, we refer to the sets in $\SC$ as \textit{tiles} and denote by $E(\SC)$ the equivalence relation that is the identity outside of $\dom(\SC)$ and on $\dom(\SC)$ its classes are exactly the tiles in $\SC$.

We say that a tiling $\PC \subseteq \Pow(X)$ is \textit{maximal} within $\SC \subseteq \Pow(X)$, if no set in $\SC$ is disjoint from $\dom(P)$.

\begin{defn}\label{defn:prepart_and_properties}
	Let $\SC \subseteq \FinrE$.
	\begin{itemize}
	    \item Let $\^\SC \subseteq \FinrE$ denote the \textit{upward closure} of $\SC$, i.e. the collection of all $\coc$-finite sets that are countable increasing unions of sets in $\SC$. Call $\SC$ \textit{upward closed} if $\^\SC = \SC$.
	
		\item Say that $\SC$ is \textit{finitely based} if for any $A \in \SC$ and any finite $B \subseteq A$ there is a finite $A' \in \SC$ with $B \subseteq A' \subseteq A$. This immediately implies a stronger statement that for any $\e > 0$, $A'$ can be taken such that $\coc(A') \ge (1 - \e) \coc(A)$.
		
		\item We say that $\SC$ has \textit{finitely based quotients} if for each Borel tiling $\PC \subseteq \SC$, the quotient $\SCmod{E(\PC)}$ is finitely based.
	\end{itemize}
\end{defn}

When $\SC \subseteq \FinrE$ is Borel, $\dom(\SC)$ is analytic (hence measurable) in general. However:

\begin{lemma}\label{dom_Borel}
	If a Borel collection $\SC \subseteq \FinrE$ is either finitely based or a tiling, then $\dom(\SC)$ is Borel.
\end{lemma}
\begin{proof}
	In the first case, $\dom(\SC) = \dom(\SC \cap \FinE)$ and every $x \in \dom(\SC \cap \FinE)$ is contained in only countably-many sets in $\SC \cap \FinE$. In the second case, every $x \in \dom(\SC)$ is contained in exactly one set in $\SC$. Thus, in either case, \cref{membership_is_Borel} and Luzin--Novikov uniformization imply that $\dom(\SC)$ is Borel.
\end{proof}

Let $\PC, \PC' \subseteq \Pow(X)$ be tilings. We say that $\PC'$ is a \emph{partial extension} of $\PC$, noted $\PC' \gtrsim \PC$, if each set in $\PC'$ is $E(\PC)$-invariant. If moreover, $\dom(\PC') \supseteq \dom(\PC)$, we say that $\PC'$ is an \emph{extension} of $\PC$ and write $\PC' \ge \PC$. 

For $N \le \infty$, we call a sequence $(\PC_n)_{n < N}$ of tilings \emph{coherent} if $\PC_i \lesssim \PC_j$ for all $i \le j$. In this case, observe that although the $E(\PC_n)$ are not increasing, $\Id_X \cup \bigcup_{n < m} E(\PC_n)$ is an equivalence relation for each $m \le N$. It thus makes sense to define 
\[
\bigvee_{n < N} \PC_n
\]
as the collection of all classes of the equivalence relation $\Id_X \cup \bigcup_{n < N} E(\PC_n)$ that are contained in $\bigcup_{n < N} \dom(\PC_n)$; in particular, $\bigvee_{n < 0} \PC_n = \0$. When $N = \infty$, we say that the sequence $(\PC_n)_{n < \infty}$ \emph{stabilizes} if $\bigvee_{n < \infty} \PC_n \subseteq \bigcup_{n < \infty} \PC_n$.

\subsection{Packed tilings}

\begin{defn}\label{defn:packed}
	Let $p \in \R^+$.
	
	\begin{itemize}
		\item For a tiling $\PC \subseteq \FinrE$, call a set $A \in \FinrE$ a \emph{$p$-pack} over $\PC$ if $A$ is $E(\PC)$-invariant and $\coc\big(A \setminus \dom(\PC)\big) \ge p \cdot \coc\big(\dom(\PC) \cap A\big)$, equivalently, $\coc(A) \ge (1 + p) \cdot \coc\big(\dom(\PC) \cap A\big)$.
		
		\item For a collection $\SC \subseteq \FinrE$, call a tiling $\PC \subseteq \FinrE$ \emph{$p$-packed} within $\SC$ if $\SC$ has no $p$-pack over $\PC$.
		
		\item A sequence $(\PC_n)_{n \in \N}$ of tilings contained in $\FinrE$ is \textit{$p$-packing} if it is coherent and for all $n \in \N$, each tile in $\PC_n \setminus \left( \bigvee_{m < n} \PC_m \right)$ is a $p$-pack over $\bigvee_{m < n} \PC_m$.
	\end{itemize}
\end{defn}

\begin{obs}\label{packed_properties}
	Let $p \in \R^+$, $\SC \subseteq \FinrE$, and let $\PC \subseteq \FinrE$ be a $p$-packed tiling within $\SC$. 
	\begin{enumerate}[(a)]
	    \item\label{packed_properties:maximal} $\PC$ is maximal within $\SC$.
	    
	    \item\label{packed_properties:extensions} Any tiling $\PC' \subseteq \FinrE$ with $\PC' \ge \PC$ is still $p$-packed within $\SC$.
	\end{enumerate}
\end{obs}

\begin{lemma}\label{packing_stabilizes}
	For any $p \in \R^+$, any $p$-packing sequence $(\PC_n)$ of Borel tilings within $\FinrE$ stabilizes modulo $\coc$-deficient.
\end{lemma}
\begin{proof}
	We assume without loss of generality that $p \le 1$. We also assume without loss of generality that $(\PC_n)$ is extension-increasing by replacing each $\PC_n$ with $\bigvee_{m \le n} \PC_m$.
	
	Put $E_n \defeq E(\PC_n)$ for each $n \in \N$, and $E_\infty \defeq \bigincU_n E_n$. Letting $Z$ be the union of all $E_\infty$-classes $C$ with $C \notin \bigcup_n \PC_n$, it is enough to build a $\coc$-bounded Borel transport in $E_\infty \rest{Z}$ disbalancing $Z$. Thus, we assume without of generality that $X = Z$, so for every $n \in \N$, each $E_n$-class is strictly contained in an $E_\infty$-class.
	
	For each $n \in \N$, putting $D_n \defeq \dom(\PC_n)$ and $D_{-1} \defeq \0$, let $\PC_n' \defeq \set{P \in \PC_n : P \cap D_{n-1} = \0}$. The domains $D_n' \defeq \dom(\PC_n')$ are pairwise disjoint and the set $D' \defeq \bigcup_n D_n'$ is an $E_\infty$-complete section. We build a $\coc$-bounded transport $\phi$ in $E_\infty$ with no sinks but with $\Sources(\phi) = D'$, thus disbalancing $X$. 
	
	To get an intuitive idea, take $U \in \bigcup_n \PC_n'$ and let $n \in \N$ be the least such that $V \defeq [U]_{E_n} \supsetneq U$. The packing condition ensures that $V \setminus D_{n-1}$ has at least as much mass as the \onum{p} fraction of the mass of $V \cap D_{n-1}$, so we can transport the \onum{p} fraction of the mass of $U$ into $V \setminus D_{n-1}$. Maybe other $E_{n-1}$-classes in $V \cap D_{n-1}$ decide to do the same, but that is still alright since $V \setminus D_{n-1}$ has enough mass to accommodate the \onum{p} fraction of the mass of all of $V \cap D_{n-1}$. We repeat this with $V$ instead of $U$, to drain the sinks in $V$ into new points that join the equivalence class of $V$ in a later stage. Thus, sinks move to infinity, while $U$ stays a source.
	
	Formally, we recursively define a sequence $(\phi_n)$ of $\coc$-bounded Borel transports with pairwise disjoint domains, and take $\phi \defeq \sum_n \phi_n$. For each $n \ge 1$, putting $\phib_n \defeq \sum_{k < n} \phi_k$, we ensure that
	\begin{enumerate}[(i),widest=ii),series=packing_stabilizes]
		\item \label{packing_stabilizes:domain} $\dom(\phib_n) \subseteq E_n \cap \big(D_{n-1} \times (D_n \setminus D')\big)$;
		
		\item \label{packing_stabilizes:sources} $\Sources(\phib_n) = \bigcup_{k < n} D_k'$ and the net of $\phib_n$ on $\Sources(\phib_n)$ is constant $-p$;
		
		\item \label{packing_stabilizes:sinks} $\Sinks(\phib_n)$ is disjoint from $\bigcup_{k < n} U_k$, where $U_k \defeq X_k \cap D_k$ and
		\[
		X_k \defeq \set{x \in D_{k+1} : [x]_{E_{k+1}} \cap D_k \ne \0 \text{ and } [x]_{E_{k+1}} \setminus D_k \ne \0}.
		\]
	\end{enumerate}
	Condition \labelcref{packing_stabilizes:sources} guarantees that $\Sources(\phi) = D'$, and \labelcref{packing_stabilizes:sinks} implies that $\Sinks(\phi) = \0$ because $X = \bigcup_n U_n$ (each $E_n$-class is strictly contained in an $E_m$-class for some $m > n$). It remains to construct such a sequence $(\phi_n)$. 
	
	Fixing $n \in \N$, suppose that $\phib_n$ is defined and satisfies \labelcref{packing_stabilizes:domain}--\labelcref{packing_stabilizes:sinks}. Putting $V_n \defeq X_n \setminus D_n$, so $X_n = U_n \sqcup V_n$, we will define $\phi_n$ only on $E_{n+1} \cap (U_n \times V_n)$, so $\dom(\phi_n)$ is disjoint from $\dom(\phib_n)$ by \labelcref{packing_stabilizes:domain} for $\phib_n$. Moreover, because $V_n \cap D' = \0$, condition \labelcref{packing_stabilizes:domain} will also hold for $\phib_{n+1}$.
	
	By \labelcref{packing_stabilizes:domain} for $\phib_n$, $D_n'$ is disjoint from $\Sinks(\phib_n)$, so we define $\p_n : U_n \to [0, 1]$ by 
	\[
	\p_n(u) \defeq 
	\begin{cases}
	p & \text{if $u \in D_n'$}
	\\
	\infl[\phib_n](u) - \outfl[\phib_n](u) & \text{if $u \in \Sinks(\phib_n)$}
	\\
	0 & \text{otherwise}.
	\end{cases}
	\]
	
	\begin{claim*}
	$\int_{U_n \cap Y} \p_n d\coc \le \coc(V_n \cap Y)$ for each $E_{n+1}$-class $Y \subseteq X_n$.
	\end{claim*}
	\begin{pf}
	Fix an $E_n$-class $P \subseteq U_n$. If $P \cap D_n' \ne \0$, then $P \subseteq D_n'$ because $D_n'$ is $E_n$-invariant, and $\int_{P} \p_n d\coc = p \cdot \coc(P)$. If $P \cap D_n' = \0$, then
	\begin{align*}
	\int_P \p_n d\coc &= \int_{P \cap \Sinks(\phib_n)} (\infl[\phib_n] - \outfl[\phib_n]) \,d\coc
	\\
	\eqcomment{\cref{in-flow=out-flow}}
	&=
	- \int_{P \cap \Sources(\phib_n)} (\infl[\phib_n] - \outfl[\phib_n]) \,d\coc 
	\\
	\eqcomment{\labelcref{packing_stabilizes:sources} for $\phib_n$}
	&= p \cdot \coc(P \cap \Sources(\phib_n))
	\le 
	p \cdot \coc(P).
	\end{align*}
	Thus, for each $E_{n+1}$-class $Y \subseteq X_n$, $\int_{U_n \cap Y} \p_n d\coc \le p \cdot \coc(U_n \cap Y) \le \coc(V_n \cap Y)$, where the last inequality is because $Y$ is a $p$-pack over $\PC_n$. 
	\end{pf}
	Hence, \cref{flow-definer} applies to $F \defeq E_{n+1} \rest{X_n}$, $U \defeq U_n$, $V \defeq V_n$, and $\p \defeq \p_n$, yielding a transport $\phi_n$ with $\dom(\phi_n) \subseteq E_{n+1} \cap (U_n \times V_n)$ and $\outfl[\phi_n] \rest{U_n} \equiv \p_n$. In particular, $\Sources(\phib_{n+1}) = D_n' \cup \Sources(\phib_n)$, so \labelcref{packing_stabilizes:sources} is satisfied for $\phib_{n+1}$. Furthermore, $\Sinks(\phib_{n+1}) \cap X_n = \Sinks(\phi_n) \subseteq V_n$, so \labelcref{packing_stabilizes:sinks} holds too.
\end{proof}

\begin{theorem}\label{existence-of-packed}
	For any countable Borel equivalence relation $E$ on $X$, $p \in \R^+$, and a Borel $\SC \subseteq \FinE$, there is a Borel tiling $\PC \subseteq \SC$ that is $p$-packed within $\SC$ modulo $\coc$-deficient.
\end{theorem}
\begin{proof}
	By \cite{Kechris-Miller}*{Proof of Lemma 7.3}, the intersection graph on $\FinE$ admits a countable Borel coloring and we fix one so that the colors are natural numbers.
	
	We recursively build an extension-increasing $p$-packing sequence $(\PC_n)$ of Borel tilings contained in $\SC$. Take $\PC_0 \defeq \0$ and fixing $n \in \N$, suppose that $\PC_n$ is defined. Let $\PC_n'$ be the collection of all sets in $\SC$ of color $n$ that are $p$-packs over $\PC_n$ and let $\PC_{n+1} \defeq \PC_n' \cup (\PC_n \rest{X \setminus \dom(\PC_n')})$.
	
	By \cref{packing_stabilizes}, we may assume that the sequence $(\PC_n)$ stabilizes, so $\PC \defeq \bigvee_n \PC_n \subseteq \SC$ and it remains to show that $\PC$ is $p$-packed within $\SC$. Suppose towards a contradiction that $U \in \SC$ is a $p$-pack over $\PC$. Then $U$ is a $p$-pack over $\PC_n$ for every $n \in \N$. Letting $n$ be the color of $U$, the construction puts $U$ into $\PC_{n+1}$, a contradiction.
\end{proof}

\begin{lemma}\label{finitely-based=>more_packed}
	Let $\SC \subseteq \FinrE$ be a Borel collection with finitely based quotients and let $p \in \R^+$. For any Borel tiling $\PC \subseteq \SC' \defeq \SC \cap \FinE$, if $\PC$ is $p$-packed within $\SC'$, then $\PC$ is $p'$-packed within $\SC$ for any $p' > p$.
\end{lemma}
\begin{proof}
	Let $p' > p$ and suppose towards a contradiction that there is a $p'$-pack $A \in \SC$ over $\PC$. Taking $\e \in (0,1)$ such that $(1 - \e) (p' + 1) \ge (p + 1)$, the finite basedness of $\SCmod{E(\PC)}$ yields an $E(\PC)$-invariant set $A' \in \SC'$ such that $A' \subseteq A$ and $\coc(A') \ge (1 - \e) \coc(A)$. But then
	\[
	\coc(A') \ge (1 - \e) \cdot \coc(A) \ge (1 - \e) (p' + 1) \cdot \coc\big(A \cap \dom(\PC)\big) \ge (p + 1) \cdot \coc\big(A' \cap \dom(\PC)\big),
	\]
	so $A'$ is a $p$-pack over $\PC$, contradicting the $p$-packedness of $\PC$ within $\SC'$.
\end{proof}

\subsection{Approximately saturated tilings}

\begin{defn}
	\begin{itemize}
		\item For a tiling $\PC \subseteq \FinrE$, call a set $A \in \FinrE$ \emph{injective} over $\PC$ if $A$ is $E(\PC)$-invariant and contains at most one tile from $\PC$.
		
		\item A sequence $(\PC_n)$ of tilings is \textit{injective} if it is extension-increasing and for all $n \in \N$, each tile in $\PC_{n+1}$ is injective over $\PC_n$.
	\end{itemize}
\end{defn}

For a collection $\SC \subseteq \FinE$ and a tiling $\PC \subseteq \SC$, a notion of saturation was defined in \cite{Miller-Ts:erg_hyp_dec}*{Section 4.C} as follows: $\PC$ is \textit{saturated} within $\SC$ if there is no set in $\SC$ that is injective over $\PC$. It was also shown in \cite{Miller-Ts:erg_hyp_dec}*{Section 4.C} that if $\SC$ is Borel, then a saturated Borel tiling exists off of an $E$-invariant compressible Borel set. We would like to have the same statement, but with compressible replaced with $\coc$-deficient. However, this is not true in general: let $X \defeq \N$, $E \defeq \N^2$, $\coc_n(m) \defeq 2^{n-m}$, and $\SC \defeq \set{U \in \FinE : 0 \in U}$. Thus, we define an approximate notion of saturation for a given cocycle $\coc$, which coincides with the original notion when $\coc \equiv 1$, and prove existence modulo $\coc$-deficient.

To motivate this new definition, we rephrase the original definition: $\PC$ is saturated within $\SC$ if and only if $\PC \subseteq \SC$ and there is no $Q \in \FinE$ disjoint from $\dom(\PC)$ such that either $Q \in \SC$ or there is $x \in \dom(\PC)$ such that $[x]_{E(\PC)} \cup Q \in \SC$.

\begin{defn}\label{defn:saturated}
For a collection $\SC \subseteq \FinrE$, call a tiling $\PC \subseteq \FinrE$ \emph{approximately saturated} within $\SC$ if $\PC = \bigvee_n \PC_n$ for some injective sequence $(\PC_n)_{n \in \N}$ of tilings contained in $\SC$ with the following property: there is no $Q \in \FinrE$ disjoint from $\dom(P)$ such that either $Q \in \SC$ or there is $x \in \dom(\PC)$ such that $[x]_{E(\PC_n)} \cup Q \in \SC$ for all large enough $n \in \N$. We refer to $(\PC_n)$ as a \textit{saturating $\SC$-approximation} for $\PC$.
\end{defn}

When $\coc \equiv 1$, approximately saturated is indeed the same as saturated because any saturating $\SC$-approximation stabilizes.

\begin{obs}\label{saturated_properties}
Let $\PC \subseteq \FinrE$ be a tiling approximately saturated within $\SC \subseteq \FinrE$.
\begin{enumerate}[(a)]
    \item \label{saturated-are-max} $\PC$ is maximal within $\SC$.
    
    \item \label{saturated-contained-in-upward-closure} $\PC \subseteq \^\SC$.
\end{enumerate}
\end{obs}

To build approximately saturated tilings, we first need the following:

\begin{lemma}\label{injective_is_coc-finite}
	For an injective sequence $(\PC_n)$ of Borel tilings $\PC_n \subseteq \FinrE$, the $\coc$-infinite part of $E_\infty \defeq \bigincU_n E(\PC_n)$ is $\coc$-deficient.
\end{lemma}
\begin{proof}
    Letting $Z$ be the union of $\coc$-infinite $E_\infty$-classes, it is enough to build a $\coc$-bounded Borel transport in $E_\infty \rest{Z}$ disbalancing $Z$. Thus, we assume without of generality that $X = Z$, so $E_\infty$ is $\coc$-infinite. Let $E_n \defeq E(\PC_n)$ for each $n \in \N$.
    
    Let $D$ be the union of inclusion-minimal sets in $\bigcup_n \PC_n$; in particular, $[D]_{E_\w} = X$. We define $k_m : D \to \N$ by induction on $m$ as follows. For each $x \in D$, let $k_0(x)$ be the least $k \in \N$ with $[x]_{E_k} \in \PC_k$ and $[x]_{E_k} \subseteq D$. Supposing that $k_m(x)$ is defined, let $k_{m+1}(x)$ be the least $k > k_m(x)$ such that $\coc([x]_{E_k} \setminus [x]_{E_{k_m(x)}}) \ge \coc([x]_{E_{k_m(x)}})$. For each $m \ge 0$, $\QC_m \defeq \set{[x]_{E_{k_m(x)}} : x \in D}$ is a Borel tiling, and $(\QC_m)$ is an injective sequence. Moreover, for each $Q \in \QC_{m+1}$, $Q = [x]_{E_{k_{m+1}}(x)}$ for some $x \in D$ and
    \[
    \coc(Q \setminus \dom(\QC_m)) = \coc([x]_{E_{k_{m+1}}(x)} \setminus [x]_{E_{k_m(x)}}) \ge \coc([x]_{E_{k_m(x)}}) = \coc(Q \cap \dom(\QC_m)),
    \]
    so $Q$ is a $1$-pack over $\QC_m$. Thus, $(\QC_m)$ is a $1$-packing sequence, so by \cref{packing_stabilizes}, $(\QC_m)$ stabilizes modulo $\coc$-deficient. But for each $x \in D$, $\bigcup_m [x]_{E(\QC_m)} = [x]_{E_\infty}$ is $\coc$-infinite, so $[D]_{E_\infty}$ is $\coc$-deficient, and $X = [D]_{E_\infty}$.
\end{proof}

When building an approximately saturated tiling (in \cref{existence-of-saturated}), we need label-maximizing Borel maximal matchings in labeled Borel bipartite graphs. We prove their existence now:

\begin{lemma}\label{maximal_labeled_matchings}
Let $Y,Z$ be disjoint standard Borel spaces, and let $H \subseteq Y \times Z$ be a locally countable directed Borel graph and $\ell : H \to \R^+$ be a Borel function. There is a Borel maximal matching\footnote{A set of edges $M$ is a \textit{matching} if no two edges are adjacent (ignoring the direction of the edges).} $M \subseteq H$ such that for each edge $(y,z) \in M$,
\[
\ell(y,z) \ge 1 \straighttext{ or } \ell(y,z) \ge \frac{1}{2} \sup_{z' \in H_y \setminus Z_M} \ell(y,z'), 
\]
where $Z_M$ is the set of $M$-matched\footnote{A vertex is \textit{$M$-matched} if there is an edge in $M$ incident to it.} points in $Z$.
\end{lemma}
\begin{proof}
    Because $E_H$ is a countable Borel equivalence relation on $Y \cup Z$, \cite{Kechris-Miller}*{Lemma 7.3} implies that $H$, as well as its Borel subgraphs, admit Borel maximal matchings. We recursively define a sequence $(M_n)_{n \in \N}$ of pairwise \textit{vertex-disjoint} Borel matchings, i.e. no vertex in $Y \cup Z$ is $M_k$-matched and $M_n$-matched for $k \ne n$, so the union $M \defeq \bigcup_n M_n$ is a Borel matching.
    
    Let $H_0 \defeq \set{(y,z) \in H : \ell(y,z) \ge 1}$, and let $M_0 \subseteq H_0$ be a Borel maximal matching. Suppose that the matchings $M_k$, $0 \le k < n$, are defined and are pairwise vertex-disjoint, so $\^M_n \defeq \bigcup_{k<n} M_k$ is a matching, and let $Y_{\^M_n},Z_{\^M_n}$ denote the subsets of $Y$, $Z$, respectively, of all points that are $\^M_n$-matched. Let
    \[
    H_n \defeq \set{(y,z) \in H \cap (Y \setminus Y_{\^M_n}) \times (Z \setminus Z_{\^M_n}) : \ell(y,z) \ge \frac{1}{2} \sup\set{\ell(y,z') : z' \in H_y \setminus Z_{\^M_n}}},
    \]
    and let $M_n \subseteq H_n$ be a Borel maximal matching. This completes the construction of $(M_n)$. 
    It is immediate from the definitions that for any $(y,z) \in M$, either $(y,z) \in M_0$, so $\ell(y,z) \ge 1$, or $(y,z) \in M_n$ for some $n \ge 1$, so $\ell(y,z) \ge \frac{1}{2} \sup_{z' \in H_y \setminus Z_{\^M_n}} \ell(y,z') \ge \frac{1}{2} \sup_{z' \in H_y \setminus Z_M} \ell(y,z')$. 
    
    It remains to show that $M$ is maximal. We take a $y \in Y$ that it is not $M$-matched and aim to show that $H_y \setminus Z_M = \0$, where $H_y \defeq \set{z' \in Z : (y,z') \in H}$. By the maximality of $M_0$, $s_0 \defeq \sup_{z \in H_y \setminus Z_{\^M_1}} \ell(y,z) \le 1$. Then the maximality of each $M_k$ implies, by induction on $n$, that $s_n \defeq \sup\set{\ell(y,z) : z \in Z \setminus Z_{\^M_{n+1}}} \le \frac{1}{2} s_{n-1} \le 2^{-n}$ for each $n \ge 1$. Because $H_y \setminus Z_M = \bigcup_{n \ge 1} (H_y \setminus Z_{\^M_n})$, $\sup_{z \in H_y \setminus Z_M} \ell(y,z) \le \inf_{n \ge 0} s_n = 0$, so $H_y \setminus Z_M = \0$.
\end{proof}

\begin{theorem}\label{existence-of-saturated}
    Any Borel $\SC \subseteq \FinE$ admits a Borel tiling $\PC \subseteq \^\SC$ approximately saturated within $\SC$ modulo $\coc$-deficient. In fact, given a Borel tiling $\PC_0 \subseteq \SC$, the tiling $\PC$ can be chosen so that it admits a saturating $\SC$-approximation starting with $\PC_0$.
\end{theorem}
\begin{proof}
	Fix a Borel coloring of the intersection graph on $\FinE$ with natural numbers \cite{Kechris-Miller}*{Proof of Lemma 7.3}, and let $(k_n)$ be a sequence of natural numbers in which each (color) $k \in \N$ appears infinitely-many times.
	
	Having $\PC_0$ given (otherwise, take $\PC_0 \defeq \0$), we recursively build an injective sequence $(\PC_n)_{n \ge 1}$ of Borel tilings contained in $\SC$. Fixing $n \in \N$, suppose that $\PC_n$ is defined. Let 
	
	\begin{itemize}
	    \item $\SC_n^\perp \defeq $ the collection of all sets in $\SC$ of color $k_n$ that are disjoint from $\dom(\PC_n)$;
	    
	    \item $\QC_n \defeq$ the collection of all $Q \in \FinE \setminus \SC$ of color $k_n$ that are disjoint from $\dom(\PC_n)$.
	\end{itemize}
	
	Let $\GF_n \defeq \set{(P,Q) \in \PC_n \times \QC_n : P \cup Q \in \SC}$. This graph is locally countable because $P \cup Q \in \SC$ implies that $[P]_E = [Q]_E$, and it is Borel (using Luzin--Novikov uniformization), so by \cref{maximal_labeled_matchings} applied to $\GF_n$ with $\ell(P,Q) \defeq \frac{\coc(Q)}{\coc(P)}$, there is a Borel maximal matching $\MF_n \subseteq \GF_n$ such that, for each $(P,Q) \in \MF_n$,
	\begin{equation}\label{eq:max-label}
    \coc(Q) \ge \coc(P) \straighttext{ or } \coc(Q) \ge \frac{1}{2} \sup\set{ \coc(Q') : Q' \in (\GF_n)_P \cap \QC_n'},
    \end{equation}
    where $\QC_n'$ denotes set of all $\MF_n$-unmatched vertices in $\QC_n$. Note that because $\PC_n, \QC_n$ are tilings with disjoint domains and $\MF_n$ is a matching, $\MC_n \defeq \set{P \cup Q : (P,Q) \in \MF_n}$ is a tiling as well. Take
	$
	\PC_{n+1} \defeq \SC_n^\perp \cup \MC_n \cup (\PC_n \rest{D}),
	$
	where $D \defeq \dom(\PC_n) \setminus \dom(\SC_n^\perp \cup \MC_n)$.
	
	Applying \cref{injective_is_coc-finite} to $(\PC_n)$ and ignoring a $\coc$-deficient $E$-invariant Borel set, we assume that all the tiles in $\PC \defeq \bigvee_n \PC_n$ are $\coc$-finite. Thus, $\PC \subseteq \^\SC$.
	
	It remains to show that $(\PC_n)$ is a saturating $\SC$-approximation for $\PC$. Let $Q' \in \FinE$ be disjoint from $\dom(\PC)$. If $Q' \in \SC$, then for any $n \in \N$ such that $k_n$ is the color of $Q'$, the construction would put $Q'$ in $\PC_{n+1}$, contradicting $Q' \cap \dom(\PC) = \0$. Thus, $Q' \notin \SC$. Suppose towards a contradiction that there is $x \in X$ such that $P_n \cup Q' \in \SC$ for all $n \ge N$, for some $N \in \N$, where $P_n \defeq [x]_{E(\PC_n)}$. Let $n \ge N$ be large enough so that
	\begin{equation}\label{eq:saturated-choice-of-n}
	\coc([x]_{E(\PC)} \setminus P_n) < \min \set{\coc(P_n), \tfrac{1}{2}\coc(Q')},
	\end{equation}
	and moreover, choose $n$ so that $k_n$ is the color of $Q'$. Then $(P_n,Q') \in \GF_n$ and yet $Q'$ is not $\MF_n$-matched because all $\MF_n$-matched tiles are contained in $\dom(\PC_{n+1})$, while $Q' \cap \dom(\PC_{n+1}) = \0$. By the maximality of the matching $\MF_n$, $P_n$ must be $\MF_n$-matched with some $Q \in \QC_n$. By \labelcref{eq:max-label}, $\coc(Q) \ge \coc(P_n)$ or $\coc(Q) \ge \frac{1}{2} \coc(Q')$. But $[x]_{E(\PC)} \setminus P_n \supseteq Q$, so $\coc([x]_{E(\PC)} \setminus P_n) \ge \coc(P_n)$ or $\coc([x]_{E(\PC)} \setminus P_n) \ge \tfrac{1}{2}\coc(Q')$, contradicting \labelcref{eq:saturated-choice-of-n}.
\end{proof}

\subsection{Packed and approximately saturated tilings}

\begin{prop}\label{packed-and-saturating-approximation}
	For any Borel collection $\SC \subseteq \FinE$ and $p \in \R^+$, there is a Borel tiling $\PC \subseteq \^\SC$ such that, modulo $\coc$-deficient, it is approximately saturated within $\SC$ and moreover, admits a saturating $\SC$-approximation of $p$-packed tilings within $\SC$.
\end{prop}
\begin{proof}
    Applying \cref{existence-of-packed} and throwing out an $\coc$-deficient set, we get a Borel tiling $\PC_0 \subseteq \SC$ $p$-packed within $\SC$. In particular, any extension $\PC_0' \subseteq \SC$ of $\PC_0$ is $p$-packed within $\SC$ by \cref{packed_properties}\labelcref{packed_properties:extensions}. Applying \cref{existence-of-saturated} to $\SC$ with the initial tiling $\PC_0$ gives a Borel tiling $\PC \subseteq \^\SC$ with a desired saturating $\SC$-approximation.
\end{proof}

\begin{theorem}\label{existence-of-packed-and-saturated}
For any Borel collection $\SC \subseteq \FinrE$ with finitely based quotients and $p \in \R^+$, there is a Borel tiling $\PC \subseteq \^\SC$ that is $p$-packed within $\SC$ and approximately saturated within $\SC' \defeq \SC \cap \FinE$, modulo $\coc$-deficient.
\end{theorem}
\begin{proof}
Applying \cref{packed-and-saturating-approximation} to $\SC'$ and $\frac{p}{2}$, we get a Borel tiling $\PC \subseteq \^{\SC'}$ with a saturating $\SC'$-approximation $(\PC_n)$ such that $\PC_0$ is $\frac{p}{2}$-packed within $\SC'$. Because $\SC$ has finitely based quotients, \cref{finitely-based=>more_packed} implies that $\PC_0$ is $p$-packed within $\SC$. By \cref{packed_properties}\labelcref{packed_properties:extensions}, same is true for $\PC$ because $\PC \ge \PC_0$.
\end{proof}

\section{Cocycled graph visibility}\label{sec:cocycle_visibility}

Throughout this section, let $G$ be a locally countable Borel graph on a standard Borel space $X$ and let $\coc : E_G \to \R^+$ be a Borel cocycle.

\subsection{Definitions and basic properties}

In our proofs, we would like to obtain $G$-connected $\coc$-finite sets of arbitrarily large \cocratio. The following notion helps replace $\cocmax$ with $\coc$.

\begin{defn}
	Let $x \in X$.
	\begin{itemize}
		\item A \emph{$(G,\coc)$-visible neighborhood} of $x$ is any $G$-connected set $V \ni x$ such that $\coc(x) \ge \coc(v)$ for each $v \in V$.
		
		\item A point $y \in X$ is said to be \emph{$(G, \coc)$-visible} from $x$, denoted by $y \viscocG x$, if $x$ admits a $(G,\coc)$-visible neighborhood containing $y$. (The intuition comes from thinking of $\coc$ as relative heights, so $y$ is visible from $x$ if on some path from $x$ to $y$, no point is taller than $x$.)
		
		\item We refer to the sets $(\viscocG)^x \defeq \set{y \in X : y \viscocG x}$ and $(\viscocG)_x \defeq \set{y \in X : x \viscocG y}$ as the \textit{$\viscocG$-downward} and \textit{$\viscocG$-upward cones} of $x$.
		
		\item Call a set $C$ a \textit{$\viscocG$-downward} (resp. \textit{upward}) \textit{cone} if $C$ is a $\viscocG$-downward (resp. upward) cone of some $x \in X$.
				
		\item We say that $G$ has \emph{finite $\coc$-visibility} if every $\viscocG$-downward cone is $\coc$-finite.
	\end{itemize}
	
	\noindent We omit $G, \coc$ from the notation if they are understood from the context. Similarly, we just say downward (resp. upward) cone.
\end{defn}

\begin{prop}\label{vis:properties}
	Let $G, \coc$ be as above, and let $x,y \in X$ be points in the same $G$-connected component.
	\begin{enumerate}[(a)]
	    \item \label{item:vis:transitive}\emph{Partial quasi-order:} $\vis$ is a transitive relation (a partial quasi-order).
	    
	    \item \label{item:vis:amalgamation} \emph{Amalgamation:} There is $z \in X$ such that $x \vis z$ and $y \vis z$. In other words, any two downward cones in the same $G$-connected component are contained in a downward cone.
	    
		\item \label{item:vis:linear_on_upward_cones} \emph{Linearity on upward cones:} The quasi-order $\vis$ is linear on each upward cone, i.e. if $x \vis y$ and $x \vis z$, then $y \vis z$ or $z \vis y$.
		
		\begin{comment}
		\item \label{item:vis:nested_or_disjoint} \emph{Coherence of downward cones:} Any two downward cones are either nested (i.e. one is contained in the other) or disjoint.
		\end{comment}
		
		\item \label{item:vis:containing_coc-finite} \emph{Covering with downward cones:} Every $U \in \FinrG$ is contained in a downward cone.
		
		\item \label{item:vis:upward-cone-cofinal} \emph{Cofinality of upward cones:} For each $x \in X$, $\sup_{y \in [x]_{E_G}} \coc(y) = \sup_{z \sees x} \coc(z)$.
	\end{enumerate}
\end{prop}
\begin{proof}
    \labelcref{item:vis:transitive} is immediate. 
    For \labelcref{item:vis:amalgamation}, letting $P$ be a $G$-path from $x$ to $y$, we see that $x$ and $y$ are visible from any $z \in \Maxr P$. 
    For \labelcref{item:vis:linear_on_upward_cones}, if $x \vis y$ and $x \vis z$ then $\coc(y) \le \coc(z)$ implies $y \vis z$. 
    %
    \begin{comment}
    Part \labelcref{item:vis:nested_or_disjoint} is just a rephrasing of \labelcref{item:vis:linear_on_upward_cones} because $\dcone{y} \cap \dcone{z} \ne \0 \shortiff y,z \sees x$ for some $x$, and $\dcone{y} \subseteq \dcone{z} \shortiff y \vis z$.
    \end{comment}
    %
    For \labelcref{item:vis:containing_coc-finite}, because $\coc(U) < \infty$, $\Maxr U$ is nonempty by \cref{Maxr_nonempty_finite}, so $U \subseteq \dcone{x}$ for any $x \in \Maxr U$. 
    For \labelcref{item:vis:upward-cone-cofinal}, note that for any $y \in [x]_{E_G}$, \labelcref{item:vis:amalgamation} gives a $z \sees x,y$, in particular, $\coc(z) \ge \coc(y)$.
\end{proof}

\subsection{Finite visibility and hyperfiniteness}

Here, we prove the following sufficient condition for hyperfiniteness in terms of a cocycle mentioned in the introduction:

\begin{namedthm*}{\cref{finite_visibility=>hyperfinite}}
	Let $G$ be a locally countable Borel graph on a standard Borel space $X$ and let $\coc : E_G \to \R^+$ be a Borel cocycle. If $G$ has finite $\coc$-visibility, then it is hyperfinite.
\end{namedthm*}
\begin{proof}
This proof was significantly simplified by the suggestion of an anonymous referee.

Suppose that $G$ has finite $\coc$-visibility. By \cref{coc-finite_is_smooth}, the $\coc$-finite part of $E_G$ is smooth, hence hyperfinite, so we assume without loss of generality that $E_G$ is $\coc$-infinite.

\begin{claimlemma}\label{claim:upward_cone_infinite}
    Every upward cone is $\coc$-infinite.
\end{claimlemma}
\begin{pf}
    Suppose towards a contradiction that $U \defeq \ucone{x}$ is $\coc$-finite for some $x \in X$, so $\Maxr U \ne \0$, by \cref{Maxr_nonempty_finite}. But then \cref{vis:properties}\labelcref{item:vis:upward-cone-cofinal} implies that every point in $[x]_{E_G}$ is visible from any $y \in \Maxr U$, contradicting finite visibility because $[x]_{E_G}$ is $\coc$-infinite.
\end{pf}

\begin{claimlemma}\label{claim:Min_nonempty_finite}
    $\vis$ is a well-order on any upward cone $U$. Moreover, for any nonempty $A \subseteq U$, $\Minr A$ is nonempty and finite.
\end{claimlemma}
\begin{pf}
By \cref{vis:properties}\labelcref{item:vis:linear_on_upward_cones}, any two points in $\Minr A$ are visible to each other. Thus, finite visibility implies that $\Minr A$ is $\coc$-finite, and hence finite.

Suppose towards a contradiction that $\Minr A = \0$, so there is a sequence $(x_n) \subseteq A$ such that $\coc(x_n) > \coc(x_{n+1})$. This implies that $x_n \sees x_{n+1}$ by the linearity of $\sees$ on $U$ (\cref{vis:properties}\labelcref{item:vis:linear_on_upward_cones}). But $U = \ucone{x}$ for some $x \in X$, so $\coc(x_n) \ge \coc(x)$ for all $n$, and every $x_n$ is visible from $x_0$, contradicting finite visibility (at $x_0$).
\end{pf}

For each $x \in X$, put $C^+(x) \defeq \ucone{x} \setminus \Minr \ucone{x}$. \cref{claim:Min_nonempty_finite,claim:upward_cone_infinite} imply that $C^+(x) \ne \0$, so fixing a Borel linear order $<$ on $X$, we define a transformation $T : X \to X$ by $x \mapsto$ the $<$-least element of the set $\Minr C^+(x)$, which exists by \cref{claim:Min_nonempty_finite}. Clearly, $x \vis T(x)$ and $\coc(x) < \coc(T(x))$.

\begin{claimlemma}
    For any $E$-related $x,y \in X$, there are $n,m \in \N$ such that $T^n(x) = T^m(y)$.
\end{claimlemma}
\begin{pf}
    By the amalgamation property (\cref{vis:properties}\labelcref{item:vis:amalgamation}), it is enough to show the statement assuming $x \vis y$. By finite visibility (at $y$), there are only finitely-many points $z \in \ucone{x}$ with $\coc(z) \le \coc(y)$. Because the sequence $(\coc_x(T^k(x)))_{k \in \N}$ is strictly increasing and $(T^k(x))_{k \in \N} \subseteq \ucone{x}$, there is a least $n \in \N$ such that $\coc(T^n(x)) > \coc(y)$. Then $n \ge 1$ and $\coc(T^{n-1}(x)) = \coc(y)$, so $T^{n-1}(x)$ and $y$ are visible to each other by \cref{vis:properties}\labelcref{item:vis:linear_on_upward_cones}. But then the upward cones of $T^{n-1}(x)$ and $y$ coincide, so $T^n(x) = T(y)$.
\end{pf}

Thus, the orbit equivalence relation of the (semigroup) action of $T$ on $X$ is exactly $E_G$, so by \cite{DJK}*{Corollary 8.2}, $E_G$ is hyperfinite.
\end{proof}

\subsection{Tiling with sets of large \cocratio}

As before, throughout this subsection, we let $G$ be a locally countable Borel graph on a standard Borel space $X$ and let $\coc : E_G \to \R^+$ be a Borel cocycle. Moreover, we let $\mu$ be a $\coc$-invariant Borel probability measure on $X$. 

The goal of this subsection (\cref{tiling_with_large_visibility}) is to build a Borel tiling $\PC$ with a large domain whose tiles are of large \cocratio and perhaps satisfy other properties. This is not hard in the pmp setting, i.e. when \cocratio is just cardinality; indeed, taking a saturated tiling works. However, typically \cocratio is not equal to cardinality and it is not monotone (under subsets), which makes such a tiling harder to build.

We say that $V \subseteq X$ is a \textit{$(G,\coc)$-visible neighborhood} of $U \in \FinrG$ if $V$ is $G$-connected, contains $U$, and $\coc(U) \ge \coc(v)$ for each $v \in V$. In this case, for a $\coc$-finite $V$,
\begin{equation}\label{eq:vis-nbhd_cocmax=coc}
    \frac{\coc(V)}{\coc(U)} \le \cocmax(V) \le \cocmax(U) \frac{\coc(V)}{\coc(U)}.
\end{equation}
Indeed, $\cocmax(V) = \frac{\coc(V)}{\maxr V}$ and $\frac{\coc(V)}{\coc(U)} \le \frac{\coc(V)}{\maxr V} \le \frac{\coc(V)}{\maxr U} = \cocmax(U) \frac{\coc(V)}{\coc(U)}$.

The following captures the kind of properties we will allow the tiles to satisfy.

\begin{defn}\label{defn:conic}
Call $\SC \subseteq \FinrG$ \textit{$(G,\coc)$-conic} (resp. \textit{finitely $(G,\coc)$-conic}) if for each $U \in \FinrG$ (resp. $U \in \FinG$), all of its $\coc$-large enough $\coc$-finite (resp. finite) $(G,\coc)$-visible neighborhoods are in $\SC$. By \labelcref{eq:vis-nbhd_cocmax=coc}, ``$\coc$-large enough'' can be replaced with ``large enough \cocratio''.
\end{defn}

Note that $(G,\coc)$-conic implies finitely $(G,\coc)$-conic.

\begin{example}
For any $L > 0$, the collection of all $V \in \FinrG$ (resp. $V \in \FinG$) with $\cocmax(V) > L$ is $(G,\coc)$-conic (resp. finitely $(G,\coc)$-conic).
\end{example}

\begin{lemma}\label{maximal_in_conic_is_hyperfinitizing}
    If a Borel tiling $\PC \subseteq \FinrG$ is maximal within a finitely $(G,\coc)$-conic $\SC \subseteq \FinG$, then $G_{- \dom(\PC)}$ has finite $\coc$-visibility. In particular, $\dom(\PC)$ is a hyperfinitizing vertex-cut for $G$.
\end{lemma}
\begin{proof}
The last part is due to \cref{finite_visibility=>hyperfinite}. As for finite $\coc$-visibility, suppose towards a contradiction that there is an $x \in X \setminus \dom(\PC)$ that admits arbitrarily $\coc$-large finite $(G,\coc)$-visible neighborhoods $V \subseteq X \setminus D$. Then a $\coc$-large enough such $V$ belongs to $\SC$, contradicting the maximality of $\PC$.
\end{proof}

The following is the reason why we use $(G,\coc)$-conic and not just finitely $(G,\coc)$-conic.

\begin{prop}\label{quotient_of_conic}
Let $F$ be a $G$-connected $\coc$-finite Borel equivalence relation. If $\SC \subseteq \FinrG$ is $(G,\coc)$-conic, then $\SCmod{F}$ is $(\Gmod{F}, \rmod{F})$-conic.
\end{prop}
\begin{proof}
For any $U \in [\Xmod{F}]^{\rmod{F} < \infty}_{\Gmod{F}}$, the $\pi_F$-preimage of any $(\Gmod{F},\rmod{F})$-visible neighborhood $V$ of $U$ is a $(G,\coc)$-visible neighborhood of $\pi^{-1}(U)$ and $\frac{\coc(\pi_F^{-1}(V))}{\coc(\pi_F^{-1}(U))} = \frac{\rmod{F}(V)}{\rmod{F}(U)}$.
\end{proof}

We are now ready to prove the main result of this subsection.

\begin{lemma}[$\mu$-co-$\e$ tiling]\label{tiling_with_large_visibility}
    If $G$ is $\mu$-nowhere hyperfinite, then for any $(G,\coc)$-conic $\SC \subseteq \FinrG$ and $\e > 0$, there is a Borel tiling $\PC \subseteq \SC$ with a $\mu$-co-$\e$ domain.
\end{lemma}
\begin{proof}
For each $U \in \FinrG$, let $r(U)$ be the infimum of all $r > 0$ such that any $(G,\coc)$-visible neighborhood $V$ of $U$ with $\cocmax(V) > r$ belongs to $\SC$; write $r(x)$ if $U = \set{x}$. For any $G$-connected $\coc$-finite Borel equivalence relation $F$ and $L > 0$, let $\SCmod{F}'(L)$ denote the collection of all finite $P \in \SCmod{F}$ with $\cocmaxF(P) > \max \set{L, r(U)}$ for all $U \in \MaxrF P$. We just write $\SC'(L)$ if $F = \Id_X$.

Note that for any $x' \in \Xmod{F}$, any $(\Gmod{F}, \rmod{F})$-visible neighborhood $V$ of $x'$ with $\cocmaxF(V) > r(x')$ belongs to $\SCmod{F}$ because $\cocmax(\pi_F^{-1}(V)) \ge \cocmaxF(V)$ (\cref{coc-quotient}). We call this the \textit{main property} of $r$. This and the fact that \cocratio[cocycle] can only decrease in the quotient (\cref{coc-quotient}) allow us below to work mod $F$ and assume without loss of generality that $F = \Id_X$.

\begin{claimlemma}\label{SCmod_is_finitely_conic}
$\SCmod{F}'(L)$ is finitely $(\Gmod{F}, \rmod{F})$-conic.
\end{claimlemma}
\begin{pf}
Follows from \cref{quotient_of_conic} and the fact that the additional condition in the definition of $\SCmod{F}'(L)$ is a lower bound on \cocratio[$\rmod{F}$].
\end{pf}

\begin{claimlemma}\label{closure_of_SC}
$\^{\SCmod{F}'(L)} \subseteq \SCmod{F}$.
\end{claimlemma}
\begin{pf}
We work mod $F$, so assume without loss of generality that $F = \Id_X$. Let $Q_m \in \SC'(L)$ be an increasing sequence of sets such that $Q = \bigcup_m Q_m$ is still $\coc$-finite. Then by \cref{incU_finite=>increasing_ratio}, there is $m$ such that $\Maxr Q_m = \Maxr Q$ and hence $\cocmax(Q) \ge \cocmax(Q_m) > r(x)$ for all $x \in \Maxr Q$, so $Q \in \SC$ by the main property of $r$.
\end{pf}

Let $(L_n)_{n \ge 0}$ be an increasing unbounded sequence of positive reals. We recursively define a coherent sequence $(\PC_n)_{n \ge 0}$ of Borel tilings contained in $\FinrG$; in particular, the equivalence relations $F_n \defeq \bigcup_{k<n} E(\PC_k)$ are increasing, where $F_0 \defeq \Id_X$. For $n \ge 0$, suppose that $F_n$ is defined. \cref{existence-of-saturated} applied to $\SCmod{F_n}'(L_n)$ gives a Borel tiling $\QC \subseteq \^{\SCmod{F_n}'(L_n)}$ approximately saturated within $\SCmod{F_n}'(L_n)$ modulo $\rmod{F_n}$-deficient. Putting $\PC_n \defeq \set{\pi_{F_n}^{-1}(Q) : Q \in \QC}$ finishes the construction. 

By \cref{deficient=>no_prob_meas}, if an $\Emod[(E_G)]{F_n}$-invariant Borel $A_n \subseteq \Xmod{F_n}$ is $\rmod{F_n}$-deficient, then it is $\mumod{F_n}$-null, so $\pi_{F_n}^{-1}(A_n)$ is $\mu$-null. Thus, discarding countably-many $E_G$-invariant $\mu$-null sets from $X$, we have that for each $n \ge 0$, $\submod{(\PC_n)}{F_n} \subseteq \^{\SCmod{F_n}'(L_n)}$ is approximately saturated within $\SCmod{F_n}'(L_n)$. By \cref{closure_of_SC}, $\submod{(\PC_n)}{F_n} \subseteq \SCmod{F_n}$, so $\PC_n \subseteq \SC$.

\begin{claimlemma}\label{Dn_are_hyperfinitizing}
	For each $n \in \N$, $D_n \defeq \dom(\PC_n)$ is a hyperfinitizing vertex-cut for $G$.
\end{claimlemma}
\begin{pf}
Because $\submod{(\PC_n)}{F_n}$ is maximal within $\SCmod{F_n}'(L_n)$ (\cref{saturated_properties}\labelcref{saturated-are-max}) and $\SCmod{F_n}'(L_n)$ is finitely $(\Gmod{F},\rmod{F})$-conic (\cref{SCmod_is_finitely_conic}), \cref{maximal_in_conic_is_hyperfinitizing} implies that $\pi_F(D_n)$\allowbreak$= \dom(\submod{(\PC_n)}{F_n})$ is a hyperfinitizing vertex-cut for $\Gmod{F}$, i.e. $(\Gmod{F})_{- \pi_F(D_n)}$ is hyperfinite. But then $G_{-D_n}$ is hyperfinite because $\pi_F$ is a Borel reduction $E_G \to \Emod[(E_G)]{F}$ and hyperfinitness pulls back under Borel reductions \cite{JKL}*{1.3(ii)}.
\end{pf}

Put $D_\w \defeq \limsup_n D_n \defeq \set{x \in X : x \in D_n \text{ for infinitely many $n \in \N$}}$ and $F_\w \defeq \bigcup_n F_n$.

\begin{claimlemma}\label{Dw_invariant}
	$D_\w$ is $E_G$-invariant.
\end{claimlemma}
\begin{pf}
	Suppose not, so there are $G$-adjacent points $x \in D_\w$ and $y \in X \setminus D_\w$. Let $N_0 \in \N$ be large enough so that $y \notin D_n$ for all $n \ge N_0$; hence $[y]_{F_\w} = [y]_{F_n}$ for all $n \ge N_0$. Because $[x]_{F_\w}$ is $\coc$-infinite (by \cref{increasing_ratio=>infinite}), there is $N_1 \ge N_0$ such that $\coc([x]_{F_n}) > \coc([y]_{F_\w})$ for all $n \ge N_1$. Finally, take $n \ge N_1$ so that $[x]_{F_{n+1}} \in \PC_n$. We will contradict that $\QC \defeq \submod{(\PC_n)}{F_n}$ is approximately saturated within $\SCmod{F_n}'(L_n)$. 
	
	Putting $x' \defeq [x]_{F_n}$ and $y' \defeq [y]_{F_n} = [y]_{F_\w}$, so $\coc(x') > \coc(y')$, and working mod $F_n$, we assume without loss of generality that $F_n = \Id_X$ and drop $F_n$ from the notation. 
	
	Let $(\QC_m)$ be a saturating $\SC'(L_n)$-approximation for $\QC$, and put $Q \defeq [x']_{E(\QC)}$ and $Q_m \defeq [x']_{E(\QC_m)}$, so $Q = \bigincU_m Q_m$. Fix any $k \in \N$. Then $Q_k' \defeq Q_k \cup \set{y'}$ is $G$-connected and finite, and $\maxr Q_k \ge \coc(x') > \coc(y')$, so $\Maxr (Q_k') = \Maxr Q_k$. In particular, $\cocmax(Q_k') = \frac{\coc(Q_k) + \coc(y')}{\maxr Q_k} > \cocmax(Q_k) > \max \set{L_n, r(x_k)}$ for all $x_k \in \Maxr Q_k' = \Maxr Q_k$ because $Q_k \in \SC'(L_n)$. But then the main property of $r$ implies that $Q_k' \in \SC$, and hence $Q_k' \in \SC'(L_n)$, contradicting that $(\QC_m)$ is a saturating $\SC'(L_n)$-approximation.
\end{pf}

\begin{claimlemma}
$D_\infty$ is $\mu$-conull.
\end{claimlemma}
\begin{pf}
Let $X' \defeq X \setminus D_\infty$ and suppose that it is not null. Because $X'$ is $G$-invariant and $G$ is $\mu$-nowhere hyperfinite, $G \rest {X'}$ is not $\mu$-hyperfinite, so $p \defeq \hvp(G \rest {X'}) > 0$. By \cref{Dn_are_hyperfinitizing}, $D_n \cap X'$ is a hyperfinitizing cut for $G \rest {X'}$, so $\mu(D_n \cap X') \ge p$ for all $n \in \N$. But then $D_\infty \cap X' = \bigcap_n \bigcup_{m \ge n} (D_m \cap X')$ also has measure at least $p$ by the downward continuity of $\mu$, contradicting $D_\infty \cap X' = \0$.
\end{pf}

In particular, $\bigcup_{n \in \N} D_n$ is conull, so for a large enough $N \in \N$, $D \defeq \bigcup_{n \le N} D_n$ is $\mu$-co-$\e$. Then the set $\PC$ of all $F_N$-classes contained in $D$ is contained in $\bigcup_{n \le N} \PC_n \subseteq \SC$ and $\dom(\PC) = D$ is $\mu$-co-$\e$.
\end{proof}

\section{Visible asymptotic averages on a graph}\label{sec:asymptotic_averages}

If \cref{ptwise_ergodic_for_graphs} is indeed true, then for any $f \in L^\infty(X,\mu)$ and a.e. $x \in X$, there must be arbitrarily $\coc$-large finite $G$-connected sets containing $x$ over which the $\coc$-average of $f$ is arbitrarily close to $\E(f | \BC_{E_G})(x)$. Motivated by this, we look at the set of all reals in general that are achievable in this manner, thus defining a new invariant developed in this section.

Throughout this section, let $G$ be a locally countable (abstract) graph on a set $X$ and let $\coc : E_G \to \R^+$ a cocycle. We also let $f : X \to \R$ be a bounded function.

\subsection{For an abstract graph}

\cite{Miller-Ts:erg_hyp_dec}*{Definition 8.2} introduces the set $\Mean{w}{G} f(x)$ of asymptotic $w$-weighted means along $G$ in the $G$-connected component $[x]_{E_G}$, where $w : X \to \R^+$ is a weight-function. This set is independent of the representative $x$ of the $G$-connected component \cite{Miller-Ts:erg_hyp_dec}*{Proposition 8.3} and it is a closed interval when $w$ is a bounded function \cite{Miller-Ts:erg_hyp_dec}*{Proposition 8.5}. Here, we generalize this definition to arbitrary cocycles on $E_G$.

\begin{defn}\label{defn:cone-averages}
	For a $G$-connected set $C$ and $x \in C$, we call $r \in \R$ a \emph{$(G,\coc)$-asymptotic average} of $f$ at $x$ over $C$ if there are arbitrarily $\coc$-large finite $G$-connected sets $V \subseteq C$ containing $x$ with $\meanrf{V}$ arbitrarily close to $r$; more precisely, for every $\e > 0$ and $L > 0$, there is a finite $G$-connected set $V \subseteq C$ containing $x$ with $\coc_x(V) \ge L$ and $\meanrf{V} \approx_\e r$. We denote by $\MeanrfGrest{C}(x)$ the set of all such $r \in \R$.
\end{defn}

The compactness of the interval $[-\Linf{f},\Linf{f}]$ immediately implies:

\begin{obs}
    If $C$ is a $\coc$-infinite $G$-connected set then $\MeanrfGrest{C}(x) \ne \0$ for each $x \in C$.
\end{obs}

\begin{remark}
	For a $\viscocG$-downward cone $C = \dconecocG{x}$, we note that the restriction of $\coc$ to $C$ is a coboundary, being the differential of the weight-function $\coc_x : C \to \R^+$. Note that $\coc_x$ is bounded above by $1$. It is easy to see that the definition of a $(G, \coc)$-asymptotic average over $C$ as above coincides with that of an asymptotic $\coc_x$-weighted mean along $G \rest{C}$ as defined in \cite{Miller-Ts:erg_hyp_dec}*{Definition 8.2}. Thus, the following three lemmas are just restatements of \cite{Miller-Ts:erg_hyp_dec}*{8.4 and 8.5} in our terms.
\end{remark}

\begin{lemma}\label{invariance_of_cone-averages}
	The function $\MeanrfGrest{C}$ is constant for any $G$-connected set $C \subseteq X$, i.e.~$\MeanrfGrest{C}(x) = \MeanrfGrest{C}(y)$ for any $x,y \in C$.
\end{lemma}
\begin{proof}
	There is a $G$-path connecting $x$ and $y$, whose effect on the averages of $f$ over arbitrarily $\coc$-large sets is arbitrarily small.
\end{proof}

\begin{lemma}[Intermediate value property]\label{intermediate_value_property}
	Let $U,V \in \FinrG$ be such that $U \subseteq V$ and let
	\[
	\De \defeq 2 \Linf{f} \frac{\maxr (V \setminus U)}{\coc(U)}.
	\]
	For every real $r$ between $\meanrf{U}$ and $\meanrf{V}$, there is $W \in \FinrG$ with $U \subseteq W \subseteq V$ and $\meanrf{W} \approx_\De r$.
\end{lemma}
\begin{proof}
    Firstly note that there is $V' \in \FinrG$ with $U \subseteq V' \subseteq V$ such that $V' \setminus U$ is finite, yet $\coc(V') \ge (1-\e)\coc(V)$, where $\e \in (0,1)$ is small enough to guarantee $\meanrf{V'} \approx_\Delta \meanrf{V}$ by \cref{convexity_of_average}\labelcref{item:convexity_of_average:increment_bound}. Now, we can add the vertices of $V' \setminus U$ to $U$ one-by-one, obtaining a finite sequence of $G$-connected supersets of $U$ increasing up to $V'$. It remains to observe that adding one vertex can change the average at most by $\De$ again by \cref{convexity_of_average}\labelcref{item:convexity_of_average:increment_bound}.
\end{proof}

\begin{lemma}\label{averages_on_cones_closed_interval}
	For any $\viscocG$-downward cone $C$ and $x \in C$, $\MeanrfGrest{C}(x)$ is closed and convex; thus, it is a closed subinterval of $\big[-\Linf{f}, \Linf{f}\big]$.
\end{lemma}
\begin{proof}
    Suppose $\MeanrfGrest{C}(x) \ne \0$, which implies that $\coc(C) = \infty$. Then the closedness of $\MeanrfGrest{C}(x)$ follows from the asymptotic nature of the definition of $\MeanrfGrest{C}$. Convexity follows from \cref{intermediate_value_property}. Indeed, for any $r_1,r_2 \in \MeanrfGrest{C}$ and a real $r$ between $r_1,r_2$, there are arbitrarily $\coc$-large $G$-connected subsets $U \ni x$ of $C$ with $\meanrf{U}$ arbitrarily close to $r_1$. For each such $U$, there are arbitrarily $\coc$-large $G$-connected subsets $V \ni x$ of $C$ with $\meanrf{V}$ arbitrarily close to $r_2$ and $\frac{\coc(U)}{\coc(V)}$ arbitrarily small. Thus, we can make $\meanrf{U \cup V}$ arbitrarily close to $\meanrf{V}$, hence we may choose $V$ containing $U$ to begin with. We may also assume that $r$ is between $\meanrf{U}$ and $\meanrf{V}$ because otherwise, $U$ or $V$ would witness the arbitrary closeness of $r$ to $\MeanrfGrest{C}(x)$. Finally, we may apply \cref{intermediate_value_property} to $U, V$ and $r$ and obtain a set $W$ with $U \subseteq W \subseteq V$ with $\meanrf{W} \approx_\Delta r$, where
	\[
	\Delta \defeq 2 \Linf{f} \frac{\maxr (V \setminus U)}{\coc(U)}
	=
	2 \Linf{f} \frac{\maxr[\coc_y] (V \setminus U)}{\coc_y(U)}
	\le
	\frac{2 \Linf{f}}{\coc_y(U)}
	\]
	for some/any $y \in \Maxr C$.\footnote{\label{ftnt:why_visible_averages}Having a bound on $\maxr[\coc_y] (V \setminus U)$ independent of $U, V$ in the proof of \cref{averages_on_cones_closed_interval} is the reason why only visible asymptotic averages are included in $\MeanrGf$ (in \cref{defn:vis-asymp-averages}).} Thus, choosing $U$ arbitrarily $\coc$-large makes $\Delta$ arbitrarily small, hence $r$ arbitrarily close to $\MeanrfGrest{C}$.
\end{proof}

\begin{defn}\label{defn:vis-asymp-averages}
	We call $r \in \R$ a \emph{$(G,\coc)$-visible asymptotic average} of $f$ at $x \in X$ if it is a $(G,\coc)$-asymptotic average of $f$ over some $\viscocG$-downward cone $C \ni x$. We denote by $\MeanrGf(x)$ the set of all $(G,\coc)$-visible\cref{ftnt:why_visible_averages} asymptotic averages of $f$ at $x$.
\end{defn}

\begin{prop}[Invariance]\label{invariance_of_visible-averages}
	The map $x \mapsto \MeanrGf(x)$ is $E_G$-invariant.
\end{prop}
\begin{proof}
	This is immediate from the amalgamation property of downward cones (\cref{vis:properties}\labelcref{item:vis:amalgamation}) and \cref{invariance_of_cone-averages}.
\end{proof}

\begin{prop}[Convexity]\label{convexity_of_visible-averages}
	For any $x \in X$, $\MeanrGf(x)$ is an increasing union of sets of the form $\MeanrfGrest{C}(x)$, where $C$ is a downward cone. In particular, it is a convex subset of $\big[-\Linf{f}, \Linf{f}\big]$.
\end{prop}
\begin{proof}
	By definition, $\MeanrGf(x)$ is a union of sets of the form $\MeanrfGrest{C}(x)$. Because downward cones amalgamate (\cref{vis:properties}\labelcref{item:vis:amalgamation}), this union is directed, so it can be turned into an increasing union, using the countability of $[x]_G$. The convexity of $\MeanrGf(x)$ is then due to \cref{averages_on_cones_closed_interval} and the fact that $C_1 \subseteq C_2$ implies $\MeanrfGrest{C_1} \subseteq \MeanrfGrest{C_2}$.
\end{proof}

\subsection{For a measurable graph}

Equipping $X$ with a standard Borel structure, we now suppose further that $G, \coc$, and $f$ are Borel, and we let $\mu$ be a $\coc$-invariant Borel probability measure on $X$.

Encoding intervals in $\R$ as points in $\big(\set{0,1,2,3} \times \R\big)^2$, where $0,1,2,3$ encode whether each endpoint is open or closed, we equip the set $\IC$ of all intervals with a natural standard Borel structure.

\begin{prop}
	The map $x \mapsto \MeanrGf(x) : X \to \IC$ is Borel.
\end{prop}
\begin{proof}
	The fact that the image is in $\IC$ is by \cref{convexity_of_visible-averages}. The Borelness follows by the definition of visible asymptotic averages, using Luzin--Novikov uniformization, which turns scanning over each $E_G$-class into a natural number quantifier.
\end{proof}

We now obtain a tiling of a $\mu$-co-$\e$ part of the space with tiles over which the $\coc$-average of $f$ is almost in $\MeanrGf$. For $r \in \R$, $A \subseteq \R$, and $\e > 0$, we write $r \in_\e A$ to mean that $\dist(r,A) < \e$. Also, for any $U \in \FinrG$, we write $\MeanrGf(U)$ to mean $\MeanrGf(x)$ for some/any $x \in U$.

\begin{prop}\label{tiling_with_average-approximating_sets}
	Suppose that $G$ is $\mu$-nowhere hyperfinite. For every $\e > 0$, there is a Borel tiling $\PC \subseteq \FinrG$ with a $\mu$-co-$\e$ domain such that for each $P \in \PC$, $\meanrf{P} \in_\e \MeanrGf(P)$.
\end{prop}
\begin{proof}
	Let $\SC$ denote the collection of all $U \in \FinrG$ satisfying $\meanrf{U} \in_\e \MeanrGf(U)$. It remains to show that $\SC$ is $(G,\coc)$-conic because then \cref{tiling_with_large_visibility} gives the desired tiling. To this end, suppose towards a contradiction that some $U \in \FinrG$ admits arbitrarily $\coc$-large visible neighborhoods $V$ that are not in $\SC$, i.e. $\dist(\meanrf{V}, \MeanrGf(U)) \ge \e$. Being visible neighborhoods of $U$, these $V$ are all contained in a single $\viscocG$-downward cone $C \supseteq U$, e.g. $\dconecocG{x}$ for any $x \in \Maxr U$. The compactness of $[-\Linf{f}, \Linf{f}]$ extracts a sequence $(V_n)$ with all $V_n$ contained in $C$ such that $\coc(V_n) \to \infty$ and $\lim_n \meanrf{V_n}$ exists and is outside of $\MeanrGf(U)$, a contradiction.
\end{proof}

\begin{remark}
	For a bounded weight-function, instead of a cocycle, a stronger version of \cref{tiling_with_average-approximating_sets} holds: the requirement of $\mu$-nowhere hyperfiniteness is unnecessary and the tiling in the conclusion has a conull domain. This is proven in \cite{Miller-Ts:erg_hyp_dec}*{8.8} and it is due to the fact that in this case of an actual bounded weight-function instead of a cocycle, the analogue of \cref{tiling_with_large_visibility} is much easier to prove and has a stronger conclusion.
\end{remark}

To illustrate the power of \cref{tiling_with_average-approximating_sets}, we prove the statement that motivated the definition of $\MeanrGf$. However, we do not use it in our proofs below.

\begin{cor}
$\E(f | \BC_{E_G}) \in \^{\MeanrGf}$ a.e.
\end{cor}
\begin{proof}
By subtracting $\E(f | \BC_{E_G})$ from $f$, we may assume that $\E(f | \BC_{E_G}) \equiv 0$. For simplicity of the presentation, suppose that $G$ is ergodic (the proof is uniform through the ergodic decomposition of of $G$ with respect to $\mu$ \cite{Ditzen:thesis}*{Theorem 6 of Chapter 2}), so $\MeanrGf$ is constant a.e. Suppose towards a contradiction that $0$ is $\Delta > 0$ away from $\MeanrGf$. Because $\MeanrGf$ is an interval, it is contained either in $[\Delta, \infty)$ or $(-\infty,-\Delta]$, and we suppose without loss of generality that it is the former. Take $\e > 0$ small enough so that $(\Delta - \e)(1-\e) > \frac{2\Delta}{3}$ and so that $\int_Y |f| d\mu < \frac{\Delta}{3}$ for any $Y \subseteq X$ of measure $\le \e$. Let $\PC$ be a tiling given by \cref{tiling_with_average-approximating_sets} for this $\e$. Putting $F \defeq E(\PC)$ and $D \defeq \dom(\PC)$, the local-global bridge lemma gives a contradiction:
\begin{align*}
\int_X f d\mu 
&= 
\int_{X \setminus D} f d\mu + \int_D f d\mu
\\
\eqcomment{$\mu(X \setminus D) < \e$}
&\ge
-\frac{\Delta}{3} + \int_D f d\mu
\\
\eqcomment{\cref{local-global_bridge}}
&=
-\frac{\Delta}{3} + \int_D \meanrf{F} d\mu
\\
\eqcomment{$\meanrf{F} \in_\e [\Delta, \infty)$ and $D$ is $\mu$-co-$\e$}
&\ge
-\frac{\Delta}{3} + (\Delta - \e) (1-\e) > \frac{\Delta}{3} > 0.\qedhere
\end{align*}
\end{proof}

\section{Proof of Main Lemma \namedthmlabelref{core_lemma}}\label{sec:proof}

This section is entirely devoted to the proof of Main Lemma \namedthmlabelref{core_lemma}. We let $X,\mu,G,\coc,f$, and $\e$ be as in the statement of the theorem. By subtracting $\E(f | \BC_{E_G})$ from $f$, we assume without loss of generality that $\E(f | \BC_{E_G}) \equiv 0$, yet $\Lone{f} > 0$.

We need to find a $G$-connected finite Borel equivalence relation $F$ with $\meanrFf \approx_\e 0$ on a $\mu$-co-$\e$ set. The following allows us to replace finite with $\coc$-finite.

\begin{lemma}\label{coc-finite->finite}
For any $G$-connected $\coc$-finite Borel equivalence relation $F$ and any $\e' > 0$, there is a $G$-connected finite Borel subequivalence relation $F' \subseteq F$ such that $\Meanrf{F'} \approx_{\e'} \meanrFf$ on a $\mu$-co-$\e'$ set.
\end{lemma}
\begin{proof}
    Fix any $\delta > 0$ less than $\e'$ and $\frac{\e'}{2 \Linf{f}}$. Let $X'$ be the set of all $x \in X$ such that $\coc(B_x) < (1 - \delta) \coc([x]_F)$, where $B_x \defeq \set{y \in [x]_F : x <_\coc y}$. By \cref{Maxr_nonempty_finite}, $B_x$ is finite for each $x \in X$, so for each $F$-class $C$, the set $C' \defeq C \cap X'$ is finite; moreover, $\coc(C') \ge (1-\delta) \coc(C)$. However, these $C'$ may not be $G$-connected and we extend them to $G$-connected finite sets as follows. Recalling \cref{cocord_well-order}, we let $X''$ be the set of all $x \in X$ that belong to the $\cocless$-maximum $G$-connected finite subset of $[x]_F$ containing $X' \cap [x]_F$ (the relation $\cocless$ is defined right after \cref{membership_is_Borel}). Now for each $F$-class $C$, the set $C'' \defeq X'' \cap C$ is finite, $G$-connected, and $\coc(C'') \ge (1-\delta)\coc(C)$. Finally, note that both $X'$ and $X''$ are Borel by Luzin--Novikov uniformization, which turns scanning over each $F$-class a natural number quantifier.
    
    Now define the equivalence relation $F'$ by setting $F' \rest{X''} \defeq F \rest{X''}$ and $F' \rest{X \setminus X''} \defeq \Id_{X \setminus X''}$. For each $x \in X''$, $[x]_{F'} = [x]_F \cap X''$, so $\coc([x]_{F'}) \ge (1-\delta) \coc([x]_F)$, which implies $\meanr{[x]_{F'}} f \approx_{\e'} \meanr{[x]_F}$, by \cref{convexity_of_average}\labelcref{item:convexity_of_average:increment_bound}. Moreover, by \cref{local-global_bridge},
    \[
    \mu(X'') = \int_X \1_{X''} d\mu = \int_X \meanr{F} \1_{X''} d\mu \ge \int_X (1-\delta) d\mu = 1 - \delta > 1 - \e',
    \]
    so $F'$ is as desired.
\end{proof}

Thus, without loss of generality (replacing $\e$ with $\frac{\e}{2}$), it is enough to find a $G$-connected $\coc$-finite Borel equivalence relation $F$ with $\meanrFf \approx_\e 0$ on a $\mu$-co-$\e$ set. We call such equivalence relations \textit{good for Main Lemma}.

\begin{assumption}
To simplify notation in the proof, we assume that $G$ is ergodic. In other words, we present the proof restricting to one ergodic component of the ergodic decomposition of $G$ with respect to $\mu$ \cite{Ditzen:thesis}*{Theorem 6 of Chapter 2}. This makes the map $x \mapsto \MeanrGf(x)$ constant a.e.~because it is $E_G$-invariant; thus, we drop $x$ from the notation. However, the proof goes through without this assumption, but all the constants below that depend on $\MeanrGf$ (e.g. $\delta$ in \cref{hypothesis:not_one-sided}) would depend on $x \in X$ (although would still be an $E_G$-invariant functions).
\end{assumption}

For $\de \ge 0$, put $I^-_\delta \defeq (-\infty, \delta]$, $I_\delta \defeq (- \delta, \delta)$, and $I^+_\delta \defeq [\delta, \infty)$.

\subsection{Cutting one side of the set of asymptotic averages}

Note that taking a quotient of $G$ by a $G$-connected $\coc$-finite Borel equivalence relation $F$ results in a graph $\Gmod{F}$ with a more restricted set of visible asymptotic averages, i.e.
\[
\Meanrf{G : F} \defeq \Mean{\rmod{F}}{\Gmod{F}} (\meanrFf) \subseteq \MeanrGf,
\]
where the inclusion follows from \cref{convexity_of_average}\labelcref{item:convexity_of_average:disjU_of_averages}: average of averages is just the average.

Having the ability (\cref{tiling_with_average-approximating_sets}) to build tilings whose each tile $P$ has $\meanrf{P}$ arbitrarily close to $\MeanrGf$, it makes sense to try building a $G$-connected $\coc$-finite Borel equivalence relation $F$ such that quotienting out by it shrinks the set of visible asymptotic averages arbitrarily tightly around $0$. In fact, a weaker conclusion is enough due to the following.

\begin{lemma}[Cutting one side is enough]\label{cutting_one_side}
	For $\delta_\e \defeq \frac{\e^2}{2(\Linf{f} + 1)}$, any $\coc$-finite Borel equivalence relation $F$, and any a sign $\sigma \in \set{+,-}$, if $\meanrFf \in I^\sigma_{\delta_\e}$ with probability $\le \delta_\e$, then $\meanrFf \in I_\e$ with probability $\ge 1-\e$.
\end{lemma}
\begin{proof}
	Fixing an $F$ as in the hypothesis and taking $\sigma \defeq \mathord{-}$ (the proof is symmetric), we suppose that the set $X^-_{\delta_\e} \defeq \set{x \in X : \meanrFf(x) \in I^-_{\delta_\e}}$ has measure $\le {\delta_\e}$ and aim to show that $X_\e \defeq \set{x \in X : \meanrFf(x) \in I_\e}$ has measure $\ge 1 - \e$. It is enough to show that $X^+_\e \defeq \set{x \in X : \meanrFf(x) \in I^+_\e}$ has measure $\le \frac{\e}{2}$ because $X \subseteq X^-_{\delta_\e} \cup X_\e \cup X^+_\e$, so $\mu(X_\e) \ge \mu\big(X \setminus (X^-_{\delta_\e} \cup X^+_\e)\big) \ge 1 - {\delta_\e} - \frac{\e}{2} \ge 1 - \e$. To this end, using \cref{local-global_bridge}, we compute:
	\begin{align*}
		0 = \int_X f d\mu = \int_X \meanrFf d\mu
		&= 
		\int_{X^-_{\delta_\e}} \meanrFf d\mu + \int_{X^+_\e} \meanrFf d\mu + \int_{X \setminus (X^-_{\delta_\e} \cup X^+_\e)} \meanrFf d\mu
		\\
		&\ge
		-\Linf{f} \cdot \mu(X^-_{\delta_\e}) + \e \cdot \mu(X^+_\e) - {\delta_\e} \cdot \mu\big(X \setminus (X^-_{\delta_\e} \cup X^+_\e)\big)
		\\
		&\ge
		-\Linf{f} \cdot {\delta_\e} + \e \cdot \mu(X^+_\e) - \delta_\e
		\\
		&=
		\mu(X^+_\e) \cdot \e - \delta_\e \cdot (\Linf{f} + 1),
	\end{align*}
	so
	$
	\mu(X^+_\e) \le {\delta_\e} \cdot \frac{\Linf{f} + 1}{\e} = \frac{\e}{2}.
	$
\end{proof}

Thus, if for every $\delta > 0$, there were a $G$-connected $\coc$-finite Borel equivalence relation $F$ and a sign $\sigma \in \set{+,-}$ with $(\Meanrf{G : F}) \cap I_\de^\sigma = \0$, then \cref{tiling_with_average-approximating_sets} applied to $\Gmod{F}$, $\mumod{f}$, and $\meanrFf$ would yield a $G$-connected $\coc$-finite Borel equivalence relation $F' \supseteq F$ with $\meanrf{F'} \notin I_{2\de}^\sigma$ on a $\mu$-co-$\de$ set. Taking $\de$ sufficiently small, \cref{cutting_one_side} would imply that $\meanrf{F'} \in I_\e$ on a $\mu$-co-$\e$ set, so $F'$ is good for Main Lemma. Thus, without loss of generality, we assume the following.

\begin{hypothesis}\label{hypothesis:not_one-sided}
	There is a $\de > 0$ such that for any $G$-connected $\coc$-finite Borel equivalence relation $F$, $\Meanrf{G : F}$ intersects both $I^+_\delta$ and $I^-_\delta$.
\end{hypothesis}

We will exploit this hypothesis and the non-$\mu$-hyperfiniteness of $G$ via packed tilings in \cref{subsec:proof:packed_domains_large,subsec:proof:iterative_tiling}.

\begin{comment}
\begin{obs}\label{moding_preserves_hypothesis}
For any a $G$-connected $\coc$-finite equivalence relation $F$, taking the quotient by $F$ preserves all of our hypothesis; in particular, $\Gmod{F}$ is $\mumod{F}$-nowhere hyperfinite and $\mumod{F}$-ergodic, $\Linf{\meanrFf} \le \Linf{f}$, and \cref{hypothesis:not_one-sided} is satisfied by $\Gmod{F}$.
\end{obs}
\end{comment}

\subsection{Domains of packed tilings}\label{subsec:proof:packed_domains_large}

Let $F$ be a $G$-connected $\coc$-finite equivalence relation and let $\pi_F : X \to \Xmod{F}$ denote the quotient map.

For $\la > 0$, we call $U \in \FinrGmod{F}$ \emph{$\lambda$-central} (resp. \emph{$\la$-positive}, \emph{$\la$-negative}) if $\mean{\rmod{F}}{U} (\meanrFf) \in I_\la$ (resp., $I^+_\lambda$, $I^-_\lambda$). Note that $\mean{\rmod{F}}{U} (\meanrFf) = \meanr{\pi_F^{-1} U} f$, so the notions of $\lambda$-central/negative/positive coincide for $U$ and its pullback $\pi_F^{-1} (U)$. Also, $\cocmax(\pi_F^{-1}(U)) \ge \cocmaxF(U)$ by \cref{coc-quotient}.

For $\lambda, L > 0$, let $\SCmod{F}(\la,L)$ denote the collection of all $\lambda$-central $U \in \FinrGmod{F}$ with $\cocmaxF(U) > L$. We omit writing the subscript $/F$ if $F = \Id_X$.

\begin{lemma}[Finitizing visibility]\label{visibility-finitizing_domain}
	For any $\lambda < \delta$ as in \cref{hypothesis:not_one-sided}, $p \defeq \frac{\la}{\Linf{f}}$, and any tiling $\PC \subseteq \FinrGmod{F}$ that is $p$-packed within $\SCmod{F}(\la,L)$, $(\Gmod{F})_{-\dom(\PC)}$ has finite $\rmod{F}$-visibility.
\end{lemma}
\begin{proof}
    Taking the quotient by $F$ preserves all of our hypotheses; in particular, $\Gmod{F}$ is $\mumod{F}$-ergodic and \cref{hypothesis:not_one-sided} is satisfied by $\Gmod{F}$, so $\Meanrf{\Gmod{F} : E(\PC)}$ is constant and intersects both $I^+_\delta$ and $I^-_\delta$. Also, $\Linf{\meanrFf} \le \Linf{f}$, so assume without loss of generality that $F = \Id_X$.

	Towards the contrapositive, we suppose that $G' \defeq G_{-\dom(\PC)}$ does not have finite $\coc$-visibility and aim to show that $\PC$ is not $p$-packed within $\SC(\la,L)$. Let $x \in X \setminus \dom(\PC)$ be such that $C \defeq (\vis^\coc_{G'})^x$ is $\coc$-infinite. We use below that for any visible neighborhood $V \subseteq C$ of $x$, $\cocmax(V) = \coc_x(V)$.
	
	By \cref{packed_properties}\labelcref{packed_properties:maximal}, $\PC$ is maximal within $\SC(\lambda,L)$, so there is no $V \in \SC(\la,L)$ that is entirely contained in $C$. This and the convexity of $\MeanrfGrest{C}(x)$ (\cref{averages_on_cones_closed_interval}) imply that $x$ cannot have both $\la$-positive and $\la$-negative arbitrarily $\coc$-large visible neighborhoods $V \subseteq C$. Thus, all $\coc$-large enough visible neighborhoods of $x$ within $C$ must have the same $\la$-sign. For concreteness, suppose that they are all $\la$-positive.
		
	Because $\Meanrf{G : E(\PC)}$ intersects $I_\delta^-$ and $\lambda < \delta$, there is a $\la$-negative $E(\PC)$-invariant $U \in \FinrG$ containing $x$ such that $\Xmod[U]{E(\PC)}$ is finite, $\cocmax(U) \ge L$, and $2 \Linf{f} \frac{\coc(x)}{\coc(U)} < \la$. The latter condition ensures, by the intermediate value property (\cref{intermediate_value_property}), that there is $W \in \FinG[C]$ disjoint from $U$ but $G$-adjacent to $U$ (i.e. $\EdgsBtw{G}{W,U} \ne \0$) such that 
	\[
		0 \le \meanrf{U \cup W} < \la.
	\]
	Because $W \subseteq C$, $\maxr W \le \coc(x) \le \maxr U$, so $\cocmax(U \cup W) \ge \cocmax(U) \ge L$ and hence, $U \cup W \in \SC( \la, L)$. Lastly, $U \cup W$ contains only finitely-many tiles from $\PC$, so it is enough to prove that $U \cup W$ is a $p$-pack over $\PC$, i.e. $\coc(W) \ge p \cdot \coc(U)$. To this end, by \cref{convexity_of_average}\labelcref{item:convexity_of_average:disjU_of_averages},
	\begin{align*}
		\coc(W) \cdot \meanrf{W}
		&= 
		\big(\coc(U) + \coc(W)\big) \cdot \meanrf{U \cup W} - \coc(U) \cdot \meanrf{U}
		\\
		&\ge
		\big(\coc(U) + \coc(W)\big) \cdot 0 - \coc(U) \cdot (- \la) = \la \cdot \coc(U).
	\end{align*}
	
	\noindent In particular, $\meanrf{W} > 0$, so 
	$
	\coc(W)
	\ge 
	\frac{\la}{\meanrf{W}} \cdot \coc(U)
	\ge 
	\frac{\la}{\Linf{f}} \cdot \coc(U)
	\ge
	p \cdot \coc(U).
	$
\end{proof}

\begin{cor}\label{hyperfinitizing_domain}
For any $\lambda < \delta$ as in \cref{hypothesis:not_one-sided}, $p \defeq \frac{\la}{\Linf{f}}$, and any tiling $\PC \subseteq \FinrGmod{F}$ that is $p$-packed within $\SCmod{F}(\la,L)$, $\pi_F^{-1}(\dom(\PC))$ is a hyperfinitizing Borel vertex-cut for $G$; in particular, $\mu\big(\dom(\PC)\big) \ge \hvp(G)$.
\end{cor}
\begin{proof}
By \cref{visibility-finitizing_domain} and \cref{finite_visibility=>hyperfinite}, $D \defeq \dom(\PC)$ is a hyperfinitizing vertex-cut for $\Gmod{F}$. The hyperfiniteness of $(\Gmod{F})_{-D}$ implies that of $G_{-\pi_F^{-1}(D)}$ by \cite{JKL}*{1.3(ii)} because $\pi_F$ is a Borel reduction $E_G \to \Emod[(E_G)]{F}$.
\end{proof}

\subsection{Iterative tiling and its limsup}\label{subsec:proof:iterative_tiling}

To construct a $\coc$-finite equivalence relation $F$ that is good for Main Lemma, we first obtain a coherent sequence of approximately saturated tilings $(\PC_n)$ that contain $G$-connected sets of higher and higher \cocratio and become more and more central and more and more packed. We then show that putting together enough finite-many of these $\PC_n$ yields a desired $F$.

\begin{claimlemma}\label{finitely_based}
    For any $\lambda, L > 0$ and any $G$-connected $\coc$-finite Borel equivalence relation $F$, the collection $\SCmod{F}(\lambda, L)$ has finitely based quotients.
\end{claimlemma}
\begin{proof}
It is enough to show that $\SCmod{F}(\lambda, L)$ is finitely based because for any Borel tiling $\PC \subseteq \SCmod{F}(\la,L)$, the finite basedness of the quotient of $\SCmod{F}(\la,L)$ by $E(\PC)$ is implied by the finite basedness of $\SCmod{E(\PC)}(\la,L)$. Working mod $F$, we assume without loss of generality that $F = \Id_X$. It remains to show that $\SC(\lambda,L)$ is finitely based.

Let $A \in \SC(\lambda,L)$ and let $B \subseteq A$ be finite. Fixing $x \in \Maxr A$, we have that $\coc_x(A) = \cocmax(A) > L$, hence letting $A' \subseteq A$ be a finite $G$-connected subset containing $B \cup \set{x}$ with $\coc_x(A \setminus A')$ small enough, we have that $\cocmax(A') > L$. Moreover, because $\meanrf{A} \in I_\lambda$ and $I_\lambda$ is open, making $\coc_x(A \setminus A')$ possibly even smaller ensures, by \cref{convexity_of_average}\labelcref{item:convexity_of_average:increment_bound}, that $\meanrf{A'} \in I_\lambda$ hence $A' \in \SC(\lambda,L)$.
\end{proof}

Let $\de \in (0,1)$ be as in \cref{hypothesis:not_one-sided} and for each $n \ge 0$, put 
\begin{align*}
	\la_n &\defeq 3^{-n} \cdot \de \cdot \e < \e
	\\
	L_n &\defeq 4^n
	\\
	p_n &\defeq \frac{\la_{n+2}}{\Linf{f} + \la_{n+1}}.
\end{align*}

\begin{remark}
	All we need below is that $\lim_n \la_n L_n = \w$ (this warrants \labelcref{eq:L_n_is_8_and_average_change_bnd} below and hence, \cref{all_B_have_the_same_sign}), and that $(\la_n)$ decays exponentially to $0$ (used in \cref{average_at_the_border}). The choice of $(p_n)$ is made to yield a contradiction in the proof of \cref{packing_over}.
\end{remark}

We recursively define a coherent sequence $(\PC_n)_{n \ge 0}$ of Borel tilings contained in $\FinrG$; in particular, the equivalence relations $F_n \defeq \bigcup_{k<n} E(\PC_k)$ are increasing, where $F_0 \defeq \Id_X$. For $n \ge 0$, suppose that $F_n$ is defined. By \cref{finitely_based}, \cref{existence-of-packed-and-saturated} applies to $\SCmod{F_n}(\lambda_n, L_n)$ with $p \defeq p_n$ and yields a Borel tiling $\QC \subseteq \^{\SCmod{F_n}(\lambda_n, L_n)}$ that is $p_n$-packed within $\SCmod{F_n}(\lambda_n, L_n)$ and approximately saturated within $\SCmod{F_n}'(\lambda_n,L_n) \defeq \SCmod{F_n}(\lambda_n, L_n) \cap \FinGmod{F_n}$ modulo $\rmod{F_n}$-deficient. Putting $\PC_n \defeq \set{\pi_{F_n}^{-1}(Q) : Q \in \QC}$ finishes the construction. 

By \cref{deficient=>no_prob_meas}, if an $\Emod[(E_G)]{F_n}$-invariant Borel $A_n \subseteq \Xmod{F_n}$ is $\rmod{F_n}$-deficient, then it is $\mumod{F_n}$-null, so $\pi_{F_n}^{-1}(A_n)$ is $\mu$-null. Thus, discarding countably-many $E_G$-invariant $\mu$-null sets from $X$, we have that for each $n \ge 0$, $\submod{(\PC_n)}{F_n}$ is $p_n$-packed within $\SCmod{F_n}(\lambda_n, L_n)$ and approximately saturated within $\SCmod{F_n}'(\lambda_n,L_n)$. In particular, $\submod{(\PC_n)}{F_n} \subseteq \^{\SCmod{F_n}(\lambda_n, L_n)}$, which implies by \cref{incU_finite=>increasing_ratio} and \cref{convexity_of_average}\labelcref{item:convexity_of_average:increment_bound} that for each $n \in \N$ and $P \in \PC_n$, 
\begin{equation}\label{eq:PC_n-conditions}
\cocmax (P) > L_n \;\text{ and }\; \meanrf{P} \in \^{I_{\lambda_n}} \defeq [-\lambda_n, \lambda_n].
\end{equation}

Let $D_n \defeq \dom(\PC_n)$ and $D_\w \defeq \limsup_n D_n \defeq \set{x \in X : x \in D_n \text{ for infinitely many $n \in \N$}} = \bigcap_N \bigcup_{n \ge N} D_n$. By \cref{hyperfinitizing_domain}, $\mu(D_n) \ge \hvp(G)$, and $\hvp(G) > 0$ by our assumption that $G$ is not $\mu$-hyperfinite. Thus, $D_\w$ has positive measure by the downward continuity of $\mu$.

In the next subsection, we show that $D_\w$ is actually conull. Granted this, the proof of \cref{ptwise_ergodic_for_graphs} is completed as follows. Fix $n \in \N$ large enough so that $D \defeq \bigcup_{k \le n} D_k$ is $\mu$-co-$\e$. Then $D$ is $F_n$-invariant and for each $x \in D$, $[x]_{F_n} \in \PC_k$ for some $k \le n$, so $[x]_{F_n}$ is $\lambda_k$-central, and hence $\e$-central. Thus, $F_n$ is good for Main Lemma.

\subsection{The conullness of $D_\w$ amounts to a packing condition}

Suppose towards a contradiction that $X \setminus D_\w$ has positive measure, hence so does the inner-boundary $\bdrin{G}(D_\w)$ by the $E_G$-quasi-invariance of $\mu$.

Putting $F_\w \defeq \bigcup_n F_n$, we observe that $D_\w$ is $F_\w$-invariant and each $F_\w$-class $U$ outside of $D_\w$ is actually an $F_n$-class for large enough $n$; in particular, $U$ is $\coc$-finite. On the other hand, each $F_\w$-class within $D_\w$ is $\coc$-infinite by \cref{increasing_ratio=>infinite} because for each $n \in \N$ and $P \in \PC_n$, we have $\cocmax(P) > L_n$ by \labelcref{eq:PC_n-conditions} and $\lim_n L_n = \infty$. Hence, $\DirEdgsBtw{G}{D_\w, X \setminus D_\w} = \bigincU_{N = 1}^\w \vec{H}_N$, where 
\[
	\vec{H}_N \defeq \set{(x,y) \in \DirEdgsBtw{G}{D_\w, X \setminus D_\w} : \coc\big([x]_{F_N}\big) \ge \coc\big([y]_{F_\w}\big) \text{ and } \forall n \ge N \, [y]_n = [y]_{\infty}}.
\]
The $E_G$-quasi-invariance of $\mu$ implies that for all large enough $N \ge 1$, the inner boundary $\bdrin{\vec{H}_N}(D_\w)$ has positive measure and moreover, for all $n \ge N$,
\begin{equation}\label{eq:L_n_is_8_and_average_change_bnd}
L_n \ge 8 \text{ and } \frac{2 \cdot \Linf{f}}{L_n} < \frac{1}{3} \la_n.
\end{equation}
Furthermore, we can choose such an $N$ so that $D_N' \defeq \bdrin{\vec{H}_N} D_N \subseteq D_\infty$ has positive measure.

For each $n \ge N$, let $\PC_n' \defeq \set{P \in \PC_n : P \cap D_N' \ne \0}$. Note that $(\PC_n')_{n \ge N}$ does not stabilize anywhere on $[D_N']_{F_\infty}$ because $D_N' \subseteq D_\infty$. Thus, it remains to show that $(\PC_n')_{n \ge N}$ is a $p$-packing sequence for some $p > 0$ because then, \cref{packing_stabilizes} implies that $[D_N']_{F_\infty}$ is $\coc$-deficient, hence null (by \cref{deficient=>no_prob_meas}), contradicting $\mu(D_N') > 0$.

\subsection{Verifying the packing condition}

In this subsection we show that $(\PC_n')_{n \ge N}$ is $\frac{2}{13}$-packing, thus completing the proof of Main Lemma. Because each tile in $\PC' \defeq \bigvee_n \PC_n'$ is equal to an $F_\infty$-class that meets $D_N'$, it is enough to fix one such $F_\infty$-class $C$ and show that $(\PC_n' \rest{C})_{n \ge N}$ is $\frac{2}{13}$-packing. Fix such a $C$.

\begin{claimlemma}\label{all_B_have_the_same_sign}
    There is a sign $\sigma_C \in \set{+, -}$ such that $\meanrf{[y]_{F_\w}} \in \R^{\sigma_C}$ for every $y \in \bdrout{\vec{H}_N} (D_N' \cap C)$.
\end{claimlemma}
\begin{pf}
	Suppose towards a contradiction that there are (possibly equal) points $y_-,y_+ \in \bdrout{\vec{H}_N} (D_N' \cap C)$ such that $\meanrf{[y_-]_{F_\w}} \le 0 \le \meanrf{[y_+]_{F_\w}}$. Let $x_-,x_+ \in D_N' \cap C$ be (possibly equal, even when $y_- \ne y_+$) points such that $(x_-, y_-), (x_+, y_+) \in \vec{H}_N$. Fix $n > N$ large enough so that $P \defeq [x_-]_{F_{n+1}} = [x_+]_{F_{n+1}}$ and such that $x_+ \in D_n$, so $P \in \PC_n$. 
	
	Put $Q \defeq \Xmod[P]{F_n}$, and for each $\sigma \in \set{-, +}$, put $x_\sigma' \defeq [x_\sigma]_{F_n}$ and $y_\sigma' \defeq [y_\sigma]_{F_n}$. The definition of $\vec{H}_N$ implies that $y_\sigma' = [y_\sigma]_{F_\infty}$ and
	\begin{equation}\label{eq:x>y}
	\rmod{F_n}(x_-') \ge \rmod{F_n}(y_-') \;\text{and}\; \rmod{F_n}(x_+') \ge \rmod{F_n}(y_+').
	\end{equation}
    Recall that $\QC \defeq \submod{(\PC_n)}{F_n}$ admits a saturating $\SCmod{F_n}'(\lambda_n,L_n)$-approximation $(\QC_m)$. We now switch to working mod $F_n$, so we assume without loss of generality that $F_n = \Id_X$ and drop $F_n$ from the notation; in particular, we write $f$ instead of $\meanrf{F_n}$.
    
    We show that if $\meanrf{Q} \le 0$ then $y_+'$ must have been in $\dom(\QC)$, and if $\meanrf{Q} \ge 0$ then $y_-'$ must have been in $\dom(\QC)$. Without loss of generality, we suppose that $\meanrf{Q} \le 0$ (the argument is symmetric). For each $m \in \N$, putting $Q_m \defeq [x_+']_{E(\QC_m)}$, observe that $x_-' \in Q = \bigincU_m Q_m$ and that $\lim_m \meanrf{Q_m} = \meanrf{Q}$ by \cref{convexity_of_average}\labelcref{item:convexity_of_average:increment_bound} because $\lim_m \coc(Q \setminus Q_m) = 0$. Thus, for all large enough $k \in \N$ the following holds and we fix such a $k$:
    
	\begin{enumerate}[(i), widest=iii]
	    \item\label{all_B_have_the_same_sign:is_a_tile} $Q_k \in \bigcup_{m \in \N} \QC_m \subseteq \SC'(\lambda_n,L_n)$;
	
	    \item\label{all_B_have_the_same_sign:both_points_in_Qk} $x_-' \in Q_k$;
	    
	    \item\label{all_B_have_the_same_sign:averages_approx-equal} $\meanrf{Q_k} \approx_{\frac{\lambda_n}{3}} \meanrf{Q}$.
	\end{enumerate}
	
	Taking $Q_k' \defeq Q_k \cup \set{y_+'}$, \labelcref{all_B_have_the_same_sign:both_points_in_Qk} and \labelcref{eq:x>y} imply that $\maxr Q_k \ge \maxr \set{x_-', x_+'} \ge \maxr \set{y_-', y_+'}$, so $\maxr Q_k' = \maxr Q_k$ and hence $\cocmax (Q_k') = \frac{\coc(Q_k) + \coc(y_+')}{\maxr (Q_k)} > \cocmax (Q_k) > L_n$, where the last inequality is due to \labelcref{all_B_have_the_same_sign:is_a_tile}. Furthermore, because $f(y_+') \ge 0$ and $\meanrf{Q_k} \in I_{\lambda_n}$ (by \labelcref{all_B_have_the_same_sign:is_a_tile}), we have $\meanrf{Q_k'} \ge \meanrf{Q_k} > -\lambda_n$. On the other hand, by \cref{convexity_of_average}\labelcref{item:convexity_of_average:increment_bound}, the fact that $\cocmax(Q_k) > L_n$, and \labelcref{eq:L_n_is_8_and_average_change_bnd},
	\[
	\meanrf{Q_k'} - \meanrf{Q_k} \le \frac{2 \cdot \Linf{f} \cdot \coc(y_+')}{\coc(Q_k)} \le \frac{2 \cdot \Linf{f} \cdot \coc(y_+')}{L_n \cdot \maxr Q_k} \le \frac{2 \cdot \Linf{f}}{L_n} < \frac{\lambda_n}{3},
	\]
	so $\meanrf{Q_k'} < \meanrf{Q_k} + \frac{\lambda_n}{3} \le \frac{2\lambda_n}{3}$ by \labelcref{all_B_have_the_same_sign:averages_approx-equal}. Thus, $Q_k'$ is $\lambda_n$-central, and hence $Q_k' \in \SC'(\lambda_n,L_n)$, contradicting that $(\QC_m)$ is a saturating $\SC'(\lambda_n,L_n)$-approximation for $\QC$.
\end{pf}

Suppose, for the sake of concreteness, that $\sigma_C = -$ as the proof for $\sigma_C = +$ is analogous: instead of $\meanrf{P} \le - 2 \la_{n+1}$ below, we would have $\meanrf{P} \ge 2 \la_{n+1}$, and the proof of \cref{packing_over} would use the reverse inequalities and opposite signs.

\begin{claimlemma}\label{tiles_average_at_the_border}
	$\meanrf{P} \le - \frac{2}{3} \la_n = - 2 \la_{n+1}$ for each $n \ge N$ and each $P \in \PC_n' \rest{C}$.
\end{claimlemma}
\begin{pf}
    Suppose towards a contradiction that $\meanrf{P} > - \frac{2}{3} \la_n$, so $\mean{\rmod{F_n}}{Q} (\meanrf{F_n}) = \meanrf{P} > - \frac{2}{3} \la_n$, where $Q \defeq \Xmod[P]{F_n}$. Like in the proof of \cref{all_B_have_the_same_sign}, let $x'$ denote the $F_n$-class of some $x \in P \cap D_N'$ and let $y'$ denote the $F_n$-class of some $y \in X \setminus D_\infty$ such that $(x,y) \in \vec{H}_N$. By the definition of $ \vec{H}_N$, $y' = [y]_{F_\infty}$ and $\coc(x') \ge \coc(y')$. Lastly, our assumption of $\sigma_C = -$ implies that $\meanrf{y'} < 0$.
    
    Recall that $\QC \defeq \submod{(\PC_n)}{F_n}$ admits a saturating $\SCmod{F_n}'(\lambda_n,L_n)$-approximation $(\QC_m)$. We now switch to working mod $F_n$, so we assume without loss of generality that $F_n = \Id_X$ and drop $F_n$ from the notation.
    
    For each $m \in \N$, putting $Q_m \defeq [x']_{E(\QC_m)}$, observe that $\lim_m \meanrf{Q_m} = \meanrf{Q}$ by \cref{convexity_of_average}\labelcref{item:convexity_of_average:increment_bound} because $\lim_m \coc(Q \setminus Q_m) = 0$. Thus, for all large enough $k \in \N$ the following holds and we fix such a $k$:
    
	\begin{enumerate}[(i), widest=ii, itemsep=4pt]
	    \item\label{tiles_average_at_the_border:is_a_tile} $Q_k \in \bigcup_{m \in \N} \QC_m \subseteq \SC'(\lambda_n,L_n)$;
	    
	    \item\label{tiles_average_at_the_border:averages_approx-equal} $\meanrf{Q_k} > - \frac{2}{3} \la_n$.
	\end{enumerate}
	
    We take $Q_k' \defeq Q_k \cup \set{y'}$, so $\maxr Q_k \ge \coc(x') \ge \coc(y')$, and hence $\maxr Q_k' = \maxr Q_k$, thus $\cocmax (Q_k') > \cocmax (Q_k) > L_n$, where the last inequality is due to \labelcref{tiles_average_at_the_border:is_a_tile}. Because $\meanrf{y'} < 0$, we have $\meanrf{Q_k'} < \meanrf{Q_k} < \lambda_n$. On the other hand, by \cref{convexity_of_average}\labelcref{item:convexity_of_average:increment_bound}, the fact that $\cocmax(Q_k) > L_n$, and \labelcref{eq:L_n_is_8_and_average_change_bnd},
	\[
	\meanrf{Q_k} - \meanrf{Q_k'}
	\le
	2 \Linf{f} \frac{\coc(y')}{\coc(Q_k)} 
	\le 
	2 \Linf{f} \frac{\coc(y')}{L_n \cdot \maxr Q_k} 
	\le 
	\frac{2 \Linf{f}}{L_n} 
	< 
	\frac{1}{3} \la_n,
	\]
	so $\meanrf{Q_k'} > \meanrf{Q_k} - \frac{1}{3} \la_n > - \frac{2}{3} \la_n - \frac{1}{3} \la_n = -\lambda_n$ by \labelcref{tiles_average_at_the_border:averages_approx-equal}. Thus, $Q_k'$ is also $\lambda_n$-central, so $Q_k' \in \SC'(\lambda_n,L_n)$, contradicting that $(\QC_m)$ is saturating $\SC'(\lambda_n,L_n)$-approximation for $\QC$.
\end{pf}

\begin{claimlemma}\label{average_at_the_border}
	$\meanrf{[x]_{F_{n+1}}} \le - 2 \la_{n+1}$ for each $n \ge N$ and $x \in D_N' \cap C$.
\end{claimlemma}
\begin{pf}
    Because $x \in D_N' \cap C \subseteq D_N$, $P \defeq [x]_{F_{n+1}} \in \PC_k' \rest{C}$ for some $k \in \N$ such that $N \le k \le n$. By \cref{tiles_average_at_the_border}, $\meanrf{P} \le -2 \lambda_{k+1} \le - 2 \lambda_{n+1}$.
\end{pf}

\begin{claimlemma}\label{packing_over}
	For each $n \ge N$, each $P \in \PC_{n+1}' \rest{C}$ is a $\frac{2}{13}$-pack over $\bigvee_{m \le n} \PC_n'$.
\end{claimlemma}
\begin{pf}
    If $U \defeq \dom(\bigvee_{m \le n} \PC_n') \cap P = \0$, then $P$ is trivially a $\frac{2}{13}$-pack over $\bigvee_{m \le n} \PC_n'$, so suppose $U \ne \0$. Let $m \le n$ be the largest number with $\dom(\PC_m') \cap P \ne 0$. Thus, $U$ is $F_{m+1}$-invariant (hence, a disjoint union of $F_{m+1}$-classes), so \cref{average_at_the_border} implies
    \begin{equation}\label{U_is_way_below_0}
        \meanrf{U} \le -2 \lambda_{m+1}
    \end{equation}
    Next, note that $V \defeq (\bigcup_{M = m}^n D_M \cap P) \setminus U$ is a disjoint union of tiles from $\bigvee_{M=m}^n \PC_M$, so
    \begin{equation}\label{V_is_not_too_above_0}
        \meanrf{V} \le \lambda_m.
    \end{equation}
    Lastly, put $W \defeq P \setminus (U \cup V)$. Note that because $P \in \PC_{n+1}$, we have $\Xmod[P]{F_{n+1}} \in \SCmod{F_{n+1}}(\lambda_{n+1}, L_{n+1})$, and hence also $\Xmod[P]{F_m} \in \SCmod{F_m}(\lambda_m, L_m)$, so the $p_m$-packedness of $\submod{(\PC_m)}{F_m}$ within $\SCmod{F_m}(\lambda_m, L_m)$ implies that $P$ is not a $p_m$-pack over $\PC_m$. Thus, because $D_m \cap P \subseteq U \cup V$, we have
    \begin{equation}\label{eq:W_is_small}
    \coc(W) 
    <
    p_m \cdot (\coc(U) + \coc(V)) 
    = 
    \frac{\lambda_{m+2}}{\Linf{f} + \lambda_{m+1}} \cdot (\coc(U) + \coc(V)).
    \end{equation}
    Then
	\begin{align*}
	- \la_{n+1} \cdot \big(\coc(U) + \coc(V) + \coc(W)\big)
	&=
	- \la_{n+1} \cdot \coc(P)
	\\
	\eqcomment{by \labelcref{eq:PC_n-conditions}}
	&\le 
	\meanrf{P} \cdot \coc(P)
	\\
	&=
	\meanrf{U} \cdot \coc(U) + \meanrf{V} \cdot \coc(V) + \meanrf{W} \cdot \coc(W)
	\\
	\eqcomment{by \cref{U_is_way_below_0,V_is_not_too_above_0}}
	&\le 
	- 2 \la_{m+1} \cdot \coc(U) + \la_m \cdot \coc(V) + \Linf{f} \cdot \coc(W),
	\end{align*}
	so
	\begin{align*}
		(2\lambda_{m+1} - \la_{n+1}) \cdot \coc(U) 
		&\le 
		(\la_m + \la_{n+1}) \cdot \coc(V) + (\Linf{f} + \la_{n+1}) \cdot \coc(W)
		\\
		\eqcomment{by \labelcref{eq:W_is_small}}
		&\le 
		(\la_m + \la_{n+1}) \cdot \coc(V) + \frac{\Linf{f} + \la_{n+1}}{\Linf{f} + \la_{m+1}} \cdot \la_{m+2} \cdot (\coc(U) + \coc(V))
		\\
		&\le
		(\la_m + \la_{n+1}) \cdot \coc(V) + \la_{m+2} \cdot (\coc(U) + \coc(V))
	\end{align*}
	Thus, $(2\lambda_{m+1} - \la_{n+1} - \lambda_{m+2}) \cdot \coc(U) \le (\la_m + \la_{n+1} + \la_{m+2}) \cdot \coc(V)$. Because $\lambda_{n+1} \le \lambda_{m+1}$, $\la_m = 9 \la_{m+2}$, and $\la_{m+1} = 3 \la_{m+2}$, we finally get:
	\[
	\frac{\coc(V)}{\coc(U)} \ge \frac{2\lambda_{m+1} - \la_{n+1} - \lambda_{m+2}}{\la_m + \la_{n+1} + \la_{m+2}} \ge \frac{\lambda_{m+1} - \lambda_{m+2}}{\la_m + \la_{m+1} + \la_{m+2}} = \frac{(3 - 1)}{(9 + 3 + 1)} = \frac{2}{13}.\qedhere
	\]
\end{pf}

\noindent \cref{packing_over} implies that $(\PC_n' \rest{C})_{n \ge N}$ is a $\frac{2}{13}$-packing sequence (\cref{defn:packed}), thus completing the proof of Main Lemma \namedthmlabelref{core_lemma}.\qed(\namedthmlabelref{core_lemma})

\begin{bibdiv}
	\begin{biblist}
		\bibselect{"\LatexDef/_refs"}
	\end{biblist}
\end{bibdiv}

\end{document}